\newcommand{\klockan}{\the\hours:{\ifnum\minutes<10 0\fi}\the\minutes}
\newcommand{\tid}{\today\ \klockan}
\newcommand{\prtid}{\smash{\raise 10mm \hbox{\LaTeX ed \tid}}}
\renewcommand{\prtid}{}
\def\sectionmark#1{} %\markboth{{\sectnr #1}}{{\sectnr #1}}} %Journal
\def\subsectionmark#1{}
\newcommand{\sectnr}{\ifnum \c@secnumdepth >\z@
                 \thesection.\hskip 1em\relax \fi}
\def\@evenhead{\footnotesize\rm\thepage\hfil\leftmark\hfil\llap{\prtid}}
\def\@oddhead{\footnotesize\rm\rlap{\prtid}\hfil\rightmark\hfil\thepage}
\def\tableofcontents{\section*{Contents} %\@mkboth{Contents}{Contents}} %Journal
 \@starttoc{toc}}
\def\@biblabel#1{#1.}
\let\Thebibliography=\thebibliography
\renewcommand{\thebibliography}[1]{\def\@mkboth##1##2{}\Thebibliography{#1}
\addcontentsline{toc}{section}{References}
\frenchspacing % Maybe not needed
% Deleting extra vertical space
\setlength{\@topsep}{0pt}% Delete if extra space before list
\setlength{\itemsep}{0pt}%
\setlength{\parskip}{0pt plus 2pt}%
}
\def\mdots@{\mathinner.\nonscript\!.%
 \ifx\next,.\else\ifx\next;.\else\ifx\next..\else
 \nonscript\!\mathinner.\fi\fi\fi}
\let\ldots\mdots@
\let\cdots\mdots@
\let\dotso\mdots@
\let\dotsb\mdots@
\let\dotsm\mdots@
\let\dotsc\mdots@
\def\vdots{\vbox{\baselineskip2.8\p@ \lineskiplimit\z@
    \kern6\p@\hbox{.}\hbox{.}\hbox{.}\kern3\p@}}
\def\ddots{\mathinner{\mkern1mu\raise8.6\p@\vbox{\kern7\p@\hbox{.}}%
    \raise5.8\p@\hbox{.}\raise3\p@\hbox{.}\mkern1mu}}
\let\Enumerate=\enumerate
\renewcommand{\enumerate}{\Enumerate%
% Deleting extra vertical space
\setlength{\@topsep}{0pt}% Delete if extra space before list
\setlength{\itemsep}{0pt}%
\setlength{\parskip}{0pt plus 1pt}%
\renewcommand{\theenumi}{\textup{(\alph{enumi})}}%
\renewcommand{\labelenumi}{\theenumi}%
}
\let\endEnumerate=\endenumerate
\renewcommand{\endenumerate}{\endEnumerate\unskip}
\newcounter{saveenumi}
\def\@seccntformat#1{\csname the#1\endcsname.\quad}
\long\def\@makecaption#1#2{%
  \vskip\abovecaptionskip
  \sbox\@tempboxa{ #1. #2}%
  \ifdim \wd\@tempboxa >\hsize
    #1. #2\par
  \else
    \global \@minipagefalse
    \hb@xt@\hsize{\hfil\box\@tempboxa\hfil}%
  \fi
  \vskip\belowcaptionskip}
\newcommand{\authortitle}[2]{\author{#1}\title{#2}\markboth{#1}{#2}}
\newcommand{\art}[6]{{\sc #1, \rm #2, \it #3\/ \bf #4 \rm (#5), \mbox{#6}.}}
\newcommand{\artnopt}[6]{{\sc #1, \rm #2, \it #3\/ \bf #4 \rm (#5), \mbox{#6}}}
\newcommand{\auth}[2]{{#2. #1}}
\def\idxauth{\auth}
\newcommand{\arttoappear}[3]{{\sc #1, \rm #2, to appear in \it #3}}
\newcommand{\artin}[3]{{\sc #1, \rm #2, in #3.}}
\newcommand{\book}[3]{{\sc #1, \it #2, \rm #3.}}
\newcommand{\AND}{{\rm and }}
\newtheoremstyle{descriptive}%
  {\topsep}   %{\medskipamount}          % Space above
  {\topsep}   %  {\medskipamount}          % Space below
  {\rmfamily} % Body font
  {}          % Indent
  {\bfseries} % Head font
  {.}         % Punctuation after thm head
  { }         % Space after thm head
  {}          % Thm head spec(?)
\newtheoremstyle{propositional}%
  {\topsep}   %  {\medskipamount}          % Space above
  {\topsep}   %  {\medskipamount}          % Space below
  {\itshape}  % Body font
  {}          % Indent
  {\bfseries} % Head font
  {.}         % Punctuation after thm head
  { }         % Space after thm head
  {}          % Thm head spec(?)
\theoremstyle{propositional}
\newtheorem{thm}{Theorem}[section]
\newtheorem{theo}[thm]{Theorem}   % Alternative
\newtheorem{prop}[thm]{Proposition}
\newtheorem{lem}[thm]{Lemma}
\newtheorem{cor}[thm]{Corollary}
\newtheorem{lemma}[thm]{Lemma}
\theoremstyle{descriptive}
\newtheorem{definition}[thm]{Definition}
\newtheorem{example}[thm]{Example}
\newtheorem{remark}[thm]{Remark}
\renewenvironment{proof}[1][\proofname]{\par
  \pushQED{\qed}%
  \normalfont
%\topsep6\p@\@plus6\p@\relax % Removed by Anders Bj\"orn
  \trivlist
  \item[\hskip\labelsep
        \itshape
    #1\@addpunct{.}]\ignorespaces
}{%
  \popQED\endtrivlist\@endpefalse
}
\gdef\eeaa#1pt{#1}}      % Get slantfactor
\def\accentadjtext#1{\setbox0\hbox{$#1$}\kern   % Convert it with height
                \expandafter\eeaa\the\fontdimen1\textfont1 \ht0 }
\def\accentadjscript#1{\setbox0\hbox{$#1$}\kern % Convert it with height
                \expandafter\eeaa\the\fontdimen1\scriptfont1 \ht0 }
\def\accentadjscriptscript#1{\setbox0\hbox{$#1$}\kern   % Convert it with height
                \expandafter\eeaa\the\fontdimen1\scriptscriptfont1 \ht0 }
\def\accentadjtextback#1{\setbox0\hbox{$#1$}\kern       % Convert it with height
                -\expandafter\eeaa\the\fontdimen1\textfont1 \ht0 }
\def\accentadjscriptback#1{\setbox0\hbox{$#1$}\kern     % Convert it with height
                -\expandafter\eeaa\the\fontdimen1\scriptfont1 \ht0 }
\def\accentadjscriptscriptback#1{\setbox0\hbox{$#1$}\kern % Convert it with height
                -\expandafter\eeaa\the\fontdimen1\scriptscriptfont1 \ht0 }
\def\itoverline#1{{\mathsurround0pt\mathchoice
        {\rlap{$\accentadjtext{\displaystyle #1}
                \accentadjtext{\vrule height1.593pt}
                \overline{\phantom{\displaystyle #1}
                \accentadjtextback{\displaystyle #1}}$}{#1}}
        {\rlap{$\accentadjtext{\textstyle #1}
                \accentadjtext{\vrule height1.593pt}
                \overline{\phantom{\textstyle #1}
                \accentadjtextback{\textstyle #1}}$}{#1}}
        {\rlap{$\accentadjscript{\scriptstyle #1}
                \accentadjscript{\vrule height1.593pt}
                \overline{\phantom{\scriptstyle #1}
                \accentadjscriptback{\scriptstyle #1}}$}{#1}}
        {\rlap{$\accentadjscriptscript{\scriptscriptstyle #1}
                \accentadjscriptscript{\vrule height1.593pt}
                \overline{\phantom{\scriptscriptstyle #1}
                \accentadjscriptscriptback{\scriptscriptstyle #1}}$}{#1}}}}
\def\itunderline#1{{\mathsurround0pt\mathchoice
        {\rlap{$\underline{\phantom{\displaystyle #1}
                \accentadjtextback{\displaystyle #1}}$}{#1}}
        {\rlap{$\underline{\phantom{\textstyle #1}
                \accentadjtextback{\textstyle #1}}$}{#1}}
        {\rlap{$\underline{\phantom{\scriptstyle #1}
                \accentadjscriptback{\scriptstyle #1}}$}{#1}}
        {\rlap{$\underline{\phantom{\scriptscriptstyle #1}
                \accentadjscriptscriptback{\scriptscriptstyle #1}}$}{#1}}}}
\newcommand{\limplus}{{\mathchoice{\raise.17ex\hbox{$\scriptstyle +$}}
        {\raise.17ex\hbox{$\scriptstyle +$}}
        {\raise.1ex\hbox{$\scriptscriptstyle +$}}
        {\scriptscriptstyle +}}}
\newcommand{\limminus}{{\mathchoice{\raise.17ex\hbox{$\scriptstyle -$}}
        {\raise.17ex\hbox{$\scriptstyle -$}}
        {\raise.1ex\hbox{$\scriptscriptstyle -$}}
        {\scriptscriptstyle -}}}
\newdimen\extrawidth
\def\iintlim#1#2{\setbox0\hbox{$\scriptstyle#1$}%
	\setbox1\hbox{$\scriptstyle#2$}%
	\extrawidth=\wd1 \advance\extrawidth-\wd0
	\ifdim\extrawidth<0pt \extrawidth=0pt\fi%
	\int_{#1\kern\extrawidth \kern .5em}^{#2\kern -\wd1} \kern -.5em%
}
\numberwithin{equation}{section}
\def\cprime{{\mathsurround0pt$'$}}
\newcommand{\imp}{\ensuremath{\mathchoice{\quad \Longrightarrow \quad}{\Rightarrow}
                {\Rightarrow}{\Rightarrow}} }
\newenvironment{ack}{\medskip{\it Acknowledgement.}}{}
\DeclareMathOperator{\diam}{diam}
\DeclareMathOperator{\Div}{div}
\newcommand{\vp}{\varphi}
\renewcommand{\phi}{\varphi}
\newcommand{\eps}{\varepsilon}
\newcommand{\lm}{\lambda}
\newcommand{\al}{\alpha}
\newcommand{\de}{\delta}
\newcommand{\R}{\mathbf{R}}
\newcommand{\Rn}{\mathbf{R}^n}
\newcommand{\Rno}{\mathbf{R}^{n+1}}
\newcommand{\parts}[2]{\frac{\partial {#1}}{\partial {#2}}}
\newcommand{\abs}[1]{\left| #1 \right|}
\newcommand{\defiref}[1]{Definition~\ref{#1}}
\newcommand{\Om}{\Omega}
\newcommand{\p}{{$p\mspace{1mu}$}}
\newcommand{\uP}{\itoverline{H}}
\newcommand{\uPa}{{\itoverline{H}\mspace{1mu}}^a}
\newcommand{\lP}{\itunderline{H}}
\newcommand{\uPind}[1]{\itoverline{H}_{#1}}
\newcommand{\UU}{\mathcal{U}}%
\newcommand{\LL}{\mathcal{L}}%
\newcommand{\bdy}{\partial}
\newcommand{\bdry}{\partial}
\newcommand{\bdyp}{\bdy_p}
\newcommand{\grad}{\nabla}
\newcommand{\setm}{\setminus}
\renewcommand{\emptyset}{\varnothing}
\newcommand{\alp}{\alpha}
\newcommand{\ga}{\gamma}
\newcommand{\wt}{\widetilde{w}}
\newcommand{\Kt}{\widetilde{K}}
\newcommand{\Gt}{\widetilde{G}}
\newcommand{\Wp}{W^{1,p}}
\newcommand{\Thetam}{\Theta_\limminus}
\newcommand{\efoft}{\biggl(\frac{|{\log(-t)}|^{p-2}-1}{p-2}\biggr)}
\newcommand{\efoftsing}{\biggl(\frac{|{\log(-t)}|^{2-p}-1}{2-p}\biggr)}
\begin{document}
%
% \authortitle sets headlines and also \author and \title, but
% the latter can be changed afterwards if needed for page 1
%
\authortitle{Anders Bj\"orn, Jana Bj\"orn, Ugo Gianazza
    and Mikko Parviainen}
{Boundary regularity for degenerate and singular parabolic equations}
\author{
Anders Bj\"orn \\
\it\small Department of Mathematics, Link\"opings universitet, \\
\it\small SE-581 83 Link\"oping, Sweden\/{\rm ;}
\it \small anders.bjorn@liu.se
\\
\\
Jana Bj\"orn \\
\it\small Department of Mathematics, Link\"opings universitet, \\
\it\small SE-581 83 Link\"oping, Sweden\/{\rm ;}
\it \small jana.bjorn@liu.se
\\
\\
Ugo Gianazza \\
\it\small Department of Mathematics ``F. Casorati'', Universit\`a di Pavia,\\
\it\small via Ferrata 1, IT-27100 Pavia, Italy\/{\rm ;}
\it\small gianazza@imati.cnr.it
\\
\\
Mikko Parviainen \\
\it\small Department of Mathematics and Statistics, University of Jyv\"askyl\"a,\\
\it\small P.O. Box 35\/  \textup{(}MaD\/\textup{)}, 
  FI-40014 Jyv\"askyl\"a, Finland\/{\rm ;}
\it\small mikko.j.parviainen@jyu.fi
}

\date{}
\maketitle

\noindent{\small
{\bf Abstract}.
We characterise regular boundary points of the parabolic \p-Laplacian
in terms of a family of barriers, both when $p>2$ and $1<p<2$. Due to the fact that
$p\not=2$, it turns out that one can multiply the \p-Laplace operator by a positive constant,
without affecting the regularity of a boundary point. By constructing suitable families 
of barriers, we give some simple geometric conditions that ensure the regularity of 
boundary points.
}

\bigskip
\noindent
{\small \emph{Key words and phrases}:
barrier family, cone condition, degenerate parabolic,
exterior ball condition,
\p-Laplacian, Perron's method, Petrovski\u\i\   criterion,
regular boundary point,
singular parabolic.
}

\medskip
\noindent
{\small 2010 Mathematics Subject Classification:
Primary: 35K20;  
Secondary: 31B25, 35B65, 35K65, 35K67, 35K92.
}

\section{Introduction}

A boundary point  is \emph{regular} {with respect} to a partial differential equation
if all solutions to the Dirichlet problem attain their continuous boundary values continuously
at that point.
The characterisation of regular boundary points for different partial differential equations has a very long history. Wiener gave a necessary and sufficient condition,
the \emph{Wiener criterion}, 
for the boundary regularity in the context of  the Laplace equation in his celebrated 1924 paper \cite{wiener}.
  Evans and Gariepy~\cite{evansg82} settled the question about the boundary regularity for the heat equation, but the boundary regularity for \p-parabolic type equations in terms of an explicit Wiener type criterion is a long standing open problem. 
   In this paper, we characterise regular boundary points for the \p-parabolic equation, $1<p<\infty$, 
\[
\begin{split}
\parts{u}{t}=\Div(\abs{\nabla u}^{p-2} \nabla u)
\end{split}
\]
in terms of a family of barrier functions (Theorem~\ref{thm:barrier-char}). 

The parabolic boundary regularity is a quite delicate question, and this is already apparent,
when dealing with the one-dimensional linear heat equation. Indeed, 
a boundary point can be regular for the equation
\[
  \frac{\partial u}{\partial t} = \frac{\partial^2 u}{\partial x^2},
\]
but irregular for its (still linear) cousin
\[
2\frac{\partial u}{\partial t} = \frac{\partial^2 u}{\partial x^2}.
\] 
This can be seen using
Petrovski\u\i's   criterion for the one-dimensional heat equation,
see \cite{Petro1,Petro2}. The interested reader can refer to the historical introduction presented
by Galaktionov~\cite{galaktionov}, in particular for the improvements with respect to the original result of \cite{Petro1}, that Petrovski\u\i\  proved in \cite{Petro2}.

Such a behaviour might raise a serious doubt on all the attempts for proving the 
regularity of boundary points for parabolic problems, let alone a parabolic Wiener's criterion, 
based on estimates with unspecified constants. 
However, we show in Theorem~\ref{thm:multiplied-eq}, perhaps surprisingly, 
that in the nonlinear case, the regularity of a boundary point is not affected by 
multiplicative constants on one side of the equation.  
We also derive several concrete characterisations of
the regularity of boundary points: by constructing an explicit family of barriers, we show that the exterior ball condition and several exterior cone conditions imply the regularity of a boundary point. 
Moreover, the barrier family characterisation implies that the regularity of a boundary point is of local nature and that the future does not affect the regularity of a point. 
Finally, we establish a Petrovski\u\i\  type regularity 
condition for the latest boundary point. 

Perron's method \cite{perron23}, also called the
Perron--Wiener--Brelot method, originally developed for  the Laplace equation, has become a fundamental tool in 
the study of Dirichlet boundary value problems for various  
elliptic and parabolic 
partial differential equations. 
The idea is to construct an upper  solution of the Dirichlet problem as an 
infimum of a certain upper class of supersolutions. A lower
solution is constructed similarly using a lower class of subsolutions,
and when the upper and lower solutions coincide we obtain a reasonable 
solution.
 In the Perron method, the boundary regularity is essentially a separate problem from 
the existence of a solution.  

Systematic use of barrier functions as a tool for studying boundary regularity 
seems to date back to the 1912 work of Lebesgue~\cite{lebesgue1912}. Later, Lebesgue~\cite{Lebesgue}
characterised regular boundary points in terms of barriers
for the linear Laplace equation.
In the elliptic setting, the extension of Perron's method and the method of barriers to the nonlinear \p-Laplacian
was initiated by Granlund, Lindqvist and Martio in \cite{GLM86}
and developed in a series of papers (see, for example, the accounts given in 
Heinonen--Kilpel\"ainen--Martio~\cite{HeKiMa} and  Bj\"orn--Bj\"orn~\cite{BBbook}).

Coming to the heat equation, in much of the existing literature, the subject has been addressed as an analog to the classical theory for the Laplace equation 
(see e.g.\ Doob~\cite{doob}) or as an example of abstract potential theory 
(see e.g.\ Bliedtner--Hansen~\cite{bliedtnerhansen}). 
However, the recent 
book by Watson \cite{watson} deals with heat potential theory as a subject on its own. 

The potential theory for \p-\emph{parabolic} type equations was initiated by Kilpel\"ainen and Lindqvist in \cite{KiLi96}. They established the parabolic Perron method, and also suggest a boundary regularity condition in terms of one barrier function. Even if the single barrier criterion has turned out to be 
problematic,\footnote{We thank P.\ Lindqvist for bringing this problem to our attention.} our paper owes a lot of inspiration and techniques to \cite{KiLi96}, as well as \cite{lindqvist95}.

The paper is  organised as follows: Section~\ref{S:Prelim}
is devoted to some preliminary material. In particular, we recall the different concepts of solutions -- weak supersolutions and superparabolic functions -- as well as the Perron method.
Section~\ref{S:Boundary} deals with the boundary regularity, the definition of a family of barriers
and the characterisation of a regular boundary point in terms of barriers, with some related properties.
 
The following sections are devoted to simple geometric criteria for the regularity of boundary points, which are all derived by constructing suitable families of barriers. In particular, the exterior ball condition of Section~\ref{sect-ball-all-p} states that if there is an exterior ball touching the domain at a point, then this point is a regular boundary point with two exceptions:
Consider the Dirichlet problem in a space time cylinder $\Om\times(0,T)$. The boundary values are given on the lateral boundary $\partial \Om\times(0,T)$ as well as on the initial boundary $\Om\times\{0\}$. However, the solution itself determines the values on $\Om\times \{T\}$. Thus, evidently, the point of contact should not be the south pole of the exterior ball. However, curiously 
also the `north pole' as a point of contact causes difficulties when using the natural family of barriers. By different means, one easily sees that for example $(x,t)\in \Om\times \{0\}$ is a regular point in the cylindrical case. 
However, if the initial boundary is for example  a half sphere (like at least roughly in a soda can), then it would be interesting to know if the north pole of the initial boundary is regular.

Section~\ref{S:cone} deals with cone conditions, 
while Section~\ref{S:petr} deals with
a Petrovski\u\i\ type condition for $p>2$.
Petrovski\u\i~\cite{Petro2} showed that $(0,0)$ is regular for the heat equation
with respect to 
\[
 \{(x,t): |x|<2(1+\eps)\sqrt{-t}\sqrt{\log |{\log(-t)}|} \text{ and } -1\le t<0\},
\]
if $\eps=0$, while it is
irregular if $\eps >0$.
In Section~\ref{S:petr} we obtain a similar result 
for $p>2$.
We have not been able to obtain a 
Petrovski\u\i\ type condition for $1<p<2$,
but in Section~\ref{S:final} we deduce a somewhat weaker result.
We end the paper by giving  
a list of some
open problems in Section~\ref{S:Open}.

\begin{ack}
A.\ B.\ and J.\ B.\ are supported by the Swedish Research Council, and M.\ P.\ by the Academy of Finland.
Part of this research was done during several visits:
of M.\ P.\ to Link\"opings universitet in 2007,
of A.\ B.\ to Universit\`a di Pavia in 2011,
of U.\ G.\ to University of Jyv\"askyl\"a in 2012,
and while all authors visited Institut Mittag-Leffler in 2013.
\end{ack}

%%%%%%%%%%%%%%%%%%%%%%%%%%%%%%%%%%%%
\section{Preliminaries}\label{S:Prelim}

Let $\Theta$ be a bounded nonempty open set in $\R^{n+1}$, $1<p<\infty$, and
$z=(x,t)\in \R^{n+1}$.
We consider the equation
\begin{equation} \label{eq:para}
\parts{u}{t}=\Delta_p u:=\Div(\abs{\nabla u}^{p-2} \nabla u),
\end{equation}
where the gradient $\grad u$ and the \emph{\p-Laplacian} $\Delta_p$
are taken with respect to $x$. 
This equation is \emph{degenerate} if $p>2$ and \emph{singular} if $1<p<2$.
For $p=2$ it is the usual heat equation.

Observe that if $u$ satisfies \eqref{eq:para},
and $a \in \R$, then $-u$ and $u+a$ also satisfy
\eqref{eq:para}, but (in general) $au$ does not.

In what follows, unless otherwise stated, $Q$ stands for a box
$Q=(a_1,b_1)\times\ldots\times(a_n,b_n)$
in $\Rn$, and the sets
$Q_T=Q\times (0,T)$ and $ Q_{t_1,t_2}=Q\times(t_1,t_2)$
are called \emph{space-time boxes}. Further,
$B(\xi_0,r)=\{z \in \R^{n+1} : |z-\xi_0| <r\}$
stands for the usual Euclidean ball in $\Rno$.

 Let $U$ be an
open set in $\Rn$.  The \emph{parabolic boundary} of a cylinder
$U_{t_1,t_2}:=U\times (t_1,t_2)\subset \R^{n+1}$ is
\[
\partial_p U_{t_1,t_2}=(\overline U\times \{t_1\})\cup(\partial U\times (t_1,t_2]).
\]
We define the \emph{parabolic boundary} of a finite union of open
cylinders $U^i_{t_1^i,t_2^i}$ as follows
\[
\partial_p \biggl(\bigcup_i  U^i_{t_1^i,t_2^i}\biggr) : =
\biggl(\bigcup_i \partial_p U^i_{t_1^i,t_2^i}\biggr) \setminus
\bigcup_i {U}^i_{t_1^i,t_2^i}.
\]
If $D'$ is a bounded open subset of
$D$ and the closure of $D'$ belongs to $D$, we 
write $D' \Subset
D$.
Note that the parabolic boundary is by definition compact.

Let $U$ be a bounded open set in $\Rn$.  As usual, $W^{1,p}(U)$
denotes the space of real-valued functions $f$ such that $f \in
L^p(U)$ and the distributional first partial derivatives $\partial
f /\partial x_i$, $i=1,2,\dots,n$, exist in $U$ and belong to
$L^p(U)$. We use the norm
\[
\|f\|_{W^{1,p}(U)} =\biggl(\int_U|f|^p\,dx + \int_U|\nabla f|^p\,dx\biggr)^{1/p}.
\]
The Sobolev space with zero boundary values, $W_0^{1,p}(U)$, is
the closure of $C_0^\infty(U)$ with respect to the Sobolev norm.

By the \emph{parabolic Sobolev space} $L^p(t_1,t_2;W^{1,p}(U))$,
with $t_1<t_2$, we mean the space of functions $u(x,t)$ such that the mapping $x
\mapsto u(x,t)$ belongs to $W^{1,p}(U)$ for almost every $t_1 < t <
t_2$ and the norm
\[
\biggl(\iintlim{t_1}{t_2}\int_U |u(x,t)|^{p} + |\nabla
u(x,t)|^{p}\,dx\,dt\biggr)^{1/p}
\]
is finite. The definition of the space
$L^p(t_1,t_2;W_{0}^{1,p}(U))$ is similar.
Analogously by the space $C(t_1,t_2;L^p(U))$,
with $t_1<t_2$, we mean the space of functions $u(x,t)$, such that the mapping
$t\mapsto\int_U|u(x,t)|^p\,dx$ is continuous in the time interval $[t_1,t_2]$.
(The gradient $\nabla$ and divergence $\Div$ are always taken with
respect to the $x$-variables in this paper.)
We can now introduce the notion of weak solution.

\begin{definition}
A function $u:\Theta \to [-\infty,\infty]$ is a  
\emph{weak solution} to equation \eqref{eq:para} 
if whenever $U_{t_1,t_2} \Subset \Theta$ is an open cylinder, 
we have 
$u \in C(t_1,t_2;L^2(U))\cap L^{p}(t_1,t_2;W^{1,p}(U))$, and  
 $u$ satisfies the integral equality
\[ 
\iintlim{t_1}{t_2}\int_{U} \abs{ \nabla u}^{p-2} \nabla u \cdot
\nabla\phi \, dx\,dt - \iintlim{t_1}{t_2}\int_{U} u
\frac{\partial\phi}{\partial t} \, dx\,dt   =  0
\quad \text{for all }\phi \in C_0^\infty(U_{t_1,t_2}).
\] 
Continuous weak
solutions are called  \emph{\p-parabolic functions}.

A function $u$
is a  \emph{weak supersolution} (\emph{subsolution}),
if whenever $U_{t_1,t_2} \Subset
\Theta$ we have $u \in
L^{p}(t_1,t_2;W^{1,p}(U))$, and the 
left-hand side 
above is nonnegative (nonpositive) for all
nonnegative $\phi \in C_0^\infty(U_{t_1,t_2})$.
\end{definition}

In the following, for simplicity, we will often 
omit \emph{weak}, when talking of weak (super)solutions.

Locally bounded solutions are locally H\"older continuous (see 
DiBenedetto~\cite{dibe-mono}, Chapters~III and IV). 
For $p>\frac{2n}{n+2}$
the notion of solution automatically yields 
local boundedness, whereas for $1<p\le\frac{2n}{n+2}$ explicit unbounded solutions are known, and in order to guarantee  
boundedness, an extra assumption on $u$ is needed 
(see the discussions in \cite{dibe-mono}, Chapter~V,
and DiBenedetto--Gianazza--Vespri~\cite{DBGV-mono}, Appendix~A). 
Although it plays no role in the following, it is worth mentioning that nonnegative solutions satisfy proper forms of Harnack inequalities (see \cite{DBGV-mono}, and also Kuusi~\cite{Kuusi08}), and locally bounded gradients of solutions are locally H\"older continuous (see \cite{dibe-mono}, Chapter~IX). 

\medskip

The so-called Barenblatt solution \cite{Bare52} to equation \eqref{eq:para}
\[
\mathcal B_p(x,t)=t^{-n/\lambda}\biggl(C-\frac{p-2}{p}\lambda^{1/(1-p)}
\biggl(\frac{|x|}{t^{1/\lambda}}\biggr)^{p/(p-1)}\biggr)_\limplus^{(p-1)/(p-2)},\quad\lambda=n(p-2)+p,
\]
is used in this paper.
Even though it was introduced in the context of degenerate equations for $p>2$,
it is well defined also for $p<2$, provided that $\lambda>0$,
which requires $\frac{2n}{n+1}<p<2$.

\begin{definition}\label{def:superparabolic}
A function $u:\Theta\rightarrow (-\infty,\infty]$
is \emph{\p-super\-parabolic} if
\begin{enumerate}
\renewcommand{\theenumi}{\textup{(\roman{enumi})}}%
\item $u$ is lower semicontinuous;
\item $u$ is finite in a dense subset of ${\Theta}$;
\item $u$ satisfies the following comparison principle on each space-time
  box $Q_{t_1,t_2}\Subset{\Theta}$: If $h$ is \p-parabolic in
  $Q_{t_1,t_2}$ and continuous on $\overline{Q}_{t_1,t_2}$, and if
  $h\leq u$ on $\partial_p Q_{t_1,t_2}$, then $h\leq u$ in the whole $Q_{t_1,t_2}$.
\end{enumerate}
\end{definition}
\noindent A \p-subparabolic function $v:\Theta\rightarrow [-\infty,\infty)$ 
is defined analogously,
except that $v$ 
is upper semicontinuous and the inequalities are reversed, i.e.\
 we require that if  $h\ge v$ on $\partial_p Q_{t_1,t_2}$, then $h\ge v$ in the whole $Q_{t_1,t_2}$.
Equivalently, $v$ is \p-subparabolic if $-v$ is \p-superparabolic.

It is also worth mentioning that the definition of \p-superparabolic functions given 
here is equivalent to
the modern definition of viscosity supersolutions as shown 
in Juutinen--Lindqvist--Manfredi~\cite{juutinenlm01}.

\begin{thm}\label{thm-cont-exist}
Consider a parabolic cylinder $G_T=G\times(0,T)$, where $G$ is a bounded domain in $\Rn$ with a Lipschitz boundary $\partial G$, and let 
$h\in C(\partial_p G_T)$. 
Then there is a unique \p-parabolic function 
$u \in C(\itoverline{G}_T)$ 
that is continuous in $G_T$ and takes the boundary values $u=h$ on the parabolic boundary 
$\partial_p G_T$. Moreover, if $h$ belongs to 
$C(t_1,t_2;L^2(G))\cap L^{p}(t_1,t_2;W^{1,p}(G))$, then so does $u$.
\end{thm}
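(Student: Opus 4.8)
The plan is to separate uniqueness from existence. For uniqueness I would use the comparison principle: a \p-parabolic function is at once a weak super- and subsolution, hence both \p-superparabolic and \p-subparabolic, so if $u_1,u_2\in C(\overline{G}_T)$ both solve the problem with data $h$ on $\partial_p G_T$, comparison on $G_T$ gives $u_1\le u_2$ and $u_2\le u_1$, whence $u_1=u_2$. The comparison principle itself I would derive from the monotonicity of $\xi\mapsto|\xi|^{p-2}\xi$ in the weak formulation, or, via Definition~\ref{def:superparabolic}(iii), by exhausting $G_T$ by space-time boxes.

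For existence I would first reduce to smooth boundary data. Extend $h$ to a function in $C(\overline{G}_T)$ by Tietze and mollify to obtain $h_j\in C^\infty(\overline{G}_T)$ with $h_j\to h$ uniformly on $\overline{G}_T$, hence on $\partial_p G_T$. For each smooth datum the initial--boundary value problem $\partial_t u=\Delta_p u$ in $G_T$ with $u=h_j$ on the lateral boundary and $u(\cdot,0)=h_j(\cdot,0)$ has a weak solution $u_j$ in the energy class $C(0,T;L^2(G))\cap L^p(0,T;W^{1,p}(G))$ with $u_j-h_j\in L^p(0,T;W_0^{1,p}(G))$; this is classical via Galerkin approximation and the theory of monotone (pseudomonotone) evolution operators. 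By DiBenedetto's interior Hölder estimates~\cite{dibe-mono} the $u_j$ are continuous in $G_T$, so they are genuinely \p-parabolic.

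The hard part will be continuity up to the parabolic boundary. Continuity up to the initial face $\overline{G}\times\{0\}$ I would obtain from the initial condition together with standard parabolic boundary estimates, and continuity up to the lateral boundary $\partial G\times(0,T]$ from the Lipschitz structure: every lateral boundary point satisfies an exterior cone condition, so one can construct local barriers (or invoke boundary Hölder continuity for cylinders over Lipschitz domains, again from \cite{dibe-mono}) to force continuous attainment of $h_j$ there; the space--time corner $\partial G\times\{0\}$ requires combining both. This regularity input is where the Lipschitz hypothesis is essential and is the main technical obstacle.

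Finally I would pass to the limit. Since $u_j+a$ solves the equation for every constant $a$, the comparison principle yields $\sup_{G_T}|u_j-u_k|\le\sup_{\partial_p G_T}|h_j-h_k|$, so $\{u_j\}$ is uniformly Cauchy in $C(\overline{G}_T)$ and its uniform limit $u$ is continuous on $\overline{G}_T$ with $u=h$ on $\partial_p G_T$. That $u$ is again \p-parabolic follows from stability of weak solutions under uniform (indeed local) convergence, obtained from Caccioppoli energy estimates and compactness. The same Caccioppoli estimate, tested with $u_j-h_j$ and bounded uniformly in $j$ by the norm of $h_j$, and hence of $h$, in $C(0,T;L^2(G))\cap L^p(0,T;W^{1,p}(G))$, gives the final ``moreover'' assertion after letting $j\to\infty$.
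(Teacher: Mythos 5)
First, a structural point: the paper does \emph{not} prove Theorem~\ref{thm-cont-exist} at all. It is quoted from the literature, with the explicit remark that a complete proof covering the whole range $1<p<\infty$ is, to the authors' knowledge, given only in Ivert~\cite{ivert}, Theorem~3.2, which in turn relies on Fontes~\cite{Fontes}. So your proposal cannot be matched against an internal argument; it can only be judged as an outline of the standard architecture, and at that level the skeleton is right: uniqueness by comparison derived from monotonicity of $\xi\mapsto|\xi|^{p-2}\xi$ (importantly \emph{not} from the paper's parabolic comparison principle, Theorem~\ref{thm:sekareuna}, whose proof uses Theorem~\ref{thm-cont-exist} itself --- you avoid this circularity correctly); Galerkin/monotone-operator existence for mollified data; interior regularity from~\cite{dibe-mono}; the uniform Cauchy bound $\sup_{G_T}|u_j-u_k|\le\sup_{\partial_p G_T}|h_j-h_k|$, valid because constants may be added to solutions; and stability of weak solutions under uniform convergence.

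The genuine gap is exactly the step you yourself call ``the main technical obstacle'', and it is a mathematical gap, not a presentational one. Your first route --- ``construct local barriers'' at lateral boundary points from the exterior cone provided by the Lipschitz condition --- collides with the central theme of this very paper: for $p\ne2$ the equation is not homogeneous, so a barrier cannot be multiplied by a constant to reach a prescribed height. To force attainment of $h_j$ at a lateral point you need, for every shrinking neighbourhood, a supersolution vanishing at the point and exceeding a fixed multiple of $\|h_j\|_\infty$ outside that neighbourhood; this is precisely the demand of condition~\ref{cond-second-weak} of Definition~\ref{def-barrier}, i.e.\ a barrier \emph{family}, and constructing such families at Lipschitz boundary points is essentially the content of Theorem~6.5 of~\cite{KiLi96} (Theorem~\ref{thm-ell-reg-iff-par-reg} here), a substantial construction rather than a routine invocation. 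Moreover, the natural localisation to $G_T\cap B(z_0,\delta)$ requires comparison on non-cylindrical sets, which is again Theorem~\ref{thm:sekareuna} and hence circular, so you must localise to sub-cylinders and argue with the weak-formulation comparison only. Your second route --- boundary estimates from~\cite{dibe-mono} --- is the more promising one, since those are a priori estimates proved by iteration rather than by barriers, but you would still need to verify that they cover the full range $1<p<\infty$, merely continuous (not H\"older) data, and the corner $\partial G\times\{0\}$; the paper's remark that a full proof exists only in~\cite{ivert} is a warning that assembling these pieces is where the real work lies. Finally, the ``moreover'' assertion needs approximations $h_j$ converging to $h$ \emph{simultaneously} uniformly on $\partial_p G_T$ and in $C(0,T;L^2(G))\cap L^p(0,T;W^{1,p}(G))$: mollifying a Tietze extension gives the former, mollifying the energy-class representative gives the latter, and you have not explained how a single sequence achieves both, without which the closing Caccioppoli limit argument does not go through.
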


We will need this result explicitly
when proving the 
parabolic comparison principle in
Theorem~\ref{thm:sekareuna},
but we also rely on it implicitly since it is used (at least for
boxes) when obtaining some results we quote from, e.g.,
Kilpel\"ainen--Lindqvist~\cite{KiLi96}.

Although it is well-known in the literature,
to our knowledge a full proof which
covers the whole range $1<p<\infty$ is given only in
Ivert~\cite{ivert}, Theorem~3.2, which in turn relies on Fontes~\cite{Fontes}.

In the proof of the `pasting lemma' (Lemma~\ref{lem-pasting}), we need the parabolic comparison principle which is essentially stated in \cite{KiLi96}, Lemma~4.3, and \cite{KoKuPa10}, Theorem~4.8. However, as a full proof seems to be difficult to find in the literature, we have chosen to write it down in details. In particular, there is a subtle technical issue when proceeding in time: At a first approach, one might be tempted to consider the Euclidean boundary of 
the whole finite union of space-time boxes at once, construct a suitable \p-parabolic comparison function $h$, and then compare $v$ as well as $u$ with $h$.
However, such a comparison function, \emph{continuous} on the whole  
Euclidean closure does not necessarily exist. Consider for example 
\[
\Xi = ((0,3)\times(0,1)) \cup ((1,2)\times(0,2)) \subset\R^{1+1},
\]
with the boundary data $\psi=0$ on $\bdry_p((0,3)\times(0,1))$ and $\psi=1$
on $\{1,2\}\times[1,2]$.
The function $\psi$ is continuous on $\bdry_p\Xi$ 
(and easily extends to a continuous function on the whole of $\bdry\Xi$)
but any \p-parabolic function
in $(0,3)\times(0,1)$ with $\psi$ as boundary values must be identically zero in 
$(0,3)\times(0,1)$, and thus cannot continuously attain the boundary value 1
at the corner points $(1,1)$ and $(2,1)$.

\begin{theo} \label{thm:sekareuna}
\textup{(Parabolic comparison principle)}
Let $\Theta$ be an open bounded set in $\R^{n+1}$.
Suppose that $u$ is \p-super\-parabolic and $v$ is \p-sub\-parabolic
in $\Theta$.
Let $T \in \R$ 
and assume that
 \begin{equation}  \label{eqn:liminf_vertailu}
  \infty \ne    \limsup_{\Theta \ni (y,s)\rightarrow (x,t)} v(y,s)\leq
   \liminf_{\Theta \ni (y,s)\rightarrow (x,t)} u(y,s) \ne -\infty
 \end{equation}
for all
$(x,t) \in\{(x,t) \in \partial\Theta : t< T\}$.
Then $v\leq u$ in $\{(x,t) \in \Theta : t<T\}$.
\end{theo}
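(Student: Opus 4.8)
The plan is to reduce the global comparison to a single assertion on one space-time box, then propagate that assertion forward in time.

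First I would dispose of the infinite values by truncation. Since a \p-superparabolic function takes values in $(-\infty,\infty]$ and a \p-subparabolic one in $[-\infty,\infty)$, the functions $u_N=\min\{u,N\}$ and $v_N=\max\{v,-N\}$ are finite everywhere, and they are again \p-superparabolic and \p-subparabolic: the constants $\pm N$ are \p-parabolic, and the comparison property in Definition~\ref{def:superparabolic} is inherited by a pointwise minimum (if $h\le u$ and $h\le N$ on $\partial_p Q_{t_1,t_2}$ then $h\le u$ and $h\le N$ inside, hence $h\le\min\{u,N\}$). Because $\liminf u$ and $\limsup v$ are finite on $\{(x,t)\in\partial\Theta:t<T\}$, the inequality \eqref{eqn:liminf_vertailu} survives for $v_N,u_N$ once $N$ is large, so it suffices to treat $v_N\le u_N$ and let $N\to\infty$. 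After this reduction $w:=v-u$ is upper semicontinuous (as $-u$ is upper semicontinuous), bounded on compact sets, and the hypothesis gives $\limsup_{(y,s)\to(x,t)}w\le0$ at every $(x,t)\in\partial\Theta$ with $t<T$.

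The engine is a \emph{single-box claim}: if $Q_{s_1,s_2}\Subset\Theta$ and $v\le u$ on $\partial_p Q_{s_1,s_2}$, then $v\le u$ in $Q_{s_1,s_2}$. To prove it I would insert a continuous barrier. On the compact set $\partial_p Q_{s_1,s_2}$ the function $v$ is upper semicontinuous, $u$ is lower semicontinuous, and $v\le u$, so there is a continuous $g$ with $v\le g\le u$ there (an lsc/usc separation, e.g.\ by the Kat\v etov--Tong insertion theorem). A box has Lipschitz boundary, so Theorem~\ref{thm-cont-exist} (after translating in time) provides a \p-parabolic $H\in C(\overline{Q}_{s_1,s_2})$ with $H=g$ on $\partial_p Q_{s_1,s_2}$. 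Then $H\le u$ on $\partial_p Q_{s_1,s_2}$ and $u$ is \p-superparabolic, so $H\le u$ in $Q_{s_1,s_2}$; similarly $H\ge v$ on $\partial_p Q_{s_1,s_2}$ and $v$ is \p-subparabolic, so $H\ge v$ in $Q_{s_1,s_2}$. Hence $v\le H\le u$ inside. The same argument with $u$ replaced by $u+\eps$ shows: if $v\le u+\eps$ on $\partial_p Q_{s_1,s_2}$ then $v\le u+\eps$ in $Q_{s_1,s_2}$.

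Next I would upgrade this to a finite union $\mathcal Q=\bigcup_i Q^i_{t_1^i,t_2^i}$ of boxes. The idea is to process the constituent boxes \emph{in order of increasing time}: each box $Q^i$ is treated only after every box meeting $\partial_p Q^i$ from below or from the side has been treated, so that on $\partial_p Q^i$ either the point lies in $\partial_p\mathcal Q$ (controlled by hypothesis) or it lies in an already-treated open box (where $v\le u+\eps$ is known). Applying the single-box engine repeatedly then yields $v\le u+\eps$ throughout $\mathcal Q$ whenever $v\le u+\eps$ holds on $\partial_p\mathcal Q$. This sequential treatment is essential and is exactly the subtlety flagged after Theorem~\ref{thm-cont-exist}: one cannot shortcut it by a single comparison function continuous on the whole Euclidean closure of $\mathcal Q$, as the configuration $\Xi$ shows that no such \p-parabolic function need exist.

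Finally I would globalise. Fix $\tau<T$ and $\eps>0$. Filling $\Theta\cap\{t<\tau\}$ from inside by a finite union $\mathcal Q$ of boxes $Q^i\Subset\Theta$ whose parabolic boundary $\partial_p\mathcal Q$ lies within a small distance of $\partial\Theta\cap\{t\le\tau\}$, the boundary hypothesis forces $v\le u+\eps$ on $\partial_p\mathcal Q$ once the fill is fine enough (both the lowest faces, near the initial part of $\partial\Theta$, and the lateral faces, near the lateral part of $\partial\Theta$, are thereby controlled). The union engine then gives $v\le u+\eps$ on $\mathcal Q$; exhausting $\Theta\cap\{t<\tau\}$, letting $\eps\to0$, and then $\tau\to T$, yields $v\le u$ on $\{(x,t)\in\Theta:t<T\}$. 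I expect the main obstacle to be precisely this forward-in-time construction: producing, for each $\tau<T$, an exhausting union of interior boxes together with a valid time-ordering of them whose parabolic boundary hugs $\partial\Theta$, so that the only uncontrolled faces are genuine boundary faces. The restriction to $t<T$ is consistent throughout, since the parabolic boundary of a box never contains its top face $Q\times\{s_2\}$, so no information at the future time level is ever required.
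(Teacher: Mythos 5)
Your single-box engine is sound---it is exactly the combination of the usc/lsc insertion, Theorem~\ref{thm-cont-exist} and the comparison clause of Definition~\ref{def:superparabolic} that the paper also uses---but the step that upgrades it to a finite union of boxes has a genuine gap: the time-ordering you postulate does not exist in general. Take $Q^1=(0,2)\times(0,1)$ and $Q^2=(1,3)\times(0,1)$ in $\R^{1+1}$. The lateral face $\{2\}\times(0,1]\subset\partial_p Q^1$ meets the open box $Q^2$, and the lateral face $\{1\}\times(0,1]\subset\partial_p Q^2$ meets $Q^1$, so each box would have to be treated after the other: the dependency relation has a cycle and no admissible order exists. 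Nor can you dodge such configurations: spatially overlapping boxes with overlapping time intervals are forced precisely when you want the interior faces of the covering to be swallowed by the open union (so that $\partial_p\mathcal{Q}$ consists only of genuine boundary faces); if you instead take non-overlapping grid cells, the internal grid faces belong to $\partial_p \mathcal{Q}$ under the paper's definition of the parabolic boundary of a union, and there you have no information. Since Definition~\ref{def:superparabolic} quantifies only over boxes, no soft iteration of the definition can yield comparison on a union of spatially overlapping boxes; genuine PDE input is needed at exactly this point.

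The paper supplies that input as Lemma~\ref{lem-KKP} (from Korte--Kuusi--Parviainen, extended in the paper to $1<p\le\frac{2n}{n+2}$): a comparison principle between a \p-superparabolic function and a continuous \p-parabolic function on a cylinder over a finite union of spatial boxes, proved by variational and obstacle-problem methods rather than by iterating the definition. The paper's globalisation is also different from, and lighter than, your exhaustion scheme: instead of filling $\Theta\cap\{t<\tau\}$ with boxes whose parabolic boundary hugs $\partial\Theta$ (your acknowledged ``main obstacle'', which again forces overlapping boxes and hence the ordering problem), it argues by contradiction at the earliest bad time. With $E=\{(x,t)\in\Theta: t\le T-\eps \text{ and } v>u+\eps\}$ and $T_0=\min\{t:(x,t)\in\overline{E}\}$, one covers $\overline{E}$ by boxes and forms a \emph{single} slab $\Xi=\bigcup_{i=1}^m Q^i_{\sigma,\tau}$ with $\sigma<T_0<\tau$; its bottom is clean because no bad points exist before $T_0$, and its lateral part is clean because all bad points with $t<\tau$ lie inside $\Xi$, not on $\partial_p\Xi$. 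One application of the insertion, Theorem~\ref{thm-cont-exist} and Lemma~\ref{lem-KKP} on this one slab gives $v\le u+\eps$ in $\Xi$, contradicting the definition of $T_0$. A secondary point: your opening truncation is not quite right as stated, since $\limsup v$ and $\liminf u$ are finite at each boundary point with $t<T$ but that set is not compact, so no single $N$ need make \eqref{eqn:liminf_vertailu} hold for $v_N,u_N$ everywhere; this is fixable (work below $T-\eps$ and use semicontinuity on the compact set $\{(x,t)\in\partial\Theta: t\le T-\eps\}$), and in fact unnecessary---the paper's argument never truncates.
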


For the proof we will rely on the following lemma.

\begin{lem} \label{lem-KKP}
Let $U=\bigcup_{i} Q^i$ be a finite union of boxes
and let $U_T:=U \times (0,T)$ be the corresponding cylinder.
Also let $u$ be a \p-superparabolic function in a neighbourhood
of $\overline{U}_T$, and $h \in C(\overline{U}_T)$ be a function
which is \p-parabolic in $U_T$ and such that $h \le u$
on $\bdy_p U_T$.
Then $h \le u$ in $U_T$.
\end{lem}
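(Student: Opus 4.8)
The plan is to bypass the box-by-box decomposition of $U$ altogether and instead run a single energy (Caccioppoli--Gronwall) comparison on the whole cylinder $U_T$, treating $U=\bigcup_i Q^i$ as one spatial domain. The decisive point is that the \emph{only} genuine lateral boundary of $U_T$ is $\partial U\times(0,T]\subset\partial_p U_T$, where $h\le u$ is given; the internal faces $\partial Q^i\cap U$, which obstruct any naive propagation that treats the boxes separately (and which are the source of the difficulty illustrated by the set $\Xi$ above), never enter such an argument.

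First I would reduce to the case when $u$ is bounded. Put $M=\max_{\overline U_T}h<\infty$ and replace $u$ by $u_M:=\min(u,M)$. Since a constant is \p-parabolic, hence \p-superparabolic, and the minimum of two \p-superparabolic functions is again \p-superparabolic (immediate from the comparison in Definition~\ref{def:superparabolic}), $u_M$ is \p-superparabolic; moreover $h\le u_M$ on $\partial_p U_T$ and $u_M\le u$, so it suffices to prove $h\le u_M$. Now $u_M$ is bounded, and it is known (Kilpel\"ainen--Lindqvist~\cite{KiLi96}) that a bounded \p-superparabolic function is a weak supersolution, while $h$ is a weak solution.

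Next I would carry out the energy comparison. The function $\psi:=(h-u_M)_+$ is (after the usual boundary-value technicalities) an admissible test function: it has zero lateral trace because $h\le u_M$ on $\partial U\times(0,T]$, so $\psi(\cdot,t)\in W_0^{1,p}(U)$, and it has zero initial trace because $h\le u_M$ on $\overline U\times\{0\}$. Subtracting the weak formulations for $u_M$ and $h$ and testing with $\psi$, the spatial term is controlled by the monotonicity inequality
\[
\bigl(\abs{\grad h}^{p-2}\grad h-\abs{\grad u_M}^{p-2}\grad u_M\bigr)\cdot\grad(h-u_M)_+\ge 0,
\]
while the time term produces $\tfrac12\tfrac{d}{dt}\int_U(h-u_M)_+^2\,dx$. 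Hence $t\mapsto\int_U(h-u_M)_+^2\,dx$ is nonincreasing and starts at $0$, forcing $(h-u_M)_+\equiv 0$; thus $h\le u_M\le u$ almost everywhere, and then everywhere in $U_T$ by the continuity of $h$ and the lower semicontinuity of $u$.

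The main obstacle is the time regularity required to justify the time term. A weak supersolution has $\partial_t u_M$ only as a signed Radon measure, so the manipulation turning $\int_U\partial_t u_M\,\psi\,dx$ into a derivative of $\tfrac12\int_U\psi^2\,dx$ is not literally valid; one has to introduce Steklov averages in time, test with the averaged $\psi$, and pass to the limit, exploiting that the singular (positive) part of $\partial_t u_M$ carries the favorable sign in the supersolution inequality. The companion difficulty is to make the initial condition rigorous: translating the pointwise hypothesis $h\le u$ on $\overline U\times\{0\}$ into $(h-u_M)_+(\cdot,0)=0$ in $L^2(U)$ requires the $C([0,T];L^2(U))$-type time continuity of bounded supersolutions together with the lower semicontinuity of $u_M$ at $t=0$. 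Once these two technical points are settled, the Gronwall step above closes the argument.
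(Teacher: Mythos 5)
Your reduction to bounded $u$ via $u_M=\min\{u,M\}$ is exactly the device the paper itself uses, and your closing energy comparison is essentially Lemma~\ref{lem-comp} (Lemma~3.1 of \cite{KiLi96}, Lemma~3.5 of \cite{KoKuPa10}), which you could simply have cited instead of re-derived. The genuine gap is the sentence connecting the two: ``it is known (Kilpel\"ainen--Lindqvist~\cite{KiLi96}) that a bounded \p-superparabolic function is a weak supersolution.'' First, this is misattributed: \cite{KiLi96} contains no such theorem (and treats only $p\ge 2$); the relevant results are those of Kinnunen--Lindqvist~\cite{KinLin05}, \cite{KinLin06}, Kuusi~\cite{Kuusi09} and Korte--Kuusi--Parviainen~\cite{KoKuPa10}, which is precisely the ``delicate issue'' the paper flags. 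Second, and more seriously, this implication is a deep theorem --- it is essentially the main result of \cite{KoKuPa10} --- and its proof there runs \emph{through} Lemma~4.1 of \cite{KoKuPa10}, which is the very statement you are asked to prove (the paper's treatment of Lemma~\ref{lem-KKP} consists of quoting that lemma and extending its proof to the subcritical range). A \p-superparabolic function is defined purely by the comparison property of Definition~\ref{def:superparabolic} and carries no a priori Sobolev regularity, so it cannot be inserted into a weak formulation; upgrading it to a weak supersolution is exactly the hard step, and the known route (obstacle problems: approximate $u$ from below by continuous supersolutions, compare those with $h$ via Lemma~\ref{lem-comp}, pass to the limit) relies on comparison lemmas of the present type. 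As the literature stands, invoking the upgrade to prove the comparison lemma is circular.

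There is also a range problem that your argument never addresses. The lemma must hold for all $1<p<\infty$, but the superparabolic-to-supersolution theorem is available in \cite{KoKuPa10} only in the supercritical range $p>\frac{2n}{n+2}$; the whole point of the paper's discussion is the bookkeeping needed to push the comparison down to $1<p\le\frac{2n}{n+2}$, where the $L^2$-integrability used in the energy estimates fails for general functions and is rescued only by working with bounded ones at each step. To repair your proof you must either supply an independent argument that bounded \p-superparabolic functions are weak supersolutions for every $1<p<\infty$ --- a harder task than the lemma itself --- or drop the conversion entirely and argue as in the proof of Lemma~4.1 of \cite{KoKuPa10}: solve obstacle problems whose solutions are continuous supersolutions lying below $u$, compare $h$ with those via Lemma~\ref{lem-comp}, and pass to the limit. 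Your Steklov-average and trace technicalities, which you correctly identify, are then subsumed in that cited comparison lemma.
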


This lemma was stated and proved for the supercritical case 
$p>\frac{2n}{n+2}$ in 
Korte--Kuusi--Parviainen~\cite{KoKuPa10}, Lemma~4.1.
A careful check of the proof reveals that the requirement
$p>\frac{2n}{n+2}$ is used to ensure that
$v-u \in L^2$ in  the proof
of the comparison principle between sub- and supersolutions in
Lemma~3.5 in \cite{KoKuPa10}, which the proof 
of Lemma~4.1 relies on
through the proof 
of Lemma~3.6 in \cite{KoKuPa10}.
To also cover the range $1<p \le \frac{2n}{n+2}$
one can proceed as follows:
First assume that $u$ is bounded (that $h$ above is bounded
is automatic). Then the $L^2$ integrability is immediate for
$u$ and $h$. 
The proof of Lemma~4.1 in \cite{KoKuPa10} is
also relying on an existence result for obstacle problems, which is
Theorem~2.8 therein and whose proof can be found in 
Korte--Kuusi--Siljander~\cite[{Theorem~3.1}]{KoKuSi09} 
or in Lindqvist--Parviainen~\cite[{Theorem~3.2}]{lindqvistp12}.
These existence theorems in turn rely on Lemma~3.5 in \cite{KoKuPa10}
(but only applied with bounded functions
so that the $L^2$ integrability is automatic) and on a convergence result
for supersolutions (Theorem~5.3 in \cite{KoKuPa10}),
whose proof also applies in the subcritical case.
Thus, we have obtained Lemma~\ref{lem-KKP} above with $u$ \emph{bounded} for
all $1<p< \infty$.

Finally, to cover also the unbounded case when $p \le \frac{2n}{n+2}$,
let $m=\sup_{\overline{U}_T} h$ and $v=\min\{u,m\}$.
That $v$ is \p-superparabolic is immediate from the definition.
We can then apply  Lemma~\ref{lem-KKP} with  the bounded
functions $v$ and $h$, which yields that $h \le v$ in $U_T$.
Since $v \le u$, this concludes the proof also for the unbounded case.

\begin{proof}[Proof of Theorem~\ref{thm:sekareuna}]
Let $\eps>0$ and
\[
E= \{(x,t)\in\Theta: t\le T-\eps \text{ and } v(x,t)> u(x,t)+\eps\}.
\]
By \eqref{eqn:liminf_vertailu}, together with
the  compactness of $\{(x,t)\in\bdry\Theta: t\le T-\eps\}$, we conclude that
$\itoverline{E}$
is a compact subset of $\Theta$.
Assume that $E\ne \emptyset$, and
let
\[
T_0=\inf \{t : (x,t) \in E\} = \min \{t : (x,t) \in \itoverline{E}\}.
\]

Since $\itoverline{E}$ is compact, we can
find finitely many space-time boxes
$Q^i_{t^i_1,t^i_2}\Subset\Theta$ such that
$\bigcup_{i=1}^N Q^i_{t^i_1,t^i_2}\supset \itoverline{E}$,
where $t^i_1 \ne T_0$ and  $T_0 < t^i_2<T$, $i=1,2,\ldots, N$.
By changing the cover, we may assume that
$S:=\{t^i_j: j=1,2 \text{ and } i=1,2,\ldots,N\}$ only contains one value, say $\sigma$,  less than $T_0$.
Let $m$ be the number of boxes $Q^i_{t^i_1,t^i_2}$ with $t_1^i=\sigma$, and
assume that these are ordered first.
Let $\Xi:= \bigcup_{i=1}^m Q^i_{\sigma,\tau} \supset \{(x,t)  \in \itoverline{E} : t < \tau\}$,
where $\tau=\min\{t \in S : t > T_0\} > T_0$.

In particular, the parabolic boundary
$\bdry_p\Xi\subset\Theta\setm E$, and hence $v \le u+\eps$ on $\bdry_p\Xi$.
Thus there exists a continuous
function $\psi$ on $\bdry_p \Xi$ such that $v\le\psi\le u+\eps$.
By Theorem~\ref{thm-cont-exist},
we can find
a function $h \in C(\overline{\Xi})$ 
which is \p-parabolic in $\Xi$ 
and continuously attains its boundary values $h=\psi$
on $\bdry_p \Xi$.
Lemma~\ref{lem-KKP} applied in
$\Xi$ to $u+\eps$ and $h$, and to  $-v$ and $-h$, shows that
$v\le h \le u+\eps$ in $\Xi$.

Thus $\Xi \cap E = \emptyset$, and so $T_0 \ge \tau$, a contradiction.
Hence $E$ must be empty, and letting $\eps \to 0$ concludes the proof.
\end{proof}

A direct consequence of Theorem~\ref{thm:sekareuna}
is the following comparison principle,
which can be
considered as a sort of \emph{elliptic} version of the comparison principle,
since it does not acknowledge the presence of the parabolic boundary.
This elliptic comparison principle
is in fact equivalent to the fundamental inequality
$\lP f \le \uP f$ between lower and upper Perron solutions 
(see Definition~\ref{def-Perron} below).

\begin{theo} \label{thm-comp-princ}
\textup{(Elliptic-type comparison principle)}
Suppose that
 $u$ is \p-super\-pa\-ra\-bol\-ic and $v$ is \p-sub\-parabolic in $\Theta$. If
 \[ 
   \infty \ne    \limsup_{\Theta \ni (y,s)\rightarrow (x,t)} v(y,s)\leq
    \liminf_{\Theta \ni (y,s)\rightarrow (x,t)} u(y,s) \ne -\infty
  \] 
for all $(x,t) \in \partial\Theta$,
then $v\leq u$ in $\Theta$.
\end{theo}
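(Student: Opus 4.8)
The plan is to deduce this directly from the parabolic comparison principle (Theorem~\ref{thm:sekareuna}) by exploiting the boundedness of $\Theta$. The only difference between the two statements is that Theorem~\ref{thm:sekareuna} imposes the boundary inequality \eqref{eqn:liminf_vertailu} only at boundary points with $t<T$ and delivers $v\le u$ merely on the sublevel set $\{(x,t)\in\Theta:t<T\}$, whereas here the inequality is assumed at \emph{all} of $\partial\Theta$ and the conclusion is wanted on all of $\Theta$. The key observation is that, since $\Theta$ is bounded, its closure $\overline{\Theta}$ is compact, so the time coordinate $t$ attains a finite maximum on $\overline{\Theta}$, and one can simply push the cut-off time $T$ above the entire temporal extent of $\Theta$.

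Concretely, I would first fix $T\in\R$ with $T>\max_{(x,t)\in\overline{\Theta}}t$, so that $\overline{\Theta}\subset\{(y,s):s<T\}$. For this choice every boundary point $(x,t)\in\partial\Theta$ satisfies $t<T$, and hence $\{(x,t)\in\partial\Theta:t<T\}=\partial\Theta$. Consequently, the hypothesis of Theorem~\ref{thm-comp-princ}, namely that \eqref{eqn:liminf_vertailu} holds for every $(x,t)\in\partial\Theta$, is precisely the hypothesis required by Theorem~\ref{thm:sekareuna} for this particular $T$. Applying Theorem~\ref{thm:sekareuna} then yields $v\le u$ on $\{(x,t)\in\Theta:t<T\}$, and since $\Theta\subset\{(y,s):s<T\}$ this set is all of $\Theta$, giving $v\le u$ in $\Theta$, as desired.

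There is no genuine obstacle here: the substantive content lives entirely in the parabolic comparison principle, and the elliptic-type version is a formal corollary obtained by choosing the cut-off time above the (finite) temporal extent of the bounded set $\Theta$. The only point to verify is that boundedness really does furnish such a $T$, which is immediate from the compactness of $\overline{\Theta}$. The name \emph{elliptic-type} is then justified by the fact that, once $T$ has been sent past the top of $\Theta$, the distinguished role of the parabolic boundary disappears and the comparison is controlled by the behaviour at the full Euclidean boundary $\partial\Theta$.
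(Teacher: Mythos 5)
Your proof is correct and matches the paper's intent exactly: the paper presents Theorem~\ref{thm-comp-princ} as ``a direct consequence of Theorem~\ref{thm:sekareuna}'', and the direct consequence is precisely your argument of choosing $T$ above the finite temporal extent of the bounded set $\Theta$, so that the restricted boundary set and the sublevel set in Theorem~\ref{thm:sekareuna} become all of $\partial\Theta$ and all of $\Theta$ respectively.
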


The connection between \p-superparabolic functions and weak supersolutions 
is a delicate issue, 
see for example Kinnunen--Lindqvist~\cite{KinLin05}, \cite{KinLin06}, 
Kuusi~\cite{Kuusi09} and the survey in Lindqvist~\cite{Lin09}.
However, to conclude that our \emph{continuous} barriers below are \p-superparabolic for all $p >1$,  we only need to check that they are weak supersolutions and then use the following comparison principle for weak (sub/super)solutions, 
see Lemma~3.1 of 
Kilpel\"ainen--Lindqvist~\cite{KiLi96} and
Lemma~3.5 of Korte--Kuusi--Parviainen~\cite{KoKuPa10}.

\begin{lem} \label{lem-comp}
Suppose that $u$ is a weak supersolution and $v$ is 
a weak subsolution to \eqref{eq:para}
in a space-time cylinder $U_{t_1,t_2}$, where $U\subset\Rn$ is an open set. If $u$ and $-v$ are lower semicontinuous on $\overline{U}_{t_1,t_2}$ and $v\le u$ on the parabolic boundary $\partial_p U_{t_1,t_2}$, then $v\le u$ a.e.\ in $U_{t_1,t_2}$.
\end{lem}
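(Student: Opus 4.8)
The plan is to carry out the standard parabolic energy (Gronwall-type) argument on the difference $w=v-u$ and show that its positive part $w_+=\max\{w,0\}$ vanishes a.e. Subtracting the defining inequality of the subsolution $v$ from that of the supersolution $u$, I obtain, for every nonnegative $\phi\in C_0^\infty(U_{t_1,t_2})$,
\[
\int_{t_1}^{t_2}\int_U(\abs{\grad v}^{p-2}\grad v-\abs{\grad u}^{p-2}\grad u)\cdot\grad\phi\,dx\,dt-\int_{t_1}^{t_2}\int_U w\,\parts{\phi}{t}\,dx\,dt\le0.
\]
Formally choosing $\phi=w_+$ would, after integrating the time term by parts, produce the favourable energy contribution $\tfrac12\int_U w_+^2\,dx$ at the time endpoints, while the elliptic term becomes nonnegative: on $\{w>0\}$ one has $\grad w_+=\grad v-\grad u$, so the integrand equals $(\abs{\grad v}^{p-2}\grad v-\abs{\grad u}^{p-2}\grad u)\cdot(\grad v-\grad u)\ge0$ by the monotonicity inequality $(\abs{\xi}^{p-2}\xi-\abs{\eta}^{p-2}\eta)\cdot(\xi-\eta)\ge0$. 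The obstruction is that $w_+$ is neither compactly supported in time nor known to possess a time derivative, since weak super- and subsolutions carry only the spatial regularity $L^p(t_1,t_2;W^{1,p}(U))$.

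To legitimise the time term I would replace $u,v$ by their Steklov averages $u_h(x,t)=\frac1h\int_t^{t+h}u(x,s)\,ds$, $v_h$, and likewise average the fluxes; the averages are absolutely continuous in $t$, so the integration by parts in time becomes rigorous. The weak formulation localises in time to the pointwise inequalities
\[
\int_U\parts{u_h}{t}\,\phi\,dx+\int_U[\abs{\grad u}^{p-2}\grad u]_h\cdot\grad\phi\,dx\ge0,
\]
valid for a.e.\ $t\in(t_1,t_2-h)$ and all nonnegative $\phi\in W_0^{1,p}(U)$, with the reverse inequality for $v_h$. Subtracting them and testing with $\phi=(v_h-u_h)_+=:(w_h)_+$, then integrating over $(t_1,\tau)$ for a fixed $\tau<t_2$, the time term telescopes to $\tfrac12\int_U(w_h)_+^2(x,\tau)\,dx-\tfrac12\int_U(w_h)_+^2(x,t_1)\,dx$. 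Letting $h\to0$, the averaged fluxes converge so that the elliptic part tends to a nonnegative limit by monotonicity, and I arrive at
\[
\tfrac12\int_U w_+^2(x,\tau)\,dx+\int_{t_1}^\tau\int_U(\abs{\grad v}^{p-2}\grad v-\abs{\grad u}^{p-2}\grad u)\cdot\grad w_+\,dx\,dt\le\tfrac12\lim_{h\to0}\int_U(w_h)_+^2(x,t_1)\,dx.
\]
If the right-hand side vanishes, then both nonnegative terms on the left are zero, so $\int_U w_+^2(x,\tau)\,dx=0$ for a.e.\ $\tau$, i.e.\ $v\le u$ a.e.\ in $U_{t_1,t_2}$.

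Two points require the hypotheses of the lemma, and this is where I expect the real work to lie. First, the admissibility of $(w_h)_+$ as a test function in $W_0^{1,p}(U)$: this follows from the lateral bound $v\le u$ on $\partial U\times(t_1,t_2]$, which, combined with the lower semicontinuity of $u$ and of $-v$ on $\overline U_{t_1,t_2}$ (the latter also yielding one-sided bounds that secure the integrability of $w_+$), forces $w_+$ to vanish on the lateral boundary in the trace sense, so that its positive part has zero boundary values. Second, and more delicate, is showing that the initial Steklov contribution $\int_U(w_h)_+^2(x,t_1)\,dx$ tends to $0$ as $h\to0$; here the initial bound $v\le u$ on $\overline U\times\{t_1\}$ together with the same semicontinuity ensures that averaging over $[t_1,t_1+h]$ does not manufacture positive mass at the bottom of the cylinder. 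Handling these boundary and initial terms rigorously — rather than the monotonicity, which is automatic — is the principal obstacle, and it is precisely why the semicontinuity assumptions are imposed.
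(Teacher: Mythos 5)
A preliminary remark: the paper itself gives no proof of this lemma --- it is quoted from Kilpel\"ainen--Lindqvist \cite{KiLi96}, Lemma~3.1, and Korte--Kuusi--Parviainen \cite{KoKuPa10}, Lemma~3.5 --- so your attempt can only be compared with the standard literature argument. Your skeleton (Steklov averaging in time, subtracting the two weak formulations, testing with the positive part of the difference, discarding the elliptic term by monotonicity of $\xi\mapsto|\xi|^{p-2}\xi$) is exactly that standard argument, and those parts are fine.

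The genuine gap is in the step you yourself flag as ``where the real work lies''. Your justification that $(v_h-u_h)_+\in W^{1,p}_0(U)$ --- namely that $v\le u$ on the lateral boundary ``forces $w_+$ to vanish on the lateral boundary in the trace sense'' --- cannot work. By the paper's definition of weak super/subsolutions, $u$ and $v$ lie in $L^p(s_1,s_2;W^{1,p}(U'))$ only for subcylinders $U'_{s_1,s_2}\Subset U_{t_1,t_2}$; there is no Sobolev regularity up to the lateral boundary, hence no traces at all, and in any case a pointwise boundary inequality for merely semicontinuous functions never yields zero Sobolev boundary values. The semicontinuity hypotheses have to be used differently, through an $\eps$-trick: since $u-v$ is lower semicontinuous on $\overline{U}_{t_1,t_2}$ and nonnegative on the compact set $\partial_p U_{t_1,t_2}$, for each $\eps>0$ the set $\{v\ge u+\eps\}$ is relatively closed in $\overline{U}_{t_1,t_2}$ and disjoint from $\partial_p U_{t_1,t_2}$, so (for bounded $U$) it is a compact subset of $U\times(t_1,t_2]$ at positive distance from both $\partial U\times(t_1,t_2]$ and $\overline{U}\times\{t_1\}$. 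Testing with $(v_h-u_h-\eps)_+$ instead of $(v_h-u_h)_+$ then closes all the holes at once: the test function is compactly supported in space, so no trace argument is needed; its support stays away from $t=t_1$, so the initial Steklov term vanishes identically rather than having to be estimated; and $(v-u-\eps)_+$ is bounded (an upper semicontinuous function on a compact set attains its maximum), which is what makes the quantity $\int_U w_+^2\,dx$ finite when $1<p\le\frac{2n}{n+2}$ --- the paper explicitly warns, in its discussion around Lemma~\ref{lem-KKP}, that $v-u\in L^2$ is not automatic in this range. Running your energy argument with this modified test function and letting $\eps\to0$ at the end gives the lemma; as written, with $\eps=0$, the admissibility and initial-term steps do not go through.
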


\medskip

Let us now come to Perron's method for \eqref{eq:para}.
For us it will be enough to consider Perron solutions for
bounded functions, so for simplicity we restrict ourselves
to this case throughout this paper.

\begin{definition}   \label{def-Perron}
Given a bounded function $f \colon \bdy \Theta \to \R$,
let the upper class $\UU_f$ be the set of all
\p-superparabolic  functions $u$ on $\Theta$ which are
bounded below and such that
\begin{equation} \label{Uf-def}
    \liminf_{\Theta \ni \eta \to \xi} u(\eta) \ge f(\xi) \quad \text{for all }
    \xi \in \bdy \Theta.
\end{equation}
Define the \emph{upper Perron solution} of $f$  by
\[
    \uP f (\xi) = \inf_{u \in \UU_f}  u(\xi), \quad \xi \in \Theta.
\]
Similarly, let the lower class 
$\LL_f$ be the set of all
\p-subparabolic functions $u$ on $\Theta$ which are
bounded above and such that
\[
\limsup_{\Theta \ni \eta \to \xi} u(\eta) \le f(\xi) \quad \text{for all }
\xi \in \bdy \Theta,
\]
and define the \emph{lower Perron solution}  of $f$ by
\[
    \lP f (\xi) = \sup_{u \in \LL_f}  u(\xi), \quad \xi \in \Theta.
\]
\end{definition}

Note that we have an elliptic-type boundary condition
on the full boundary, not just a condition on
the possibly smaller parabolic boundary, whenever it is defined.

It follows from the elliptic-type comparison principle 
in Theorem~\ref{thm-comp-princ}
that $v\le u$ whenever $u\in \UU_f$ and $v\in \LL_f$. 
Hence $\lP f\le \uP f$.
Moreover, Kilpel\"ainen--Lindqvist~\cite{KiLi96}, Theorem~5.1,
proved that both $\lP f$ and $\uP f$
are \p-parabolic. 

The following lemma is useful when constructing new \p-superparabolic
functions.

\begin{lem} \label{lem-pasting}
\textup{(Pasting lemma)}
Let $G \subset \Theta$ be open. 
Also let $u$ and $v$ be \p-super\-pa\-ra\-bo\-lic in $\Theta$ and $G$,
respectively,
and let
\[
    w=\begin{cases}
     \min\{u,v\} & \text{in } G, \\
     u & \text{in } \Theta \setm G. \\
    \end{cases}
\] 
If $w$ is lower semicontinuous, then $w$ is \p-superparabolic in $\Theta$.
\end{lem}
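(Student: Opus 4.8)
The plan is to verify the three defining conditions of \p-superparabolicity (Definition~\ref{def:superparabolic}) for $w$. Condition (i), lower semicontinuity, is exactly the standing hypothesis. Condition (ii) is immediate: since $\min\{u,v\}\le u$ in $G$ and $w=u$ off $G$, we have $-\infty<w\le u$ everywhere, so $w$ is finite at every point where $u$ is finite, and such points are dense in $\Theta$ because $u$ is \p-superparabolic. The entire substance of the lemma therefore lies in condition (iii), the comparison principle against \p-parabolic functions on boxes.

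So fix a space-time box $Q:=Q_{t_1,t_2}\Subset\Theta$ and a function $h\in C(\overline Q)$ that is \p-parabolic in $Q$ with $h\le w$ on $\bdy_p Q$; I must show $h\le w$ throughout $Q$. The first step is to compare with $u$ alone: since $w\le u$ everywhere, $h\le u$ on $\bdy_p Q$, and the \p-superparabolicity of $u$ gives $h\le u$ in all of $Q$. On $Q\setm G$ this already yields $h\le u=w$, so it remains only to prove $h\le v$ on $Q\cap G$; combined with $h\le u$ this gives $h\le\min\{u,v\}=w$ there and finishes the proof.

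For the remaining inequality I would apply the parabolic comparison principle (Theorem~\ref{thm:sekareuna}) in the bounded open set $\Om':=Q\cap G$, with the \p-superparabolic function $v$ and the function $h$, which being \p-parabolic is in particular \p-subparabolic, taking the cut-off time $T=t_2$. Since every point of $Q$, and hence of $\Om'$, has time coordinate strictly below $t_2$, the conclusion will read $h\le v$ on all of $\Om'$, and crucially the boundary hypothesis then need only be checked at points $(x,t)\in\bdy\Om'$ with $t<t_2$ — this is precisely what lets me avoid the uncontrolled top face $Q\times\{t_2\}$, and is the reason for using the parabolic comparison rather than its elliptic counterpart Theorem~\ref{thm-comp-princ}, for which the top face would have to be controlled as well. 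At such a boundary point I first claim $h(x,t)\le w(x,t)$: if $(x,t)\in\bdy Q$ then $t<t_2$ forces $(x,t)\in\bdy_p Q$ and the hypothesis $h\le w$ applies directly; otherwise $(x,t)$ lies in $Q$ but, not being interior to $\Om'$ and being a limit of points of $\Om'\subseteq G$, must lie in $\bdy G$, so $w(x,t)=u(x,t)\ge h(x,t)$ by the first step. The final move — the main technical point — is to upgrade $h(x,t)\le w(x,t)$ to $h(x,t)\le\liminf_{\Om'\ni(y,s)\to(x,t)}v(y,s)$: were this liminf some $L<h(x,t)$, choosing $(y_k,s_k)\in\Om'\subseteq G$ with $v(y_k,s_k)\to L$ gives $w(y_k,s_k)=\min\{u,v\}(y_k,s_k)\le v(y_k,s_k)\to L$, so $\liminf_{(y,s)\to(x,t)}w\le L<h(x,t)\le w(x,t)$, contradicting the lower semicontinuity of $w$. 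Together with the continuity of $h$ this verifies the boundary hypothesis of Theorem~\ref{thm:sekareuna} (the required finiteness being automatic, since $\limsup h=h(x,t)$ is finite and $\liminf v\ge h(x,t)>-\infty$), completing the argument.
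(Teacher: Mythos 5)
Your proposal is correct and follows essentially the same route as the paper's proof: first compare $h$ with $u$ on the whole box, then apply the parabolic comparison principle (Theorem~\ref{thm:sekareuna}) on $Q\cap G$ with cut-off time $T=t_2$, verifying the liminf hypothesis only at boundary points with $t<t_2$. The only (cosmetic) difference is bookkeeping at the boundary of $Q\cap G$: the paper splits into the cases $(x,t)\in G$ (using lower semicontinuity of $v$) and $(x,t)\notin G$ (using lower semicontinuity of $w$), whereas you first establish $h(x,t)\le w(x,t)$ in both cases and then pass to $\liminf v$ uniformly via the lower semicontinuity of $w$ and the inequality $w\le v$ in $Q\cap G$.
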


\begin{proof}
Since $-\infty < w \le u$, $w$ is finite in a dense subset of $\Theta$,
and we only have to obtain the comparison principle.
Therefore, let $Q_{t_1,t_2} \Subset \Theta$ 
be a space-time box,
and $h \in C(\itoverline{Q}_{t_1,t_2})$ be \p-parabolic in $Q_{t_1,t_2}$ and such
that $h \le w$ on $\bdyp Q_{t_1,t_2}$.
Since $h \le u$ on $\bdyp Q_{t_1,t_2}$ and $u$ is \p-superparabolic, 
we directly have that $h \le u$ in $Q_{t_1,t_2}$.

Next let $\Gt=Q_{t_1,t_2} \cap G$ and $(x,t) \in \{(x,t) \in \partial\Gt : t< t_2\}$.
If $(x,t) \in G$, then $(x,t) \in \bdyp Q_{t_1,t_2}$ and thus
by the lower semicontinuity  of $v$,
\[
    \liminf_{\Gt \ni (y,s) \to (x,t)} v(y,s) \ge v(x,t) \ge h(x,t).
\]
On the other hand, if $(x,t) \notin G$, then,
by the lower semicontinuity of $w$, 
\[
    \liminf_{\Gt \ni (y,s) \to (x,t)} v(y,s) \ge w(x,t) = u(x,t) \ge h(x,t).
\]
Hence, 
the parabolic comparison principle in
Theorem~\ref{thm:sekareuna} shows that $h \le v$ in $\Gt$,
and thus $h \le w$ in $Q_{t_1,t_2}$.
\end{proof}

%%%%%%%%%%%%%%%%%%%%%%%%%%%%%%%%%%%%%%%%%%%%
\section{Boundary regularity}\label{S:Boundary}

\begin{definition}
\label{def:regular}
A boundary point $\xi_0\in \partial \Theta$ is 
\emph{regular} with respect to $\Theta$, if
\[
        \lim_{\Theta \ni \xi \to \xi_0} \uP f(\xi)=f(\xi_0)
\]
whenever $f: \partial \Theta \to \R$ is continuous.
Here $\uP f$ denotes the upper Perron solution of $f$.
\end{definition}

Observe that since $\lP f = - \uP (-f)$, regularity can equivalently
be formulated using lower Perron solutions. In the following we will
omit the explicit reference to $\Theta$, whenever no confusion may arise.

Our aim is next to characterise regular boundary points
using families of barriers. 
Such a characterisation serves two purposes: to give a criterion for regularity, 
and to deduce various consequences of regularity.
For the former, one would like to have as weak a condition as possible,
whereas for the latter a stronger condition is often useful.
Therefore, we introduce two conditions, which turn out to be equivalent.

\begin{definition}
\label{def-barrier} 
Let $\xi_0\in \partial \Theta$. A family of functions 
$w_j: \Theta\to (0,\infty]$, $j=1,2,\ldots$, 
 is a \emph{barrier family} in $\Theta$ at the point  $\xi_0$ if
for each $j$, 
\begin{enumerate}
 \item\label{cond-first}  $w_j$ is a  positive \p-superparabolic function in $\Theta$;
 \item\label{cond-third} $\lim_{{\Theta \ni} \zeta\to \xi_0} w_j(\zeta)=0$;
 \item\label{cond-second-weak} 
for each $k=1,2,\ldots$, there is a $j$ such that 
\[
   \liminf_{\Theta \ni \zeta\to\xi} w_j(\zeta) \ge k
   \quad \text{for all } \xi \in \bdy \Theta 
   \text{ with }|\xi-\xi_0| \ge 1/k.
\]
\setcounter{saveenumi}{\value{enumi}}
\end{enumerate}
\bigskip
%
% \bigskip needed since enumerate ends with a display.

We also say that the family $w_j$ 
is a  \emph{strong barrier family} in $\Theta$ at the point  $\xi_0$ if,
in addition  the following conditions hold:
\begin{enumerate}
\setcounter{enumi}{\value{saveenumi}}
\item\label{cond-cont}  $w_j$ is continuous in $\Theta$;
\item\label{cond-second-strong} 
there is a nonnegative function $d \in C(\overline{\Theta})$, with
$d(z)=0$ if and only if $z=\xi_0$,
such that 
for each $k=1,2,\ldots$, there is a $j=j(k)$ such that 
$w_j \ge k d$ in $\Theta$.
\end{enumerate}
\end{definition}

Note that in \ref{cond-second-weak} the conditions
on $w_j$ are only at $\bdy \Theta$, while in \ref{cond-second-strong}
there is a requirement on $w_j$ in all of $\Theta$.
The latter will be important when proving several of the consequences
of the barrier characterisation that we derive later
in this section.
Note also that \ref{cond-second-strong} $\imp$ \ref{cond-second-weak}.

In classical potential theory,  a \emph{barrier} is a superharmonic (when dealing with the 
Laplace equation) or superparabolic (when dealing with the heat equation) 
function $w$ 
such that 
\[ 
\lim_{\zeta\to \xi_0} w(\zeta)=0 
   \quad \text{and} \quad
  \liminf_{\zeta\to \xi} w(\zeta)>0
   \text{ for } \xi \in \bdy \Theta \setm \{\xi_0\}.
\]
Existence of such a \emph{single} barrier implies the regularity of a boundary point in the classical case, since  one can scale and lift the barriers. However, this is not the case with the parabolic \p-Laplacian, since the equation is not homogeneous with respect to $u$: this reflects in that a scaled weak supersolution is not necessarily a weak supersolution. We think that it is precisely this lack of homogeneity, which \emph{forces} the use of a whole family of barriers, instead of just simply one,
but we do not know if a family is really required.

We are now ready to characterise regularity
in terms of the existence of a barrier family (in our sense).
At the same time,  we show that the existence
of a strong barrier family is equivalent.
Note that we do \emph{not} show that every barrier family is a strong
barrier family, only that if there exists a barrier family then there 
also exists a strong barrier family.

\begin{thm} \label{thm:barrier-char}
Let $\xi_0 \in \bdy \Theta$. Then the following are equivalent:
\begin{enumerate}
\renewcommand{\theenumi}{\textup{(\arabic{enumi})}}%
\item \label{i-reg}
$\xi_0$ is regular\/\textup{;}
\item \label{i-weak}
there is a barrier family at $\xi_0$\/\textup{;}
\item \label{i-strong}
there is a strong barrier family at $\xi_0$.
\end{enumerate}
\end{thm}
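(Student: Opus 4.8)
The plan is to prove the cycle \ref{i-strong} $\imp$ \ref{i-weak} $\imp$ \ref{i-reg} $\imp$ \ref{i-strong}. The implication \ref{i-strong} $\imp$ \ref{i-weak} is immediate: a strong barrier family satisfies \ref{cond-first}, \ref{cond-third} and, since \ref{cond-second-strong} $\imp$ \ref{cond-second-weak}, also \ref{cond-second-weak}, so it is in particular a barrier family. This is exactly why one never has to upgrade a given barrier family to a strong one; it suffices to manufacture a strong family out of regularity.

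For \ref{i-weak} $\imp$ \ref{i-reg}, let $\{w_j\}$ be a barrier family at $\xi_0$ and let $f\colon\bdy\Theta\to\R$ be continuous, with $M=\sup_{\bdy\Theta}|f|$. Fix $\eps>0$ and choose $\delta>0$ with $|f(\xi)-f(\xi_0)|<\eps$ whenever $|\xi-\xi_0|<\delta$. I would then apply \ref{cond-second-weak} with an index $k\ge\max\{1/\delta,\,2M\}$ to obtain a member $w_j$ with $\liminf_{\zeta\to\xi}w_j(\zeta)\ge k$ for all $\xi\in\bdy\Theta$ satisfying $|\xi-\xi_0|\ge 1/k$. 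The crucial point, reflecting the inhomogeneity stressed in the introduction, is that I do \emph{not} rescale $w_j$; I only add a constant and set $u=f(\xi_0)+\eps+w_j$, which is still $p$-superparabolic and bounded below. For $|\xi-\xi_0|<1/k\le\delta$ we get $\liminf_{\eta\to\xi}u(\eta)\ge f(\xi_0)+\eps\ge f(\xi)$ because $w_j>0$, while for $|\xi-\xi_0|\ge1/k$ we get $\liminf_{\eta\to\xi}u(\eta)\ge f(\xi_0)+\eps+k\ge M\ge f(\xi)$. Hence $u\in\UU_f$, so $\uP f\le u$, and letting $\zeta\to\xi_0$ and using \ref{cond-third} gives $\limsup_{\zeta\to\xi_0}\uP f(\zeta)\le f(\xi_0)+\eps$. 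Running the same argument for $-f$ and using $\lP f=-\uP(-f)$ together with $\lP f\le\uP f$ yields $f(\xi_0)\le\liminf\lP f\le\liminf\uP f\le\limsup\uP f\le f(\xi_0)$ as $\zeta\to\xi_0$, so $\lim_{\zeta\to\xi_0}\uP f(\zeta)=f(\xi_0)$ and $\xi_0$ is regular.

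For \ref{i-reg} $\imp$ \ref{i-strong}, assume $\xi_0$ is regular and put $\phi(\xi)=|\xi-\xi_0|$. For each $j$ the boundary datum $f_j=j\phi$ is continuous with $f_j(\xi_0)=0$, so by regularity and the fact that Perron solutions are $p$-parabolic, $w_j:=\uP f_j$ is continuous (giving \ref{cond-cont}) and $\lim_{\zeta\to\xi_0}w_j(\zeta)=0$ (giving \ref{cond-third}). Monotonicity of the Perron solution gives $w_1\le w_2\le\cdots$, comparison with the constant $0\in\LL_{f_j}$ gives $w_j\ge0$, and positivity $w_j>0$ in $\Theta$ then follows from the minimum principle for $p$-parabolic functions together with $w_j\not\equiv0$, so \ref{cond-first} holds. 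What remains is the strong condition \ref{cond-second-strong}, and this is the main obstacle.

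The difficulty is precisely the lack of homogeneity: since a single barrier cannot be multiplied by a large constant, the unbounded growth required in \ref{cond-second-weak} and \ref{cond-second-strong} must be supplied by passing to higher members $w_{j(k)}$ of the family, and these must be controlled from below by a fixed multiple $k\,d$ of a single $d\in C(\overline{\Theta})$ vanishing only at $\xi_0$. I would fix such a $d$ of geometric type (say $d(z)=|z-\xi_0|$ or a parabolically adapted variant) and obtain $w_{j(k)}\ge k\,d$ by a comparison argument: on small space-time sub-boxes abutting $\bdy\Theta$ one bounds $w_{j(k)}$ from below by explicitly constructed $p$-subparabolic functions whose slope at $\xi_0$ can be taken as large as needed (the datum $j\phi$ has slope $j$ there), and then propagates these bounds into $\Theta$ via the parabolic comparison principle (Theorem~\ref{thm:sekareuna}). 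The delicate part will be making this lower bound uniform, especially near $\xi_0$ where both $w_{j(k)}$ and $d$ degenerate; once it is in place, \ref{cond-second-strong} holds, \ref{cond-second-strong} $\imp$ \ref{cond-second-weak} supplies the last barrier-family condition, and the cycle closes.
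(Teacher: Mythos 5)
Your implications \ref{i-strong}$\imp$\ref{i-weak} and \ref{i-weak}$\imp$\ref{i-reg} are correct, and the latter is essentially the paper's own argument: add constants rather than rescale, put $w_j+\eps+f(\xi_0)$ in $\UU_f$ and the corresponding subparabolic function for $-f$ in the lower class, then let $\eps\to0$. The genuine gap is in \ref{i-reg}$\imp$\ref{i-strong}, and it sits exactly at the step you yourself flag as ``the main obstacle'': condition \ref{cond-second-strong}. You take $w_j=\uP(j\phi)$ with $\phi(\xi)=|\xi-\xi_0|$ and then propose to prove $w_{j(k)}\ge k\,d$ by comparison with unspecified \p-subparabolic minorants on small boxes abutting $\bdy\Theta$, propagated inwards by Theorem~\ref{thm:sekareuna}. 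This is not carried out, and as described it is problematic: comparison on a sub-box requires a lower bound for $w_{j(k)}$ on the \emph{whole} parabolic boundary of that box, much of which lies inside $\Theta$ where no lower bound is yet available, and the resulting estimate must be uniform (one fixed $d$, the full constant $k$) all the way up to $\xi_0$, where both sides degenerate. You have reduced the theorem to an unproved quantitative estimate which is the actual content of this implication. A secondary unsupported claim: positivity $w_j>0$ via a strong minimum principle for \p-parabolic functions, which is not among the tools the paper makes available (and is delicate in the singular range).

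The paper's resolution is to choose the boundary datum itself to be \p-subparabolic in $\Theta$, so that the Perron solution dominates it \emph{by definition}, with no propagation argument at all. With $\xi_0$ the origin, set $d(x,t)=\frac{p-1}{p}|x|^{p/(p-1)}+\frac{n}{2\diam\Theta}\,t^2$ and
\[
\psi_j(x,t)=j\,\frac{p-1}{p}|x|^{p/(p-1)}+j^{p-1}\frac{n}{2\diam\Theta}\,t^2
\ \ge\ \min\{j,j^{p-1}\}\,d(x,t);
\]
a direct computation gives $\partial_t\psi_j-\Delta_p\psi_j=j^{p-1}nt/\diam\Theta-j^{p-1}n\le0$, so $\psi_j$ is \p-subparabolic. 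Then $w_j:=\lP\psi_j$ satisfies $w_j\ge\psi_j\ge\min\{j,j^{p-1}\}\,d$ simply because $\psi_j$ belongs to its own lower class; this is condition \ref{cond-second-strong} with $j(k)$ chosen so that $\min\{j,j^{p-1}\}\ge k$. Continuity, \p-superparabolicity and positivity come for free ($w_j$ is \p-parabolic and $\ge\psi_j>0$ in $\Theta$), and condition \ref{cond-third} is exactly where regularity of $\xi_0$ enters, via $\lim_{\Theta\ni\zeta\to\xi_0}\lP\psi_j(\zeta)=\psi_j(\xi_0)=0$. If you want to repair your proof along your own lines, this is the missing idea: make the datum globally subparabolic so the Perron solution sits above it everywhere automatically.
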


\begin{proof}
\ref{i-weak} \imp \ref{i-reg}
First we show that if there is a 
barrier family at $\xi_0\in \partial \Theta$, then $\xi_0$ is a regular boundary point.
 Since $f$ in Definition~\ref{def:regular} is continuous, for each
$\eps>0$ there exists a constant  $\delta>0$ such that if
$\abs{\xi-\xi_0}<\delta$, $\xi \in \partial \Theta$, then $\abs{f(\xi)-f(\xi_0)}<\eps$. 
We can therefore choose
$j\geq 1$ large enough so that
\[
\liminf_{\Theta \ni \zeta\to\xi}
w_j (\zeta)+\eps+f(\xi_0)>f(\xi) 
\quad \text{for all } \xi \in \bdy \Theta.
\]
Thus $w_j+\eps+f(\xi_0)$  belongs
to the upper class $\mathcal U_f$,
and hence
\[
\limsup_{\Theta \ni \zeta\to\xi_0} \uP f(\zeta) 
\le \lim_{\Theta \ni \zeta\to\xi_0} w_j(\zeta)+\eps+f(\xi_0)=\eps+f(\xi_0).
\]
Since $-w_j-\eps+f(\xi_0)$ is in the lower class (multiplication
by $-1$ is allowed) if $j$ is large enough, 
we similarly obtain that 
\[
\liminf _{\Theta \ni \zeta\to\xi_0} \uP f(\zeta)
\ge 
\liminf _{\Theta \ni \zeta\to\xi_0} \lP f(\zeta) 
\ge -\eps+f(\xi_0).
\]
Letting $\eps \to 0$ shows that $\xi_0$ is regular.

\ref{i-reg} \imp \ref{i-strong}.
Next we prove that if $\xi\in\partial \Theta$ is regular, 
then there exists a strong barrier family at $\xi_0$. 

Without loss of generality we may assume that $\xi_0$ is the origin. 
For $(x,t)\in \R^{n+1}$ we define
\[
d(x,t)=\frac{p-1}{p}\abs{x}^{p/(p-1)}+\frac{n}{2\diam\Theta} t^2 
\]
and
\begin{equation} \label{eq-barrier-const}
\psi_j(x,t) 
= j \frac{p-1}{p}\abs{x}^{p/(p-1)} + j^{p-1}\frac{n}{2\diam\Theta} t^2
\ge \min\{j,j^{p-1}\} d(x,t).
\end{equation}
A straightforward computation shows that in $\Theta$,
\[
\partial_t{\psi_j}-\Delta_p \psi_j
= j^{p-1}\frac{nt}{\diam\Theta} - j^{p-1}n\leq 0,
\]
i.e.\ $\psi_j$ is \p-subparabolic. 
Setting $w_j:=\lP \psi_j$ 
gives us a strong barrier family at $\xi_0$. 
Indeed, \ref{cond-second-strong} in
\defiref{def-barrier} follows from the definition of the lower Perron solution,
as it yields $w_j \ge \psi_j$,
and \ref{cond-third}
from the fact that $\xi_0$ is regular. 

\ref{i-strong} \imp \ref{i-weak}
This is trivial.
\end{proof}

The first consequence of the barrier characterisation is the following
\emph{restriction} theorem.

\begin{prop} \label{prop-restrict}
Let $\xi_0 \in \bdy \Theta$ and let $G \subset \Theta$ be open and such
that $\xi_0 \in \bdy G$.
If $\xi_0$ is regular with respect to $\Theta$,
then $\xi_0$ is regular with respect to $G$.
\end{prop}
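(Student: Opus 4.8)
The plan is to invoke the barrier characterisation of Theorem~\ref{thm:barrier-char} at both ends: first produce a \emph{strong} barrier family for $\Theta$ at $\xi_0$, then restrict it to $G$ and verify that the restrictions constitute a barrier family for $G$ at $\xi_0$. The decisive point — and what I expect to be the only real obstacle — is that the weak boundary condition \ref{cond-second-weak} does \emph{not} survive restriction: the new boundary $\bdy G$ may contain points lying in the \emph{interior} of $\Theta$, where a barrier for $\Theta$ is only required to be positive rather than large, so the ``blow-up at the rest of the boundary'' is lost. It is precisely the global-in-$\Theta$ control supplied by the comparison function $d$ in the strong condition \ref{cond-second-strong} that repairs this, which is why one must start from a strong family and not merely a weak one.

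Concretely, first I would apply the implication \ref{i-reg} \imp \ref{i-strong} of Theorem~\ref{thm:barrier-char} to obtain a strong barrier family $\{w_j\}$ in $\Theta$ at $\xi_0$, together with the associated $d \in C(\overline{\Theta})$ from \ref{cond-second-strong}. I would then set $\wt_j := w_j|_G$ and $d_G := d|_{\overline G}$, noting that $\overline G \subset \overline\Theta$ and $\xi_0 \in \overline G$ (since $\xi_0 \in \bdy G$), and claim that $\{\wt_j\}$ is a strong barrier family in $G$ at $\xi_0$ with comparison function $d_G$.

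The verification is essentially a matter of locality. For \ref{cond-first}, the restriction of a \p-superparabolic function to the open subset $G$ is again \p-superparabolic: lower semicontinuity plainly restricts; a dense set of finiteness in $\Theta$ meets the open set $G$ in a set that is still dense in $G$ (every nonempty open $V \subseteq G$ is open in $\Theta$, hence meets it); and the comparison principle of \defiref{def:superparabolic}(iii) need only be tested on boxes $Q_{t_1,t_2}\Subset G$, each of which already satisfies $Q_{t_1,t_2}\Subset\Theta$. Positivity, and for \ref{cond-cont} continuity, are preserved trivially. Condition \ref{cond-third} is immediate, as the limit along $G\ni\zeta\to\xi_0$ is a restriction of the limit along $\Theta\ni\zeta\to\xi_0$, which is $0$. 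Finally \ref{cond-second-strong} transfers directly: $d_G\in C(\overline G)$ is nonnegative and vanishes on $\overline G$ exactly at $\xi_0$ (inherited from $d$), while $w_j\ge k\,d\ge k\,d_G$ on $G$ gives the required bound $\wt_j\ge k\,d_G$.

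Having produced a (strong) barrier family in $G$ at $\xi_0$, the implication \ref{i-weak} \imp \ref{i-reg} of Theorem~\ref{thm:barrier-char}, applied now with $G$ in place of $\Theta$, yields that $\xi_0$ is regular with respect to $G$. The steps requiring genuine care are the two flagged above: recognising that the strong family is what restricts cleanly, and the routine-but-essential observation that intersecting a dense set with an open set preserves density.
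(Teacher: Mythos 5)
Your proof is correct and follows essentially the same route as the paper's own argument: obtain a strong barrier family in $\Theta$ via Theorem~\ref{thm:barrier-char}, restrict it together with the comparison function $d$ to $G$, and apply the characterisation again. The paper leaves the verification that the restrictions form a strong barrier family implicit, whereas you spell it out; your observation that the weak condition \ref{cond-second-weak} alone would not survive restriction (since $\bdy G$ may meet the interior of $\Theta$) is precisely the reason the strong family is the right object to restrict.
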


\begin{proof}
By Theorem~\ref{thm:barrier-char}, there is a 
strong barrier family $\{w_j\}_{j=1}^\infty$ in $\Theta$ at $\xi_0$. 
Let $d$ be as given in condition~\ref{cond-second-strong}.
Let also $w_j'=w_j|_G$, $j\ge 1$, and $d'=d|_G$.
Then $\{w_j'\}_{j=1}^\infty$ is a strong barrier family in $G$ at $\xi_0$, 
and thus Theorem~\ref{thm:barrier-char} implies that $\xi_0$ is a regular boundary point with respect to $G$.
\end{proof}

Another consequence of the barrier characterisation is that
regularity is a \emph{local} property.

\begin{prop} \label{prop-local}
Let $\xi_0 \in \bdy \Theta$ 
and $B$ be a ball containing $\xi_0$.
Then
$\xi_0$ is regular with respect to $\Theta$
if and only if $\xi_0$ is regular with respect to $B \cap\Theta$.
\end{prop}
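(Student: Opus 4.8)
The plan is to prove both implications through the barrier characterisation of Theorem~\ref{thm:barrier-char}. The forward direction is essentially free: since $B$ is open and $\xi_0\in B\cap\bdy\Theta$, every neighbourhood of $\xi_0$ meets $\Theta$ and hence meets $B\cap\Theta$, while $\xi_0\notin\Theta\supset B\cap\Theta$; thus $\xi_0\in\bdy(B\cap\Theta)$. Proposition~\ref{prop-restrict} applied with $G=B\cap\Theta$ then immediately gives that regularity with respect to $\Theta$ implies regularity with respect to $B\cap\Theta$.

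For the converse I would start from a \emph{strong} barrier family $\{w_j\}$ in $B\cap\Theta$ at $\xi_0$, which exists by Theorem~\ref{thm:barrier-char}, together with its distance function $d\in C(\overline{B\cap\Theta})$ from condition~\ref{cond-second-strong}. Fix a smaller ball $B'=B(\xi_0,\rho)$ with $\overline{B'}\subset B$. A short check shows $\overline{B'}\cap\overline\Theta\subset\overline{B\cap\Theta}$ (if $\Theta\ni z_n\to z\in\overline{B'}\subset B$, then $z_n\in B\cap\Theta$ eventually), so $d$ is defined and continuous on $\overline{B'}\cap\overline\Theta$; since $d$ vanishes only at $\xi_0$, the number $\delta:=\min\{d(z):z\in\overline{B\cap\Theta},\ |z-\xi_0|\ge\rho\}$ is strictly positive. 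The idea is then to paste, for each $k$, the function $w_{j(k)}$ (the index with $w_{j(k)}\ge kd$) against the constant $k\delta$: set $W_k=\min\{w_{j(k)},k\delta\}$ in $B'\cap\Theta$ and $W_k=k\delta$ in $\Theta\setm B'$. Since constants are \p-parabolic, Lemma~\ref{lem-pasting} (with $G=B'\cap\Theta$) will give that $W_k$ is \p-superparabolic in $\Theta$, provided I verify lower semicontinuity across the interface $\bdy B'\cap\Theta$.

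This interface check is where I expect the argument to have teeth, and it is precisely where the \emph{strong} family is indispensable. At the interior sphere $\bdy B'\cap\Theta$ the weak blow-up condition~\ref{cond-second-weak} gives no information (these are interior points of $B\cap\Theta$), but the pointwise bound $w_{j(k)}\ge kd\ge k\delta$ there, together with the lower semicontinuity of $w_{j(k)}$, forces $\liminf\min\{w_{j(k)},k\delta\}=k\delta$ as one approaches the interface from inside $B'\cap\Theta$, matching the value $k\delta$ coming from $\Theta\setm B'$. Hence $W_k$ is continuous across the interface, in particular lower semicontinuous, and Lemma~\ref{lem-pasting} applies.

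Finally I would check that $\{W_k\}$ is a strong barrier family in $\Theta$ at $\xi_0$ with distance function $D:=\min\{d,\delta\}$ on $\overline{B'}\cap\overline\Theta$ and $D:=\delta$ on $\overline\Theta\setm B'$; this $D$ is continuous on $\overline\Theta$ and vanishes only at $\xi_0$. Positivity and continuity of $W_k$ are clear, condition~\ref{cond-third} holds because near $\xi_0$ one has $W_k=\min\{w_{j(k)},k\delta\}\to0$, and $W_k\ge kD$ holds in $B'\cap\Theta$ from $w_{j(k)}\ge kd$ and trivially in $\Theta\setm B'$ where $W_k=k\delta=kD$, which is condition~\ref{cond-second-strong}. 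Theorem~\ref{thm:barrier-char} then yields that $\xi_0$ is regular with respect to $\Theta$, completing the proof.
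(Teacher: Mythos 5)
Your proof is correct and follows essentially the same route as the paper: the forward direction via Proposition~\ref{prop-restrict}, and the converse by truncating a strong barrier family for $B\cap\Theta$ at constant levels, pasting with a constant function via Lemma~\ref{lem-pasting}, and invoking Theorem~\ref{thm:barrier-char}. The only (harmless) difference is that you paste across a smaller ball $B'$ with $\overline{B'}\subset B$, whereas the paper pastes directly across $\partial B$ using $m=\inf_{\overline{\Theta}\cap\partial B}d$; your variant is, if anything, slightly more careful about where $d$ is defined.
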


\begin{proof}
Proposition~\ref{prop-restrict} immediately implies that if $ \xi_0$ is regular 
with respect to $\Theta$, then it is also regular with respect to $B\cap \Theta$.

Next we show that if $\xi_0$ is regular with respect to $B \cap\Theta$, 
then it is regular 
with respect to $\Theta$.
By Theorem~\ref{thm:barrier-char}, there is a strong barrier family 
$\{w_j\}_{j=1}^\infty$ in $B \cap\Theta$, and a nonnegative continuous function
$d$ associated with it.
Let $m=\inf_{\overline{\Theta} \cap\partial B}d>0$,
\begin{equation*}
d'=
\begin{cases}
\min\{d,m\}& \text{in }B \cap \overline{\Theta}\\
m&\text{in } \Rno\setm B
\end{cases}
\quad \text{and} \quad
w'_{k}=
\begin{cases}
\min\{w_{j(k)},km\}&\text{in } B\cap \Theta\\
km&\text{in }\Theta\setminus B,
\end{cases}
\end{equation*}
where $j(k)$ is as in Definition~\ref{def-barrier}\,\ref{cond-second-strong},
$k=1,2,\ldots$.

Then $w_{k}'$ is continuous in $\Theta$, satisfies $w_{k}'\ge kd'$ in $\Theta$, 
and by
the pasting lemma (Lemma~\ref{lem-pasting}), 
$w_{k}'$ is  \p-superparabolic  in $\Theta$.
Hence $\{w_{k}'\}_{k=1}^\infty$is the
desired strong barrier family in $\Theta$ at $\xi_0$, 
and this implies that $\xi_0$ is regular with respect to $\Theta$.
\end{proof}

Next we state one of our main results.
As we have already remarked, if $u$ is a (super)solution to the \p-parabolic 
equation and $\ga  \ge 0$,
then in general $\ga u$ is not a (super)solution to the same equation,
except when $p=2$. Instead, $\ga u$ is a solution to a multiplied
\p-parabolic equation, namely to
\[
a\frac{\partial u}{\partial t}
=\Delta_p u
,
\quad a=\gamma^{p-2},
\]
as is apparent by straightforward calculations. 
This fact makes it possible to show that when $p \ne 2$, the regular points
are the same for all multiplied \p-parabolic equations. This is quite surprising, 
because a similar statement is known to be false for the heat equation
as a direct consequence of Petrovski\u\i's criterion discussed in the introduction.

On the other hand, for $p=2$ it is enough to have one barrier to get regularity,
since any positive multiple of a barrier is still a barrier.
So the gist of the argument is the following: for $p=2$ one barrier is enough, but multiplied equations have different
regular points, whereas for $p\ne2$ a family of barriers is required 
(we believe), but a regular point is such for all multiplied equations.

\begin{theo} \label{thm:multiplied-eq}
Let $\xi_0 \in \bdy \Theta$ and $a>0$.
If $p \ne 2$,
then $\xi_0$ is regular if and only if it is
regular with respect to the multiplied \p-parabolic  equation
\begin{equation} \label{eq-mult-eq}
      a\parts{u}{t}=\Delta_p u. 
\end{equation}
\end{theo}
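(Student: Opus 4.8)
The plan is to reduce the multiplied equation~\eqref{eq-mult-eq} to~\eqref{eq:para} by the scaling $u \mapsto \ga u$ with $\ga = a^{1/(p-2)}$, which is a well-defined positive number precisely because $p \ne 2$ (and is what lets one realise every $a>0$, since $\ga^{p-2}=a$). As recorded just before the statement, $v = \ga u$ sends solutions of~\eqref{eq:para} to solutions of~\eqref{eq-mult-eq}; equivalently, $h$ is \p-parabolic for~\eqref{eq-mult-eq} if and only if $h/\ga$ is \p-parabolic for~\eqref{eq:para}.

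First I would promote this from solutions to \p-superparabolic functions. Since $\ga>0$, multiplication by $\ga$ preserves lower semicontinuity and the finiteness on a dense set, so conditions~(i) and~(ii) of \defiref{def:superparabolic} transfer immediately. For the comparison condition~(iii), if $u$ is \p-superparabolic for~\eqref{eq:para} and $h$ is \p-parabolic for~\eqref{eq-mult-eq} with $h \le \ga u$ on $\bdyp Q_{t_1,t_2}$, then $h/\ga$ is \p-parabolic for~\eqref{eq:para} and $h/\ga \le u$ on $\bdyp Q_{t_1,t_2}$, so $h/\ga \le u$ inside and hence $h \le \ga u$ inside. Thus $u \mapsto \ga u$ is a bijection, with inverse $v \mapsto v/\ga$, between the \p-superparabolic functions of~\eqref{eq:para} and those of~\eqref{eq-mult-eq}, and likewise for \p-subparabolic functions.

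Applying this bijection to the upper class of \defiref{def-Perron} (boundedness below and the boundary inequality $\liminf \ga v \ge f$, i.e.\ $\liminf v \ge f/\ga$, are both preserved), I obtain the scaling identity $\uPa f = \ga\,\uP(f/\ga)$ for the respective upper Perron solutions. Consequently, regularity of $\xi_0$ for~\eqref{eq-mult-eq} -- namely $\lim_{\xi\to\xi_0}\uPa f(\xi)=f(\xi_0)$ for every continuous $f$ -- is equivalent to $\lim_{\xi\to\xi_0}\uP(f/\ga)(\xi)=(f/\ga)(\xi_0)$; and since $f/\ga$ ranges over all continuous boundary data as $f$ does, this is exactly regularity of $\xi_0$ for~\eqref{eq:para}. (Alternatively, one may transport a strong barrier family: if $\{w_j\}$ is one for~\eqref{eq:para} at $\xi_0$ with associated function $d$, then $\{\ga w_j\}$ is one for~\eqref{eq-mult-eq}, since taking $j=j(\lceil k/\ga\rceil)$ gives $\ga w_j \ge k\,d$, and one concludes by Theorem~\ref{thm:barrier-char} applied to both equations.)

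The step requiring the most care -- and the main obstacle -- is the bijection of the second paragraph, which is what makes the whole potential-theoretic apparatus (Perron solutions, the comparison principles, and Theorem~\ref{thm:barrier-char}) transfer verbatim from~\eqref{eq:para} to~\eqref{eq-mult-eq}. The only subtlety is to confirm that the correspondence $h \leftrightarrow h/\ga$ of \p-parabolic functions respects the box-comparison definition, after which every ingredient simply scales by the single positive constant $\ga$; the hypothesis $p \ne 2$ enters solely through the existence of $\ga$.
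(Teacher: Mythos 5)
Your proposal is correct and follows essentially the same route as the paper: both hinge on the scaling $u \mapsto a^{1/(p-2)}u$ converting between the two equations, the resulting bijection of \p-superparabolic functions and hence of the upper Perron classes (your identity $\uPa f = \ga\,\uP(f/\ga)$ is precisely the paper's Corollary~\eqref{eq-uPa}), and the conclusion directly from Definition~\ref{def:regular}. The only difference is one of emphasis: the paper verifies the scaling at the level of weak supersolutions by direct computation and calls the transfer to \p-superparabolic functions obvious, whereas you spell out that transfer via the box-comparison in Definition~\ref{def:superparabolic}.
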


\begin{proof}
Let $w$ be a weak supersolution
to the \p-parabolic equation and let $\wt = a^{1/(p-2)}w$.
Then
\[
a\,\partial_t{\wt}-
\Delta_p \wt
=a^{1+1/(p-2)}\partial_t{w}-a^{1+1/(p-2)}
\Delta_p w
\geq 0,
\]
and thus $\wt$  is a weak supersolution to the multiplied
\p-parabolic  equation, and vice versa.
The same equivalence obviously holds also for \p-superparabolic functions.

It follows directly that $u \in \UU_f$ if and only if
$a^{1/(p-2)} u \in \UU_{a^{1/(p-2)}f}^a$, where $\UU_f^a$ is the upper class
defining the upper Perron solution with respect to \eqref{eq-mult-eq}.
The equivalence of regularity of $\xi_0$ with respect to
\eqref{eq:para} and with respect to
\eqref{eq-mult-eq} now follows directly from the definition.
\end{proof}

It is noteworthy that Theorem~\ref{thm:multiplied-eq} holds both for $p>2$ and
$1<p<2$.
The next corollary immediately follows from the proof of the previous result.
\begin{cor}
Let $f\in C(\partial \Theta)$. Then 
\begin{equation} \label{eq-uPa}
      \uPa (a^{1/(p-2)}f) = a^{1/(p-2)} \uP f,
\end{equation}
where $\uPa$ denotes the upper Perron solution with respect
to \eqref{eq-mult-eq}.
\end{cor}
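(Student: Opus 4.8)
The plan is to read the identity straight off the bijection that was already established inside the proof of Theorem~\ref{thm:multiplied-eq}. There it is shown that $u \in \UU_f$ if and only if $a^{1/(p-2)}u \in \UU^a_{a^{1/(p-2)}f}$, where $\UU^a_g$ is the upper class defining $\uPa$ with respect to the multiplied equation \eqref{eq-mult-eq}. So first I would record that the map $u \mapsto a^{1/(p-2)}u$ is a bijection from $\UU_f$ onto $\UU^a_{a^{1/(p-2)}f}$, with inverse given by multiplication by $a^{-1/(p-2)}$; this is exactly the content of that equivalence, now read as a statement about the two classes rather than about a single function.

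The key observation is that $a^{1/(p-2)}>0$ for every $a>0$, irrespective of the sign of $p-2$, so multiplication by this constant is an order-preserving bijection of $\R$ and therefore commutes with taking a pointwise infimum. Concretely, at each $\xi \in \Theta$,
\[
\uPa(a^{1/(p-2)}f)(\xi) = \inf_{v \in \UU^a_{a^{1/(p-2)}f}} v(\xi) = \inf_{u \in \UU_f} a^{1/(p-2)}u(\xi) = a^{1/(p-2)}\inf_{u \in \UU_f} u(\xi) = a^{1/(p-2)}\uP f(\xi),
\]
where the second equality uses the bijection between the two upper classes and the third uses the positivity of $a^{1/(p-2)}$ to pull the constant out of the infimum.

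There is essentially no obstacle here: once the correspondence between the upper classes is in hand from the theorem, the only thing to verify is that scaling by a positive constant passes through the infimum, which is elementary. The sole point that deserves a word of care is precisely that $a^{1/(p-2)}$ is genuinely positive, since this is what prevents the constant from turning the infimum into a supremum when pulled out; as $a>0$, this holds uniformly for both ranges $p>2$ and $1<p<2$.
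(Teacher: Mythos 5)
Your proposal is correct and is essentially the paper's own argument: the paper states that the corollary ``immediately follows from the proof of the previous result,'' meaning precisely the bijection $u \in \UU_f \Leftrightarrow a^{1/(p-2)}u \in \UU^a_{a^{1/(p-2)}f}$ established in the proof of Theorem~\ref{thm:multiplied-eq}, which you then correctly combine with the positivity of $a^{1/(p-2)}$ to pass the constant through the infimum. Nothing is missing; your write-up simply makes explicit the elementary step the paper leaves to the reader.
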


\begin{lem} \label{lem-component}
Assume that\/ $\Theta_1,\ldots,\Theta_m$ are pairwise disjoint bounded open sets
in $\Rno$ with $\xi_0 \in \bdy \Theta_j$, $j=1,\ldots,m$.
Then $\xi_0$ is regular with respect to $\Theta=\bigcup_{j=1}^m \Theta_j$
if and only if it is regular with respect to each $\Theta_j$, $j=1,\ldots,m$.
\end{lem}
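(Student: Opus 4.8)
The plan is to prove both implications through the barrier characterisation of Theorem~\ref{thm:barrier-char}, so that regularity is always converted into the existence of a barrier family and back.

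For the forward implication, suppose $\xi_0$ is regular with respect to $\Theta$. Each $\Theta_j$ is an open subset of $\Theta$ with $\xi_0\in\bdy\Theta_j$, so the restriction theorem (Proposition~\ref{prop-restrict}) applies verbatim with $G=\Theta_j$ and immediately yields regularity of $\xi_0$ with respect to $\Theta_j$. This direction requires no new work.

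For the converse, I would construct a single barrier family in $\Theta$ by gluing together barrier families on the individual pieces. First I would record the topological fact that, since the $\Theta_j$ are pairwise disjoint open sets, each $\Theta_j$ is relatively clopen in $\Theta$; consequently every space-time box $Q_{t_1,t_2}\Subset\Theta$, being connected, lies in a single $\Theta_j$, and \p-superparabolicity (Definition~\ref{def:superparabolic}) is therefore a purely local property on the components. Hence a function defined piecewise by a \p-superparabolic function on each $\Theta_j$ is automatically \p-superparabolic on $\Theta$, with no appeal to the pasting lemma needed. By Theorem~\ref{thm:barrier-char}, regularity of $\xi_0$ with respect to each $\Theta_j$ supplies a barrier family $\{w_{j,k}\}_k$ in $\Theta_j$. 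For each $k$ I would choose, using condition~\ref{cond-second-weak} of Definition~\ref{def-barrier} in each $\Theta_j$, an index $l_j(k)$ with $\liminf_{\Theta_j\ni\zeta\to\xi}w_{j,l_j(k)}(\zeta)\ge k$ for every $\xi\in\bdy\Theta_j$ satisfying $|\xi-\xi_0|\ge1/k$, and then define $w_k$ on $\Theta$ by $w_k=w_{j,l_j(k)}$ on each $\Theta_j$.

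It then remains to verify the three barrier conditions for $\{w_k\}$. Positivity and \p-superparabolicity are immediate from the locality just noted. Condition~\ref{cond-third} holds because $\xi_0\in\bdy\Theta_j$ for every $j$ and each piece tends to $0$ at $\xi_0$; as there are only finitely many pieces, the limit over $\Theta$ is also $0$. The decisive step, which I expect to require the most care, is condition~\ref{cond-second-weak} at a general point $\xi\in\bdy\Theta$ with $|\xi-\xi_0|\ge1/k$. Here I would exploit that $\bdy\Theta\subset\bigcup_j\bdy\Theta_j$ together with the finiteness of $m$: given any sequence $\zeta_n\to\xi$ in $\Theta$, I split it into the finitely many subsequences contained in a fixed $\Theta_j$; each such $j$ must satisfy $\xi\in\bdy\Theta_j$ since $\xi\notin\Theta$, so the chosen barrier $w_{j,l_j(k)}$ forces the corresponding subsequential $\liminf$ to be at least $k$, and the minimum over the finitely many occurring indices gives $\liminf_{\Theta\ni\zeta\to\xi}w_k(\zeta)\ge k$. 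With the three conditions checked, $\{w_k\}$ is a barrier family in $\Theta$ at $\xi_0$, and Theorem~\ref{thm:barrier-char} yields regularity. The only genuine obstacle is this bookkeeping at $\bdy\Theta$, where one must simultaneously allow $\xi$ to lie on the boundaries of several components and use $m<\infty$; everything else is formal.
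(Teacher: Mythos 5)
Your proof is correct, but the converse direction takes a genuinely different route from the paper's. The necessity direction is identical (both via Proposition~\ref{prop-restrict}). For the sufficiency, the paper does not go through barriers at all: it works directly with Definition~\ref{def:regular}, invoking the identity $(\uP f)|_{\Theta_j}=\uPind{\Theta_j} f|_{\bdy \Theta_j}$ for $f\in C(\bdy\Theta)$, so that regularity with respect to each $\Theta_j$ gives $\lim_{\Theta_j\ni\xi\to\xi_0}\uP f(\xi)=f(\xi_0)$ for every $j$, and the finitely many componentwise limits combine into the limit over $\Theta$. That identity rests on exactly the observation you isolate: the $\Theta_j$ are relatively clopen in $\Theta$, so \p-superparabolicity is a componentwise notion (every box $Q_{t_1,t_2}\Subset\Theta$ is connected, hence lies in a single $\Theta_j$), and $\bdy\Theta_j\subset\bdy\Theta$, so functions in the upper class $\UU_f$ over $\Theta$ correspond to tuples of upper-class functions over the pieces. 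The paper's argument is shorter but leaves this Perron identity unproved; yours is longer but self-contained within the barrier machinery of Theorem~\ref{thm:barrier-char}: you glue the families $w_{j,l_j(k)}$ componentwise, correctly note that the pasting lemma (Lemma~\ref{lem-pasting}) is not needed, and handle the one delicate point, namely a boundary point $\xi\in\bdy\Theta$ lying on several $\bdy\Theta_j$, where you rightly use $m<\infty$ together with the fact that any component contributing points near $\xi$ must have $\xi$ on its boundary. One small remark: your glued family is a barrier family but not obviously a \emph{strong} one, since the functions $d$ from Definition~\ref{def-barrier}\,\ref{cond-second-strong} on different components need not match up on $\overline{\Theta}$; as Theorem~\ref{thm:barrier-char} only requires the weak notion, this is exactly the right choice.
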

\begin{proof}
The necessity follows from Proposition~\ref{prop-restrict}.
As for the sufficiency, let $f \in C(\bdy \Theta)$.
Then $(\uP f)|_{\Theta_j}= \uPind{\Theta_j} f|_{\bdy \Theta_j}$ and thus
\[
     \lim_{\Theta_j \ni \xi \to \xi_0} \uP f(\xi)
     = \lim_{\Theta_j \ni \xi \to \xi_0} \uPind{\Theta_j} f|_{\bdy \Theta_j}(\xi)
       = f(\xi_0)
\]
for $j=1,\ldots,m$.
It follows that
\[
     \lim_{\Theta \ni \xi \to \xi_0} \uP f(\xi)
       = f(\xi_0),
\]
and hence $\xi_0$ is regular with respect to $\Theta$.
\end{proof}

Next we let $\Theta=G_T=G\times(0,T)$, where $G\subset\Rn$ is an open set, and recall 
two results from Kilpel\"ainen--Lindqvist~\cite{KiLi96}. 
As we want to use our barrier family characterisation, we sketch the proofs 
in this context for the convenience of the reader.

\begin{theo}  \label{thm-ell-reg-iff-par-reg}
Let $x_0\in \partial G $  and $0<t_0\le T$. 
Then the boundary point $\xi_0= (x_0,t_0)$ 
is regular with respect to $G_T$, in the sense of Definition~\ref{def:regular}, 
if and only if $x_0$ is regular for \p-harmonic functions
with respect to $G$. 
\end{theo}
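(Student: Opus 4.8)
The plan is to prove both implications by passing through barriers: parabolic regularity of $\xi_0=(x_0,t_0)$ is equivalent to the existence of a barrier family by Theorem~\ref{thm:barrier-char}, while elliptic regularity of $x_0$ for \p-harmonic functions is equivalent, by the classical elliptic barrier characterisation (see \cite{HeKiMa}), to the existence of a positive \p-superharmonic $w$ on $G$ with $w(x)\to0$ as $x\to x_0$ and $\liminf_{x\to y}w(x)>0$ for every $y\in\bdy G\setm\{x_0\}$. By Proposition~\ref{prop-local} we may work in a small cylinder, and we normalise $x_0=0$. The two directions then amount to turning a barrier family in the $(x,t)$-variables into a spatial barrier, and conversely.

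For ``parabolically regular $\imp$ elliptically regular'', let $\{w_j\}$ be a barrier family at $\xi_0$ and fix a compact interval $I\subset(0,T)$ with $t_0\in I$ (when $t_0=T$ take instead the half-open $I=(T-\de,T)$ and read all infima below as one-sided limits). Set $W_j(x)=\inf_{t\in I}w_j(x,t)$. I would first check that $W_j$ is \p-superharmonic in $G$: it is lower semicontinuous, being the infimum over a compact set of the jointly lower semicontinuous $w_j$, and if $h$ is \p-harmonic on a ball $B\Subset G$ with $h\le W_j$ on $\bdy B$, then the time-independent function $h$ satisfies $h\le w_j$ on $\bdyp(B\times I)$, hence $h\le w_j$ in $B\times I$ by superparabolicity of $w_j$, so $h\le W_j$ in $B$. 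Since $w_j(x,t)\to0$ as $(x,t)\to\xi_0$, we get $\limsup_{x\to x_0}W_j(x)\le\lim_{x\to x_0}w_j(x,t_0)=0$, while condition~\ref{cond-second-weak} together with the compactness of $I$ gives, for each $k$, a $j$ with $\liminf_{x\to y}W_j(x)\ge k$ whenever $|y-x_0|\ge1/k$. Thus $\{W_j\}$ is an elliptic barrier family, and $x_0$ is regular for \p-harmonic functions by the time-independent analogue of the implication \ref{i-weak}$\,\imp\,$\ref{i-reg} of Theorem~\ref{thm:barrier-char}.

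For the converse, start from an elliptic barrier $w$ as above. The key step is to upgrade $w$ to a \emph{strictly} \p-superharmonic barrier $\Phi$: I would let $\Phi$ solve the obstacle problem for $-\Delta_p u=1$ in $G$ with obstacle $\min\{w,1\}$ and boundary values the trace of $\min\{w,1\}$. Then $\Phi$ is continuous, $\Phi\ge\min\{w,1\}$ (so $\liminf_{x\to y}\Phi>0$ for all $y\ne x_0$), $-\Delta_p\Phi\ge1$ weakly, and $\Phi(x)\to0$ as $x\to x_0$, the regularity of $x_0$ being unaffected by the bounded right-hand side. Now set
\[
  w_j(x,t)=j\,\Phi(x)+j^{\,p-1}\,|t-t_0|.
\]
For $t<t_0$ one has $\bdy_t w_j=-j^{p-1}$ while $-\Delta_p(j\Phi)\ge j^{p-1}$, so $\bdy_t w_j-\Delta_p w_j\ge0$; for $t>t_0$ the time derivative equals $+j^{p-1}$ and the inequality is strict; at $t=t_0$ the time derivative jumps upward, which only helps. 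Hence each $w_j$ is a continuous weak supersolution and thus \p-superparabolic by Lemma~\ref{lem-comp}. It is positive, tends to $0$ at $\xi_0$ (using $\Phi(x)\to0$), and blows up along $\bdy G_T$ away from $\xi_0$ — spatially through $j\Phi$ and in time through $j^{p-1}|t-t_0|$ — so $\{w_j\}$ is a barrier family and $\xi_0$ is regular by Theorem~\ref{thm:barrier-char}.

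The crux is the backward-time asymmetry in this second implication. A naive additive candidate $w(x)+g(t)$ is \p-superparabolic only when $g$ is nondecreasing, and such a $g$ with $g(t_0)=0$ is negative for $t<t_0$; it can therefore never be positive and large on the \emph{past} part of the boundary (the initial face and the lateral boundary at times below $t_0$) while vanishing at $\xi_0$. The remedy is precisely the strict superharmonicity of $\Phi$, which is what licenses the genuinely time-decreasing, yet superparabolic, combination $\Phi(x)+(t_0-t)$, so that the single term $j^{p-1}|t-t_0|$ can separate both future and past. Because the \p-Laplacian is not homogeneous for $p\ne2$, one cannot produce $\Phi$ by the linear device $w+\eps\phi$, which is why I route through the obstacle problem and compare with $w$; verifying the lower bound $\Phi\ge\min\{w,1\}$ up to the irregular points of $\bdy G$ is the step I expect to require the most care.
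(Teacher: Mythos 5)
Your first implication (parabolic regularity of $\xi_0$ implies elliptic regularity of $x_0$) has a genuine gap, and it is not a fixable slip. You claim that if $h$ is \p-harmonic in a spatial ball $B \Subset G$ with $h \le W_j := \inf_{t \in I} w_j(\cdot,t)$ on $\bdy B$, then $h \le w_j$ on $\bdyp(B \times I)$. Writing $I=(t_1,t_2)$, the parabolic boundary $\bdyp(B\times I) = (\overline{B}\times\{t_1\}) \cup (\bdy B \times (t_1,t_2])$ contains the bottom face $\overline{B}\times\{t_1\}$, and there you would need $h(x)\le w_j(x,t_1)$ for $x$ in the \emph{interior} of $B$, which is not implied by $h\le W_j$ on $\bdy B$. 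Worse, the conclusion you are after -- that the infimum over time of a \p-superparabolic function is \p-superharmonic -- is simply false. Take the \p-parabolic function $f_j(x,t)=j\frac{p-1}{p}|x-x_0|^{p/(p-1)}+nj^{p-1}t$ used in Lemma~\ref{lm-Omnibus}: its infimum over $t\in I$ is $j\frac{p-1}{p}|x-x_0|^{p/(p-1)}+nj^{p-1}t_1$, whose \p-Laplacian equals $nj^{p-1}>0$, so it is strictly \p-subharmonic; on a spatial ball centred at $x_0$, the constant equal to its boundary value is \p-harmonic, coincides with it on the sphere, and lies strictly above it inside, so exactly the comparison you invoke fails (already for $p=2$: $|x|^2+2nt$ is caloric, but its infimum over time is subharmonic). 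Adding a constant to make the function positive changes nothing, so membership in a barrier family cannot rescue the argument: the ``past'' cannot be discarded by a pointwise infimum in time. This direction needs a different mechanism; the paper invokes the corresponding direction of Theorem~6.5 of Kilpel\"ainen--Lindqvist \cite{KiLi96}, which argues via parabolic Perron solutions of \emph{time-independent} boundary data and identifies them with (time-independent extensions of) elliptic Perron solutions, rather than manufacturing a spatial barrier out of a parabolic one.

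Your second implication is sound in outline and is essentially the paper's own argument: a spatial function whose \p-Laplacian is $\le -j^{p-1}$, plus a term linear in $t$. The paper produces that spatial function more directly, solving $\Delta_p u_j=-j^{p-1}$ in $G$ with $u_j-j|x-x_0|\in \Wp_0(G)$; then $u_j\ge j|x-x_0|$ by comparison with the \p-subharmonic function $j|x-x_0|$, and $u_j(x)\to 0$ as $x\to x_0$ by Mal\'y--Ziemer \cite{MaZi} (Theorem~6.21 and Corollary~6.22) -- which is precisely the ``regularity is unaffected by a bounded right-hand side'' fact that you assert without justification. Your obstacle-problem detour carries real technical debts that this bypasses: $\min\{w,1\}$ is only a bounded \p-superharmonic function, so it need not lie in $W^{1,p}(G)$ or possess a Sobolev trace, and the limit $\Phi(x)\to0$ at $x_0$ is again exactly the Mal\'y--Ziemer statement. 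One genuine advantage of your version is the term $j^{p-1}|t-t_0|$: it keeps $w_j$ positive and large also on the part of the boundary with $t>t_0$, whereas the paper's family $w_j=u_j+j^{p-1}(t_0-t)$ serves as a barrier family only for $t<t_0$, the case $t_0<T$ being reduced to this by Theorem~\ref{thm-Omminus}.
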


A \emph{\p-harmonic function} is a continuous weak solution to the 
(elliptic) \p-Laplace
equation $\Delta_p u=0$.

\begin{proof}
The proof of Theorem~6.5 in \cite{KiLi96} can immediately be modified to use the barrier family characterisation. 
Suppose that $x_0$ is regular for \p-harmonic functions
with respect to $G$ 
(for more details on this notion, see for example 
Heinonen--Kilpel\"ainen--Martio~\cite{HeKiMa}). 
Let $ \vp(x)= \abs{x-x_0}$ and let $u_j$ be a solution to
\[
\begin{cases}
\Delta_p u_j
=-j^{p-1}& \text{in } G,\\
u_j-j \vp\in W^{1,p}_0(G).
\end{cases}
\]
Then $u_j$ is \p-superharmonic and $u_j(x)\ge j \abs{x-x_0}$ because $\vp$ is \p-subharmonic. Define
\[
\begin{split}
w_j(x,t)=u_j(x)+j^{p-1} (t_0-t).
\end{split}
\]
Then
\[
\Delta_p w_j
=-j^{p-1}=\partial_t w_j,
\]
and it follows that $\{w_j\}_{j=1}^\infty$
is the desired barrier family, and $\xi_0$ is regular with respect to $G_T$.

The other direction of the proof of Theorem~6.5 in \cite{KiLi96} holds verbatim.
\end{proof}

We will also need the following result, stating that what happens in the future does not affect the regularity of the boundary point. To be more precise, if we split the domain $\Theta$ at the level $t_0$, and consider a boundary point $(x_0,t_0)$ at the same time instant, then the lower part
\[
\begin{split}
\Thetam=\{(x,t) \in \Theta : t < t_0\}
\end{split}
\]
determines the regularity. 
We begin with an introductory lemma, which we will also use later on.
In particular, it shows that the earliest points are always regular.

\begin{lemma}\label{lm-Omnibus}
Let $\xi_0=(x_0,t_0) \in \bdy \Theta$. If
$\xi_0\notin\bdy \Thetam$ 
\textup{(}in particular, if $\Thetam=\emptyset$\textup{)}, 
then $\xi_0$ is regular.
\end{lemma}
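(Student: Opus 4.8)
The plan is to produce an explicit barrier family at $\xi_0$ and then invoke Theorem~\ref{thm:barrier-char}. The first step is to translate the hypothesis into a statement about the geometry of $\Theta$ near $\xi_0$. Since $\Thetam$ is open and $\xi_0\notin\Theta$, the assumption $\xi_0\notin\bdry\Thetam$ is equivalent to $\xi_0\notin\overline{\Thetam}$; as $\overline{\Thetam}$ is closed, $\xi_0$ then lies at a positive distance from it, so there is an $r>0$ with
\[
  B(\xi_0,r)\cap\Theta\subset\{(x,t): t\ge t_0\}.
\]
In words, near $\xi_0$ the set $\Theta$ lies entirely at or above the time level $t_0$; this also covers the case $\Thetam=\emptyset$, where the hypothesis holds trivially. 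By the locality of regularity (Proposition~\ref{prop-local}) it is enough to prove that $\xi_0$ is regular with respect to $\Om:=B(\xi_0,r)\cap\Theta$, on all of which $t\ge t_0$.

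Translating so that $\xi_0=(0,0)$, I would take for $j=1,2,\dots$
\[
  w_j(x,t)=j\,\frac{p-1}{p}\abs{x}^{p/(p-1)}+j^{p-1}n\,t.
\]
The one computation to record is $\Delta_p\bigl(\tfrac{p-1}{p}\abs{x}^{p/(p-1)}\bigr)=n$ in the weak sense: the flux field $\abs{\grad(\cdot)}^{p-2}\grad(\cdot)$ equals $x$, which is smooth across the origin, so no singular contribution appears there. Consequently $\partial_t w_j-\Delta_p w_j=j^{p-1}n-j^{p-1}n=0$, so each $w_j$ is a continuous weak solution, in particular a weak supersolution, and hence $p$-superparabolic in $\Om$ by Lemma~\ref{lem-comp}.

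It remains to verify the conditions of \defiref{def-barrier} on $\Om$. Because $t\ge0$ on $\Om$ and both terms of $w_j$ are nonnegative there, $w_j>0$ throughout $\Om$, and since $w_j$ is continuous with $w_j(0,0)=0$ we get $w_j\to0$ at $\xi_0$. For the growth condition, fix $k$ and take $\xi=(x,t)\in\bdry\Om$ with $\abs{\xi}\ge 1/k$; here $t\ge0$, so from $\abs{x}^2+t^2\ge 1/k^2$ either $t$ or $\abs{x}$ is bounded below by a fixed multiple of $1/k$, and in either case continuity gives $w_j(\xi)\ge k$ once $j$ is large. In fact, with $d(x,t)=\tfrac{p-1}{p}\abs{x}^{p/(p-1)}+n\,t$ one has $w_j\ge\min\{j,j^{p-1}\}\,d$ on $\Om$, and $d\in C(\overline{\Om})$ vanishes exactly at $\xi_0$, so $\{w_j\}$ is even a \emph{strong} barrier family. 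Theorem~\ref{thm:barrier-char} then gives regularity with respect to $\Om$, and hence, by Proposition~\ref{prop-local}, with respect to $\Theta$.

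The subtle point, and the place where the hypothesis is used, is that the time term $j^{p-1}n\,t$ is only nonnegative where $t\ge t_0$; on the part of $\Theta$ lying below $t_0$ this term is negative, so $w_j$ would there fail to be positive and the construction could not be run on $\Theta$ directly. The condition $\xi_0\notin\bdry\Thetam$ is exactly what secures a neighbourhood of $\xi_0$ in which $t\ge t_0$, and Proposition~\ref{prop-local} is what lets us discard the rest of $\Theta$. The only computational care needed is the verification that $\abs{x}^{p/(p-1)}$ carries no distributional mass at the origin, which follows since its $p$-Laplacian flux $x$ is continuous there.
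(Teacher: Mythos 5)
Your proposal is correct and is essentially the paper's own proof: the paper uses exactly the same functions $f_j(x,t)= j \tfrac{p-1}{p} |x-x_0|^{p/(p-1)} +nj^{p-1} (t-t_0)$, notes they are $p$-parabolic in $\Rno$, and observes that they form a strong barrier family in $\Theta\cap V$ for a neighbourhood $V$ of $\xi_0$, which is precisely your $B(\xi_0,r)\cap\Theta$ reduction via Proposition~\ref{prop-local}. You merely spell out the details the paper leaves implicit (the existence of the neighbourhood, the weak computation of $\Delta_p$ across the origin, and the verification of the conditions in Definition~\ref{def-barrier}), all of which are done correctly.
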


\begin{proof}
If $\xi_0\notin \partial \Thetam$, then the functions
\begin{equation*} 
  f_j(x,t)= j \frac{p-1}{p} |x-x_0|^{p/(p-1)} +nj^{p-1} (t-t_0)
\end{equation*}
are \p-parabolic in $\R^{n+1}$ and form a strong
barrier family in $\Theta\cap V$ for some neighbourhood $V$ of $\xi_0$.
\end{proof}

It follows from Theorem~\ref{thm-ell-reg-iff-par-reg} and 
Lemma~\ref{lm-Omnibus} that in the setting of Theorem~\ref{thm-cont-exist},
the Perron solution coincides with the one provided by Theorem~\ref{thm-cont-exist}.

\begin{theo} \label{thm-Omminus}
Let $\xi_0=(x_0,t_0) \in \bdy \Theta$ and  $\Thetam\ne\emptyset$. Then
$\xi_0$ is regular with respect to $\Theta$
if and only if either $\xi_0$ is regular with respect to $\Thetam$
or $\xi_0 \notin \bdy \Thetam$.
\end{theo}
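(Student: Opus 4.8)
The plan is to split the argument according to whether $\xi_0\in\bdy\Thetam$, using the barrier characterisation of Theorem~\ref{thm:barrier-char} as the main engine. If $\xi_0\notin\bdy\Thetam$, then the second alternative on the right-hand side holds automatically, while Lemma~\ref{lm-Omnibus} already asserts that $\xi_0$ is regular with respect to $\Theta$; so both sides are true and the equivalence is immediate. It remains to treat the case $\xi_0\in\bdy\Thetam$, where the claim reduces to: $\xi_0$ is regular with respect to $\Theta$ if and only if it is regular with respect to $\Thetam$. One implication is easy: since $\Thetam$ is an open subset of $\Theta$ with $\xi_0\in\bdy\Thetam$, the restriction theorem (Proposition~\ref{prop-restrict}) shows that regularity with respect to $\Theta$ forces regularity with respect to $\Thetam$.

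For the converse I would start from a strong barrier family $\{v_j\}$ in $\Thetam$ at $\xi_0$, furnished by Theorem~\ref{thm:barrier-char}, together with its associated $d\in C(\itoverline{\Thetam})$. These control the past. For the future I would use the explicit \p-parabolic functions
\[
 f_j(x,t)= j \tfrac{p-1}{p} |x-x_0|^{p/(p-1)} +nj^{p-1} (t-t_0)
\]
from Lemma~\ref{lm-Omnibus}, which are nonnegative on $\{t\ge t_0\}$, vanish at $\xi_0$, and blow up on the part of $\bdy\Theta$ in $\{t\ge t_0\}$ away from $\xi_0$. Note also that the future part $\{(x,t)\in\Theta:t>t_0\}$ has empty past, so whenever $\xi_0$ lies on its boundary it is automatically regular with respect to it by Lemma~\ref{lm-Omnibus}.

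The heart of the matter is to build from $\{v_j\}$ a barrier family $\{w_j\}$ on all of $\Theta$, and the only delicate point is the behaviour across the slice $\{t=t_0\}$. The naive recipe ``$w_j=v_j$ for $t<t_0$ and $w_j=f_j$ for $t\ge t_0$'' fails: since $v_j$ blows up along the cut $\Theta\cap\{t=t_0\}\setminus\{\xi_0\}$ while $f_j$ stays finite there, the prospective future values lie below the past trace of $v_j$, and the comparison principle in Definition~\ref{def:superparabolic} breaks down just above the slice. Extending $v_j$ by $+\infty$ for $t>t_0$ is likewise illegitimate, as it violates the density requirement in that definition. The correct device is a causal, forward-in-time continuation: I would extend each $v_j$ across $\{t=t_0\}$ to a \p-superparabolic function $\hat v_j$ on $\Theta$, equal to $v_j$ for $t<t_0$ and, for $t>t_0$, obtained by solving the forward problem on $\{(x,t)\in\Theta:t>t_0\}$ with the trace of $v_j$ as initial data and suitably large lateral data; the requisite existence and the superparabolic gluing across the slice rest on Theorem~\ref{thm-cont-exist} (for boxes) and the pasting lemma (Lemma~\ref{lem-pasting}), with $f_j$ providing the explicit majorant that pins down nonnegativity and the blow-up at the future boundary.

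Finally I would verify the conditions of Definition~\ref{def-barrier} for the resulting $\{w_j\}$. Positivity and condition~\ref{cond-second-strong} follow from $v_{j(k)}\ge kd$ in the past and from $f_j$ controlling the future. The blow-up condition~\ref{cond-second-weak} on $\bdy\Theta$ splits into $t<t_0$, handled by the barrier property of $v_j$, and $t\ge t_0$, handled by $f_j$. The decisive point is condition~\ref{cond-third}, that $w_j\to0$ at $\xi_0$: from the past this holds because $v_j\to0$ there, and from the future because $\xi_0$ is regular with respect to $\{(x,t)\in\Theta:t>t_0\}$ (Lemma~\ref{lm-Omnibus}), whence the forward continuation $\hat v_j$ also tends to $0$ at $\xi_0$. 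I expect this forward continuation across the slice---producing a \p-superparabolic extension that still vanishes at $\xi_0$---to be the main obstacle, precisely because it is where the causal structure of the equation must be exploited and where the naive gluing fails.
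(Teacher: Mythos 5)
Your treatment of the forward implication (via Proposition~\ref{prop-restrict}) and the reduction to the case $\xi_0\in\bdy\Thetam$ (via Lemma~\ref{lm-Omnibus}) matches the paper, and you have correctly located the crux: the behaviour of a prospective barrier across the slice $\{t=t_0\}$. But the central step of your converse --- the ``causal, forward-in-time continuation'' $\hat v_j$ --- is asserted rather than constructed, and the two tools you invoke cannot deliver it. Theorem~\ref{thm-cont-exist} produces solutions only on Lipschitz cylinders with data continuous on the parabolic boundary, whereas your forward problem lives on the arbitrary open set $\Theta_\limplus=\{(x,t)\in\Theta:t>t_0\}$ with initial data equal to the trace of $v_j$, which is merely lower semicontinuous, in general unbounded (it must be large near $\bdy\Thetam\setm\{\xi_0\}$ if condition~\ref{cond-second-weak} is to survive), and possibly $+\infty$. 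The pasting lemma (Lemma~\ref{lem-pasting}) does not help either: it glues $\min\{u,v\}$ on an open $G\subset\Theta$ with a function $u$ that is already \p-superparabolic on \emph{all} of $\Theta$; it cannot merge two functions defined on complementary time slabs, and your gluing has no $\min$ structure. Finally, even granted the continuation, your argument that $\hat v_j\to 0$ at $\xi_0$ from the future overreaches: Lemma~\ref{lm-Omnibus} gives regularity of $\Theta_\limplus$ at $\xi_0$, but regularity (Definition~\ref{def:regular}) concerns \emph{continuous} boundary data, while your data (trace of $v_j$ plus ``suitably large lateral data'') is discontinuous and unbounded, so an additional squeezing/comparison argument --- requiring at least boundedness of the data and continuity at $\xi_0$ --- is needed and missing.

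The paper sidesteps the gluing altogether, and the device is worth internalising. Instead of extending barriers from $\Thetam$ forward, it defines the candidate barriers globally from the start: $w_j:=\lP\psi_j$ on $\Theta$, where $\psi_j$ are the explicit \p-subparabolic functions \eqref{eq-barrier-const}. These are \p-parabolic in all of $\Theta$ by \cite{KiLi96}, so superparabolicity across the slice is automatic, and $w_j\ge\psi_j$ yields conditions~\ref{cond-first} and~\ref{cond-second-weak}. The real work is then the limit at $\xi_0$, handled by two splitting facts from \cite{KiLi96}: the restriction of $w_j$ to $\Thetam$ is the upper Perron solution of $\psi_j$ there, hence tends to $0$ at $\xi_0$ by the assumed regularity of $\xi_0$ with respect to $\Thetam$; and the restriction of $w_j$ to $\Theta_\limplus$ is the lower Perron solution of \emph{bounded} data that is continuous at $\xi_0$ (namely $w_j$ on the slice, extended by $\psi_j$ on $\bdy\Theta$), hence tends to $0$ at $\xi_0$ because earliest points are regular (Lemma~\ref{lm-Omnibus}). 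In short: where you build the past object and try to push it forward, the paper builds the global object and reads off its past and future restrictions; that reversal of the order of operations --- together with the Perron-solution restriction theorems you would otherwise have to prove --- is exactly what fills the gap you yourself identified as the main obstacle.
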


\begin{proof}
Suppose that $\xi_0$ is a regular boundary point with respect to $\Theta$. 
Then by Proposition~\ref{prop-restrict}, either 
$\xi_0\notin\partial\Thetam$ 
or $\xi_0$ is also regular with respect to $\Thetam$.

To prove the converse, due to Lemma~\ref{lm-Omnibus}, we may assume that 
$\xi_0\in \partial \Thetam$ is a regular boundary point of $\Thetam$. Let
$w_j=\lP \psi_j$, where $\psi_j$ are as in 
\eqref{eq-barrier-const}.
As in \cite{KiLi96}, it can be shown that the restriction to $\Thetam$ of $w_j$ 
is the upper Perron solution of $\psi_j$ in $\Thetam$, and hence
\[
\lim_{\Thetam\ni \xi\to \xi_0} w_j(\xi)=\psi_j(\xi_0)=0.
\]
This also implies that $w_j$, extended by $\psi_j$ to $\bdry\Theta$, is continuous at
$\xi_0$.
Again following \cite{KiLi96}, it can be shown that the restriction to
$\Theta_\limplus=\{(x,t) \in \Theta : t > t_0\}$ of $w_j$ coincides with the lower Perron
solution of (the above extension) $w_j$ with respect to 
$\Theta_\limplus$.
Since earliest points are regular by Lemma~\ref{lm-Omnibus}, we can conclude from this that
also
\[
\lim_{\Theta\setm\Thetam\ni \xi\to \xi_0} w_j(\xi)=0,
\]
showing that $w_j$ form a barrier family in $\Theta$.
\end{proof}

The following characterisation is a bit similar
in flavour to our barrier characterisation, in that
it deduces regularity from properties
of a countable family.

\begin{prop}
Let $\xi_0 \in \bdy \Theta$
and let $d \in C(\bdy \Theta)$ be 
such that $d(\xi_0)=0$ and $d(\xi)>0$ for 
$\xi \in \bdy \Theta \setm \{\xi_0\}$.
Then $\xi_0$ is regular if and only if
\[
    \lim_{\Theta \ni \xi \to \xi_0} \uP (jd)(\xi)=0
    \quad \text{for } j=1,2,\ldots.
\]
\end{prop}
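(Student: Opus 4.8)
The forward implication is immediate from the definition of regularity: when $\xi_0$ is regular, applying Definition~\ref{def:regular} to the continuous function $f=jd$ gives $\lim_{\Theta \ni \xi \to \xi_0} \uP(jd)(\xi) = jd(\xi_0) = 0$ for every $j$, since $jd$ is continuous on the compact set $\bdy\Theta$ and vanishes at $\xi_0$.

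For the converse my plan is to verify the regularity condition \emph{directly} from Definition~\ref{def:regular}, rather than assembling the functions $\uP(jd)$ into a barrier family and invoking Theorem~\ref{thm:barrier-char}; the latter route would force me to check that $\uP(jd)$ is strictly positive and to establish a delicate lower bound on its boundary $\liminf$ at points away from $\xi_0$, both of which are awkward. The direct argument rests on two elementary properties of the upper Perron solution: \emph{monotonicity}, namely $f\le g$ on $\bdy\Theta$ implies $\uP f\le \uP g$ (because then $\UU_g\subseteq\UU_f$, so the infimum over $\UU_f$ is the smaller), and \emph{invariance under additive constants}, namely $\uP(f+c)=\uP f+c$ for $c\in\R$ (because $u\mapsto u+c$ is a bijection of $\UU_f$ onto $\UU_{f+c}$, using that adding a constant preserves \p-superparabolicity). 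I also use $\lP f\le\uP f$ and $\lP f=-\uP(-f)$, both already recorded.

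Fix a continuous $f\colon\bdy\Theta\to\R$ and $\eps>0$, and put $M=\sup_{\bdy\Theta}|f-f(\xi_0)|<\infty$. By continuity of $f$ there is $\delta>0$ with $|f(\xi)-f(\xi_0)|<\eps$ whenever $\xi\in\bdy\Theta$ and $|\xi-\xi_0|<\delta$. Since $d$ is continuous and strictly positive on the compact set $\{\xi\in\bdy\Theta:|\xi-\xi_0|\ge\delta\}$, it attains there a positive minimum $m_\delta>0$. Choosing an integer $j\ge M/m_\delta$, I claim that $f\le f(\xi_0)+\eps+jd$ on all of $\bdy\Theta$: for $|\xi-\xi_0|<\delta$ this holds because $jd\ge0$, and for $|\xi-\xi_0|\ge\delta$ because $jd(\xi)\ge jm_\delta\ge M\ge f(\xi)-f(\xi_0)$. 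Applying monotonicity and invariance under constants yields $\uP f\le f(\xi_0)+\eps+\uP(jd)$ in $\Theta$.

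Taking $\limsup_{\Theta\ni\zeta\to\xi_0}$ and using the hypothesis $\lim_{\zeta\to\xi_0}\uP(jd)(\zeta)=0$ gives $\limsup_{\zeta\to\xi_0}\uP f(\zeta)\le f(\xi_0)+\eps$. The same estimate applied to $-f$ gives $\uP(-f)\le -f(\xi_0)+\eps+\uP(j'd)$ for some large integer $j'$, whence $\uP f\ge\lP f=-\uP(-f)\ge f(\xi_0)-\eps-\uP(j'd)$, and therefore $\liminf_{\zeta\to\xi_0}\uP f(\zeta)\ge f(\xi_0)-\eps$. Letting $\eps\to0$ in both bounds gives $\lim_{\zeta\to\xi_0}\uP f(\zeta)=f(\xi_0)$, so $\xi_0$ is regular. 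The only point requiring care is that the index $j$ depends on $\eps$ through $\delta$; but since the hypothesis supplies the vanishing limit for \emph{every} fixed $j$, this causes no difficulty, and it is precisely the mild obstacle one sidesteps by not routing the proof through the barrier-family conditions.
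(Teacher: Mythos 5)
Your proposal is correct and follows essentially the same route as the paper's own proof: dominate $f$ by $f(\xi_0)+\eps+jd$ on $\bdy\Theta$, use monotonicity and additive-constant invariance of the upper Perron solution to get $\uP f\le f(\xi_0)+\eps+\uP(jd)$, and then apply the same estimate to $-f$ together with $\lP f=-\uP(-f)\le \uP f$ for the lower bound. The only difference is one of detail: you spell out the compactness argument (positive minimum of $d$ away from $\xi_0$, boundedness of $f$) behind the paper's one-line claim that such a $j$ exists, and you state explicitly the two elementary Perron-solution properties that the paper uses implicitly.
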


A typical example is $d(\xi)=|\xi-\xi_0|^\al$ with $\al>0$.
In the elliptic \p-harmonic case,
a similar characterisation was given
by Bj\"orn--Bj\"orn~\cite{BB}, 
Theorem~4.2 and Remarks~6.2 (which can
also be found as 
Theorem~11.2 and Remark~11.12 in \cite{BBbook}).
In the \p-harmonic case only the limit of $\uP d$ is required,
and the same is also true for the heat equation.
We do not know whether one limit is sufficient 
in the \p-parabolic case, 
and this issue seems closely related to the problem
of whether one barrier is sufficient.

A corresponding characterisation (with a family of Perron solutions)
was recently given
for the elliptic variable exponent $p(\cdot)$-harmonic
functions by Adamowicz--Bj\"orn--Bj\"orn~\cite{ABB}, Theorem~7.1.

\begin{proof}
The necessity is obvious.
For the sufficiency, let $f \in C(\bdy \Theta)$
and $\eps >0$.
Then we can find $j$ such that
$f < jd + f(\xi_0) + \eps$ on $\bdy \Theta$.
Thus,
\[
    \lim_{\Theta \ni \xi \to \xi_0} \uP f (\xi)
    \le f(\xi_0) + \eps + 
     \lim_{\Theta \ni \xi \to \xi_0} \uP (jd)(\xi)
     = f(\xi_0) +\eps.
\]
Letting $\eps \to 0$ shows that
$    \lim_{\Theta \ni \xi \to \xi_0} \uP f (\xi) \le f(\xi_0)$.
Applying this also to $-f$ yields 
that 
$    \lim_{\Theta \ni \xi \to \xi_0} \uP f (\xi) = f(\xi_0)$,
and thus $\xi_0$ is regular.
\end{proof}

%%%%%%%%%%%%%%%%%%%%%%%%%%%%%%%%%%%%%
\section{The exterior ball condition 
}\label{sect-ball-all-p}

In this section, we will show that if the domain satisfies 
an exterior ball condition, then it is regular. 
However, the ``south and north poles'' have to be excluded. 
To be more precise, the condition  $x_1\ne 0$ in 
Proposition~\ref{prop:exterior-ball-condition} below is meant to exclude 
both the south pole $(x_1,t_1-R_1)$ and the north pole $(x_1,t_1+R_1)$ of 
the exterior ball as a tangent point. The restriction on
the southern pole was already pointed out by Kilpel\"ainen and Lindqvist (see the comment at the end
of the proof of Theorem~6.2 in \cite{KiLi96}). Indeed, the top of
a cylindrical domain gives a natural counterexample.   
As for the north pole, when $x$ is close to $x_1=0$, $\Delta_p u$ is positive, and, curiously enough, the argument does not work.

Since the space is homogeneous and the \p-parabolic equation is translation
invariant, the geometric conditions implying regularity are the same at all points.
We therefore describe conditions for regularity of the origin from now on.

\begin{prop} \textup{(Exterior ball condition)}
\label{prop:exterior-ball-condition}
Let $\xi_0=(0,0)\in\partial\Theta$.
Suppose that there exists a  ball 
$B_1=B(\xi_1,R_1)$, 
with centre $\xi_1=(x_1,t_1)$ and radius $R_1$,
such that $B_1 \cap \Theta = \emptyset$ and
$\xi_0 \in \bdy B_1 \cap \bdy \Theta$.
If $x_1\ne 0$
then $\xi_0$ is regular with respect to $\Theta$.
\end{prop}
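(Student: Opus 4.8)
The plan is to produce a (strong) barrier family at $\xi_0$ and invoke Theorem~\ref{thm:barrier-char}. Since the equation is translation invariant we take $\xi_0=(0,0)$, and since $\abs{x_1}^2+t_1^2=R_1^2$ the hypothesis $x_1\ne0$ means exactly that $0<\abs{x_1}\le R_1$, i.e.\ the contact point is not a pole. By Proposition~\ref{prop-local} it suffices to work in $\Theta'=\Theta\cap B(\xi_0,R)$ for a small $R>0$; choosing $R$ small and using $x_1\ne0$ we arrange that the \emph{spatial} distance $\rho:=\abs{x-x_1}$ to the centre stays bounded below, say $\rho\ge\tfrac12\abs{x_1}>0$, on all of $\overline{\Theta'}$. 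This is the whole point of excluding the poles: it keeps the gradient of the barrier away from $0$, so that $\Delta_p$ does not degenerate (for $p>2$) or blow up (for $p<2$).

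The barrier itself is the space--time radial ansatz $w_j(z)=g_j(r)$, $r=\abs{z-\xi_1}$, with $g_j$ smooth and increasing and $g_j(R_1)=0$; then $w_j>0$ on $\Theta$ (where $r>R_1$) and $w_j\to0$ as $z\to\xi_0$ (where $r\to R_1$), so that positivity and condition~\ref{cond-third} come for free once superparabolicity is known. Writing $\mu:=\rho/r$, a direct computation gives
\[
\grad w_j=g_j'(r)\frac{x-x_1}{r},\qquad \partial_t w_j=g_j'(r)\frac{t-t_1}{r},
\]
and
\[
\Delta_p w_j=(g_j')^{p-2}\rho^{p-2}r^{1-p}\bigl\{(p-1)\mu^2 r\,g_j''+\bigl(n+p-2-(p-1)\mu^2\bigr)g_j'\bigr\}.
\]
Taking $g_j(r)=c_j(R_1^{-\ga}-r^{-\ga})$ with $\ga$ large collapses the brace to $c_j\ga r^{-\ga-1}\bigl(n+p-2-(p-1)\mu^2(\ga+2)\bigr)$; since $\mu$ is bounded below on $\overline{\Theta'}$, choosing $\ga$ large forces this factor negative, hence $\Delta_p w_j<0$ there. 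As $w_j$ is continuous and, because $\rho>0$, smooth on $\overline{\Theta'}$, checking $\partial_t w_j-\Delta_p w_j\ge0$ pointwise shows $w_j$ is a weak supersolution, and Lemma~\ref{lem-comp} then upgrades it to a \p-superparabolic function, giving \ref{cond-first}.

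For the amplification I would let $c_j\to\infty$ (equivalently, let $g_j$ increase pointwise to $+\infty$ on $(R_1,\infty)$): after localizing, the compact set $\overline{\Theta'}\cap\bdy\Theta\setm B(\xi_0,1/k)$ lies at distance $\ge c_k>0$ from $\bdy B_1$, so $w_j\ge g_j(R_1+c_k)\to\infty$ there, yielding \ref{cond-second-weak}. A purely radial $w_j$ vanishes on all of $\bdy B_1$, so to manufacture the function $d$ of \ref{cond-second-strong} (which must vanish only at $\xi_0$) I would either paste $w_j$, via Lemma~\ref{lem-pasting}, with the ``future'' barrier of Lemma~\ref{lm-Omnibus}, or simply note that after shrinking $R$ the only contact of $\overline{B_1}$ with $\overline{\Theta}$ in $B(\xi_0,R)$ is at $\xi_0$, so that $w_j>0$ on $\bdy\Theta'\setm\{\xi_0\}$.

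The main obstacle is the sign of $\partial_t w_j$. When the contact lies on the ``southern'' part of $\bdy B_1$, i.e.\ $t_1>0$, one has $(t-t_1)/r<0$ near $\xi_0$, so $\partial_t w_j<0$ and superparabolicity forces $\abs{\partial_t w_j}\le\abs{\Delta_p w_j}$: the negativity produced by the concavity term must dominate the unfavourable time derivative. Making this domination quantitative is the crux, and it is exactly here that the poles must be excluded: as $x_1\to0$ the factor $\mu^2$ multiplying $g_j''$ tends to $0$ while the prefactor $\rho^{p-2}$ degenerates ($p>2$) or blows up ($p<2$, where $\Delta_p w_j$ becomes large of the wrong sign), so the estimate breaks. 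The condition $x_1\ne0$, enforced through $\rho\ge\tfrac12\abs{x_1}$ on $\Theta'$, is precisely what keeps the argument under control across both the degenerate and the singular range.
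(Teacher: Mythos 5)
Your computation of $\Delta_p w_j$ for the space--time radial ansatz is correct, and your localisation $\rho\ge\tfrac12\abs{x_1}$ is the same use of $x_1\ne0$ as in the paper; nevertheless the proposal has two genuine gaps, one geometric and one analytic.

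First, a barrier that is radial about $\xi_1$ and vanishes on $\bdy B_1$ cannot in general satisfy condition \ref{cond-second-weak} of Definition~\ref{def-barrier}. The hypothesis only says that the \emph{open} ball $B_1$ misses $\Theta$; the sphere $\bdy B_1$ may contain many points of $\bdy\Theta$ besides $\xi_0$. Take for instance $\Theta=B(\xi_0,1)\setm\overline{B}_1$: then every point of $\bdy B_1\cap B(\xi_0,1)$ lies on $\bdy\Theta$, so your claim that $\overline{\Theta'}\cap\bdy\Theta\setm B(\xi_0,1/k)$ lies at positive distance from $\bdy B_1$ is false (and shrinking $R$ does not help, contrary to your second suggestion for producing $d$), and at any such boundary point $\xi$ one has $\liminf_{\Theta'\ni\zeta\to\xi}w_j(\zeta)=g_j(R_1)=0$, not $\ge k$. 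This is precisely why the paper does not build its barrier on $B_1$ itself: it passes to the internally tangent half-ball $B(\xi_2,R_2)$ with $\xi_2=\tfrac12\xi_1$ and $R_2=\tfrac12 R_1$, whose closure meets $\overline{\Theta}$ only at $\xi_0$, and then proves via the cosine theorem the quantitative growth bound $R^2-R_2^2\ge\tfrac12\abs{\xi}^2$, where $R=\abs{\xi-\xi_2}$, which is exactly what makes condition \ref{cond-second-weak} checkable.

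Second, your amplification scheme (fixed steepness $\ga$, amplitude $c_j\to\infty$) provably fails in the singular range $1<p<2$, exactly at the step you defer as ``the crux''. Writing $w_j=c_jv$ with $v=R_1^{-\ga}-r^{-\ga}$ fixed, one has $\Delta_p w_j=c_j^{p-1}\Delta_p v$ but $\partial_t w_j=c_j\,\partial_t v$, so superparabolicity amounts to $c_j^{p-2}\Delta_p v\le\partial_t v$. Near a southern contact ($t_1>0$) one has $\partial_t v<0$, and for $p<2$ the factor $c_j^{p-2}\to0$, so the inequality is violated once $c_j$ is large: pure amplitude scaling is incompatible with the supersolution property in the singular case (for $p>2$ it is harmless, since $c_j^{p-2}\to\infty$). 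The paper's exponential barriers $\gamma(e^{-jR_2^2}-e^{-jR^2})$ resolve this by coupling amplitude and steepness: increasing $j$ steepens the profile and makes $\Delta_p$ more negative relative to $\partial_t$, so that the admissibility constraint $\gamma^{p-2}\ge C_1j^{1-p}e^{j(p-2)R^2}$ --- which for $p<2$ is an \emph{upper} bound on $\gamma$ --- still allows the effective height $\gamma e^{-jR_2^2}\sim j^{(p-1)/(2-p)}$ to tend to infinity. Your power-type profile could plausibly be repaired in the same spirit by letting $\ga=\ga_j\to\infty$ jointly with $c_j$, but as written the argument does not close for $1<p<2$, and the first gap affects all $p\ne2$.
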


\begin{proof} 
Since the case $p=2$ is classical, we assume that $p \ne 2$.
We follow the ideas introduced in Kilpel\"ainen--Lindqvist~\cite{KiLi96}. 

Let $\xi_2=(x_2,t_2)=\frac{1}{2}\xi_1$ and $R_2=\frac{1}{2}R_1$.
Note that $R_1=|\xi_1|$ and $R_2=|\xi_2|$.
Let $\de=\frac{1}{2}|x_2|>0$ and $\Theta_0=\Theta \cap B(\xi_0,\de)$.
Here we have used that $x_1 \ne 0$.

Let $\xi=(x,t) \in \overline{\Theta}_0$
and $R=|\xi-\xi_2|\le  2R_2$.
Also let $j_0 \ge (n+p-2)/(p-1)\delta^2$  
and $j\ge j_0$ be integers.
Define
\[
w_j(\xi) = \gamma (e^{-j R_2^2}-e^{-j R^2}),
\]
where
$\gamma=\gamma(j)>0$ will be chosen later. 
As in \cite{KiLi96}, easy calculations yield
\[
\partial_t w_j = 2j \gamma e^{-j R^2} (t-t_2)
             \ge -4j \gamma R_2 e^{-j R^2}
\]
and 
\begin{align*}
\Delta_p w_j
&= (2j\gamma  )^{p-1} |x-x_2|^{p-2} e^{-j(p-1)R^2} [n+p-2-2j(p-1)|x-x_2|^2]\\
&\le (2j\gamma  )^{p-1} |x-x_2|^{p-2} e^{-j(p-1)R^2} [n+p-2-2j(p-1)\de^2].
\end{align*}
The choice of $j$ implies that $n+p-2-2j(p-1)\de^2 \le -j(p-1)\de^2$.
Since $\de\le|x-x_2|\le R\le 2R_2$, this gives
\begin{align*}
\Delta_p w_j
&\le - (2j\gamma  )^{p-1} |x-x_2|^{p-2} j(p-1) \de^2
          e^{-j(p-1)R^2}\\
&\le - C_0 (2j\gamma  )^{p-1} j (p-1) e^{-j(p-1)R^2},
\end{align*}
where 
\[
C_0=\begin{cases}
  (2R_2)^{p-2}\de^2, &  1<p<2, \\
  \de^p, & p>2.
  \end{cases}
\]
In order to have
$
\Delta_p w_j  \le \partial_t w_j
$
it is enough to verify that
\[
4j \gamma  R_2 e^{-j R^2}
   \le C_0 (2j\gamma )^{p-1} j (p-1) e^{-j(p-1)R^2},
\]
which is equivalent to
\begin{equation}
\gamma ^{p-2} \ge \frac{j^{1-p}R_2 e^{j(p-2)R^2}}{2^{p-3}(p-1) C_0}
=: C_1 j^{1-p} e^{j(p-2)R^2},
\label{eq-w-la-superp}
\end{equation}
where $C_1=R_2/2^{p-3}(p-1)C_0$.
Choose now
\begin{equation}
\gamma =\gamma (j) = \begin{cases} 
           (C_1 j^{1-p})^{1/(p-2)} e^{j R_2^2},  &1<p<2, \\
           (C_1 j^{1-p})^{1/(p-2)} e^{4j R_2^2}, &p>2.
                 \end{cases} 
\label{eq-choose-la}
\end{equation}
As $B(\xi_2,R_2) \cap \Theta_0$ is empty, we  have that $R_2\le R\le 2R_2$, 
which shows that \eqref{eq-w-la-superp} holds.
Therefore, $w_j$ is \p-superparabolic in $\Theta_0$.

We next want to show that $w_j$ satisfies \ref{cond-second-weak} of Definition~\ref{def-barrier},
and thus that $\{w_j\}_{j=j_0}^\infty$ is a barrier family,
condition \ref{cond-third} being immediate.

Let $\beta$ be the angle
between the vectors $-\xi_1$ and $\xi-\xi_1$,
$r_0=|\xi|$ and $r_1=|\xi-\xi_1| \ge R_1$.
The cosine theorem yields
$
  r_0^2 = r_1^2 + R_1^2 - 2r_1R_1\cos\beta.
$
Together with the cosine theorem again and the inequality $r_1 \ge R_1$
 this yields
\begin{align*}
   R^2-R_2^2 & = \bigl(\tfrac{1}{2} R_1\bigr)^2 + r_1^2 -r_1 R_1 \cos \beta 
                  -\bigl(\tfrac{1}{2} R_1\bigr)^2\\
       & = r_1^2 -\tfrac{1}{2}(r_1^2+ R_1^2 - r_0^2) 
        = \tfrac{1}{2} r_1^2 - \tfrac{1}{2} R_1^2 +\tfrac{1}{2} r_0^2
        \ge \tfrac{1}{2} r_0^2.
\end{align*}
It follows that for $\xi\in\overline{\Theta}_0 \setm B(\xi_0,r)$,
\begin{align*}
w_j(\xi) &= \gamma  e^{-j R_2^2} (1-e^{j (R_2^2-R^2)})
\ge \gamma  e^{-j R_2^2} (1-e^{-j r^2/2}).
\end{align*}
Inserting the expression \eqref{eq-choose-la} for $\gamma $ we obtain
\begin{align*}
w_j(\xi) &\ge  \begin{cases} 
           (C_1 j^{1-p})^{1/(p-2)} (1-e^{-j r^2/2}),  &1<p<2, \\
           (C_1 j^{1-p})^{1/(p-2)} e^{3j R_2^2}(1-e^{-j r^2/2}), &p>2.
                  \end{cases} 
\end{align*}
For a fixed $r$, the right-hand sides tend to $\infty$, as $j\to\infty$,
showing that $\{w_j\}_{j=j_0}^\infty$ is a 
barrier family with respect to $\Theta_0$.
Thus, by Theorem~\ref{thm:barrier-char},
$\xi_0=(0,0)$ is regular with respect to $\Theta_0$,
and hence, by Proposition~\ref{prop-local},
it is regular with respect to $\Theta$.
\end{proof}

%%%%%%%%%%%%%%%%%%%%%%%%%%%%%%%%%%%%%%%%%%%%%%%%%
\subsection{Regularity at the ``north pole''} %\label{S:origin}

The proof above for the exterior ball condition does not work
when $\xi_0=(0,0)$ is the north pole of the ball. Here we discuss some simple
sufficient conditions for the regularity of $\xi_0$, 
when it can be considered as some
sort of north pole for proper sets touching $\partial\Theta$ from below.

Observe that if we have a flat bottom, i.e.\ the half-space $\{(x,t): t<0\}
\subset \R^{n+1} \setm \Theta$, then Lemma~\ref{lm-Omnibus} 
gives the regularity.

\begin{prop}\label{prop:northpole-p>2}
Let $\Theta\subset\Rno$ be an open set and $(0,0) \in \bdy \Theta$.
Assume that for some $\theta>0$, 
\[
    \Theta \subset   \{(x,t): t>-\theta \abs{x}^l\},
\]
where $l\ge p/(p-1)$ if $1<p<2$, and $l>p$ if $p>2$.
Then $(0,0)$  is regular with respect to $\Theta$.
\end{prop}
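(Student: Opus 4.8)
The plan is to construct an explicit strong barrier family at $\xi_0=(0,0)$ and then invoke Theorem~\ref{thm:barrier-char} together with the locality result in Proposition~\ref{prop-local}. The geometric hypothesis says that $\Theta$ lies above the graph $t=-\theta|x|^l$, so near the origin every boundary point other than $\xi_0$ is bounded away from $\xi_0$ in a controlled way. The natural guess, modelled on the barriers $f_j$ in Lemma~\ref{lm-Omnibus} and on the computation in Theorem~\ref{thm-ell-reg-iff-par-reg}, is to try functions of the form
\[
  w_j(x,t) = A_j |x|^{p/(p-1)} + B_j\, t + c_j,
\]
or more flexibly $w_j(x,t)=\phi_j(|x|) + \psi_j(t)$, chosen so that $w_j>0$ on $\Theta$, $w_j(\xi_0)=0$, and $\partial_t w_j - \Delta_p w_j \ge 0$ (so that $w_j$ is a weak supersolution, hence \p-superparabolic by Lemma~\ref{lem-comp}). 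Since $\Delta_p(|x|^{p/(p-1)}) = (p/(p-1))^{p-1}(n+p-2)/(p-1)\cdot$(a constant, homogeneous of degree $0$), the spatial part contributes a constant to $\Delta_p w_j$, and the supersolution inequality reduces to a comparison of constants that can be arranged by taking $B_j$ large.

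The key steps, in order, would be: first, fix a small ball $B=B(\xi_0,\rho)$ and work in $B\cap\Theta$, since by Proposition~\ref{prop-local} regularity there implies regularity with respect to $\Theta$. Second, define candidate barriers adapted to the exponent $l$; the exponent condition ($l\ge p/(p-1)$ for $1<p<2$ and $l>p$ for $p>2$) is precisely what is needed so that the positivity $w_j>0$ on $\Theta$ can be read off from the constraint $t>-\theta|x|^l$: one wants the time-term $B_j t$ not to overwhelm the spatial term $A_j|x|^{p/(p-1)}$ when $t$ is as negative as $-\theta|x|^l$ allows. Concretely, on the worst-case parabola the value $A_j|x|^{p/(p-1)} - B_j\theta|x|^l$ must stay positive for small $|x|$, which forces a compatibility between the exponents $p/(p-1)$ and $l$ and explains the two regimes. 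Third, verify the supersolution inequality $\partial_t w_j - \Delta_p w_j\ge 0$, which pins down how large $B_j$ must be relative to $A_j$. Fourth, check the barrier-family conditions of Definition~\ref{def-barrier}: \ref{cond-third} that $w_j(\zeta)\to 0$ as $\zeta\to\xi_0$, which is automatic from $w_j(\xi_0)=0$ and continuity, and \ref{cond-second-weak}/\ref{cond-second-strong} that $w_j$ blows up (uniformly, as $j\to\infty$) on the part of $\bdy\Theta$ away from $\xi_0$, which follows because on $|\xi-\xi_0|\ge 1/k$ the spatial term $A_j|x|^{p/(p-1)}$ is bounded below while the negative time-contribution is controlled.

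The main obstacle I expect is getting the exponents to cooperate across both the positivity requirement and the supersolution requirement \emph{simultaneously}, and in particular handling the two cases $1<p<2$ and $p>2$ with a single family or two parallel families. The difficulty is that $\Delta_p$ of a pure power is homogeneous of degree $(p/(p-1)-1)(p-1)=$ a fixed degree, so the supersolution inequality is a balance of homogeneous terms, but the positivity constraint lives on the curve $t=-\theta|x|^l$ with a \emph{different} exponent $l$; reconciling these is exactly where the sharp thresholds $l\ge p/(p-1)$ and $l>p$ come from, and one must be careful that the strict versus non-strict inequalities are placed correctly (the degenerate case $p>2$ being more delicate, hence the strict $l>p$). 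A secondary subtlety is ensuring $w_j$ is a \emph{strong} barrier family, i.e.\ exhibiting the dominating function $d\in C(\overline{\Theta})$ with $w_{j(k)}\ge k d$; here one can take $d(x,t)=|x|^{p/(p-1)}+|t|$ (suitably truncated) and check the inequality, though by Theorem~\ref{thm:barrier-char} it in fact suffices to produce an ordinary barrier family, so I would aim for the weaker conditions \ref{cond-first}--\ref{cond-second-weak} and let the theorem upgrade them.
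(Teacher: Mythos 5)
Your ansatz is exactly the paper's: the functions $f_j(x,t)=j\frac{p-1}{p}|x|^{p/(p-1)}+nj^{p-1}t$ are \p-parabolic, and your $w_j=A_j|x|^{p/(p-1)}+B_jt+c_j$ reduces to them (note that condition~\ref{cond-third} of Definition~\ref{def-barrier} forces $c_j=0$, since $w_j$ must tend to $0$ at $\xi_0$). For $1<p<2$ your plan does work in a fixed ball: the supersolution inequality only requires $B_j\gtrsim A_j^{p-1}$, which for $p<2$ grows \emph{slower} than $A_j$, so on the worst-case curve $t=-\theta|x|^l$ one gets $A_j|x|^{p/(p-1)}-\theta B_j|x|^l\ge(A_j-\theta B_j)|x|^{p/(p-1)}>0$ for $|x|\le1$ and $j$ large, using $l\ge p/(p-1)$. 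But for $p>2$ there is a genuine gap in the step ``fix a small ball $B(\xi_0,\rho)$ and make $w_j>0$ there'': the supersolution inequality forces $B_j\ge nA_j^{p-1}\gg A_j$, and positivity of $A_j|x|^{p/(p-1)}-\theta B_j|x|^l$ then requires $|x|^{p/(p-1)-l}\gtrsim\theta A_j^{p-2}$, i.e.\ $|x|\lesssim A_j^{-(p-2)/(l-p/(p-1))}$. At the fixed scale $|x|\sim\rho$ this fails as soon as $j$ is large, \emph{no matter what} $l$ is; so for $p>2$ no choice of $A_j,B_j$ makes the whole family positive in $B(\xi_0,\rho)\cap\Theta$ (in the worst case $\Theta$ fills the region above the curve), and in particular a fixed-scale positivity analysis cannot produce the threshold $l>p$.

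The missing idea is a $j$-dependent localisation combined with the pasting lemma. The paper first reduces, via Propositions~\ref{prop-restrict}, \ref{prop-local} and Theorem~\ref{thm-Omminus}, to $\Theta=\{(x,t):t>-\theta|x|^l \text{ and } -1<t<0\}$, and then works in shrinking boxes $G^j=\{(x,t):|x|<j^{-1/k},\ -\theta j^{-l/k}<t<0\}$, on which positivity of $f_j$ \emph{is} achievable; it sets $m_j:=\inf_{\Theta\cap\bdy G^j}f_j$ and pastes, $h_j=\min\{f_j,m_j\}$ in $G^j$ and $h_j=m_j$ in $\Theta\setm G^j$. By Lemma~\ref{lem-pasting} each $h_j$ is a positive continuous \p-superparabolic function, and $\{h_j\}$ is a strong barrier family because $m_j\to\infty$ while $G^j$ shrinks to $\xi_0$. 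The requirement $m_j\to\infty$ amounts to $k>p/(p-1)$ together with $l>p/(p-1)+k(p-2)$; for $1<p<2$ the second inequality is automatic (as $p-2<0$), while for $p>2$ it forces $k$ to be taken close to $p/(p-1)$, and letting $k\downarrow p/(p-1)$ turns it into exactly the hypothesis $l>p$. So the two regimes in the statement are not thresholds of a fixed-scale positivity argument, but arise from balancing the shrinking rate of $G^j$ against the growth rates $j$ and $j^{p-1}$ of the two terms of $f_j$; this balancing step, and the pasting that makes the locally defined $f_j$ into a barrier on all of $\Theta$, are what your proposal lacks.
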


\begin{proof}
By  Propositions~\ref{prop-restrict} and~\ref{prop-local} and
Theorem~\ref{thm-Omminus}
we may assume that
\[
   \Theta =   \{(x,t): t>-\theta\abs{x}^l \text{ and } -1 <t<0\}.
\]
We localise the problem by considering
\[
G^j=\biggl\{(x,t): |x|< \frac{1}{j^{1/k}}
     \text{ and } -\frac{\theta}{j^{l/k}}< t< 0\biggr\} 
\subset \Theta,
\]
where $k>0$ will be fixed later. Notice that
the set $G^j$ gets smaller, as $j$ grows.
Now let 
\begin{equation*} 
  f_j(x,t)= j \frac{p-1}{p} |x|^{p/(p-1)} +nj^{p-1} t,
\end{equation*}
which is \p-parabolic in $\R^{n+1}$. 
If $1<p<2$, then $f_j$ is positive in $G^j\cap\Theta$, provided $l\ge p/(p-1)$ and $j$ is large enough. If $p>2$, then $f_j$ is positive in $G^j\cap\Theta$, provided
$l>p/(p-1)+k(p-2)$ and $j$ is large enough.
Define
\[
   m_j:=\inf_{\Theta \cap \bdy G^j} f_j
      = j \frac{p-1}{p} \biggl(\frac{1}{j^{1/k}}\biggr)^{p/(p-1)} 
               -nj^{p-1} \frac{\theta}{j^{l/k}}.
\]
We want $m_j \to \infty$ as $j \to \infty$,
and this happens if 
\[
1-\frac{p}{(p-1)k} >0 \quad \text{and} \quad
  1-\frac{p}{(p-1)k} > p-1-\frac{l}{k}.
\]
The first condition is satisfied if $k>p/(p-1)$, and
the second condition is satisfied if  $l>p/(p-1)+k(p-2)$. 
This holds for $1<p<2$, $l\ge p/{(p-1)}$ and all $k>p/(p-1)$,
whereas for $p\ge2$ and $l>p$, this is true, provided we choose $k$ sufficiently 
close to $p/(p-1)$. 

Now let
\[
    h_j=\begin{cases}
             \min\{f_j,m_j\} & \text{in } G^j, \\
              m_j & \text{in } \Theta \setm G^j.
              \end{cases}
\]
Since $f_j$ is \p-parabolic in $\R^{n+1}$, the pasting lemma 
(Lemma~\ref{lem-pasting}), applied to $m_j$ and $f_j|_{G^j}$, shows that
$h_j$ is a positive (continuous) \p-superparabolic function
in $\Theta$, if $j$ is large enough.
As $m_j \to \infty$ and $G^j$ shrinks to $(0,0)$,
it follows that $\{h_j\}_{j=m}^\infty$ is a strong barrier family 
if $m$ is large enough,
and thus  
$(0,0)$ is regular with respect to $\Theta$.   
\end{proof}

\section{Exterior cone conditions}\label{S:cone}

Let us consider $\xi_0 \in \bdy \Theta$.
Without loss of generality we assume that $\xi_0=(0,0)$.
Assume that there is an open cone $C$
in the exterior $\R^{n+1} \setm \Theta$ with vertex at $\xi_0$.
When dealing with a space-time cylindrical domain, it is well-known that the 
nonparabolic part of the boundary is irregular; therefore, 
it is necessary that the cone $C$ contains some point $(x_C,t_C)$ with 
$t_C \le 0$
(or equivalently a point with $t_C < 0$) in order to have
a chance of implying the regularity of $\xi_0$.

As regularity is a local property by Proposition~\ref{prop-local},
we assume that the full cone is in the exterior, and we may therefore
assume that $|(x_C,t_C)|=1$.
Since $C$ is open, we can find an open cylindrical subcone $C_0$ with
vertex $\xi_0$ containing $(x_C,t_C)$ and such that
$\overline{\Theta} \cap \itoverline{C}_0 = \{\xi_0\}$.

To work out concrete examples to begin with, we show in 
Proposition~\ref{prop-cone-1+1} 
that the exterior cone condition in $\R^{1+1}$ 
yields regularity for $1<p<\infty$. 
We then use this to obtain a more general criterion
ensuring regularity in the $1+1$-dimensional case, see
Theorem~\ref{thm-1+1-gen}.
Then in Section~\ref{sect-gen-cone}, we focus on the higher-dimensional case.

\subsection{Exterior cone condition in 
  \texorpdfstring{$\R^{1+1}$}{} for  \texorpdfstring{$1<p<\infty$}{}}
\label{sec:1+1}

\begin{prop} \label{prop-cone-1+1}
\textup{(Exterior cone condition in $1+1$ dimensions)}
Let $\Theta \subset \R^{1+1}$ be bounded and $\xi_0=(0,0)\in\partial\Theta$.
Assume that there is an open cone $C \subset \R^{1+1} \setm \Theta$
with $\xi_0$ as vertex, and  a point $\xi=(x_C,t_C) \in C$ 
with $t_C \le 0$.
Then $\xi_0$ is regular.
\end{prop}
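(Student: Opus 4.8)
The plan is to produce a barrier family at $\xi_0=(0,0)$ and conclude by Theorem~\ref{thm:barrier-char}. First I would trim the geometry. Since regularity is local, Proposition~\ref{prop-local} lets me intersect $\Theta$ with an arbitrarily small ball about $\xi_0$. Since the future is irrelevant, Theorem~\ref{thm-Omminus} reduces matters to proving $\xi_0$ regular with respect to $\Thetam=\{(x,t)\in\Theta:t<0\}$ (if $\Thetam=\emptyset$ we are done by Lemma~\ref{lm-Omnibus}). Finally, as in the discussion preceding the proposition, I replace $C$ by a solid subcone $C_0$ with vertex $\xi_0$, still containing a direction $(x_C,t_C)$ with $t_C<0$ and with $\itoverline\Theta\cap\itoverline{C_0}=\{\xi_0\}$. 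Writing the axis of $C_0$ as the line $\{x=\mu t\}$, $\mu=x_C/t_C$, its downward ray lies in the interior of $C_0$; hence---and this is the geometric fact I lean on---$\Thetam$ stays a fixed angular distance from this line near $\xi_0$, so that $\abs{x-\mu t}\ge c\,\abs{(x,t)}$ on $\Thetam$ for some $c>0$.

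Guided by the fact that $\abs{x}$ solves the equation away from the axis $\{x=0\}$, I would try the tilted sublinear profile
\[
  w_j(x,t)=j\,\abs{x-\mu t}^{s},\qquad 0<s<1,\ \ j=1,2,\dots .
\]
Because the downward axis ray is interior to $C_0$, each $w_j$ is smooth and positive on $\Thetam$, tends to $0$ at $\xi_0$, and, since $\abs{x-\mu t}$ is bounded below on $\{\abs{(x,t)}\ge\eps\}\cap\bdy\Thetam$, it meets the blow-up condition~\ref{cond-second-weak} of Definition~\ref{def-barrier} once $j=j(k)$ is large. The only real point is \p-superparabolicity. Off the axis $w_j$ depends only on $\xi:=x-\mu t$, and a one-variable computation gives
\[
  \partial_t w_j-\Delta_p w_j
   = -\,js\mu\,\abs{\xi}^{s-1}\operatorname{sgn}\xi
     +(js)^{p-1}(1-s)(p-1)\,\abs{\xi}^{(s-1)(p-1)-1}.
\]
The second (concave) term is positive since $s<1$; the first is the drift caused by the tilt. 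Where $\mu\operatorname{sgn}\xi\le0$ both terms are nonnegative, so $w_j$ is a weak supersolution and, being continuous, is \p-superparabolic by the remark after Lemma~\ref{lem-comp}; in particular, for an untilted cone ($\mu=0$, the symmetric case already settled by $j\abs{x}$) this holds on all of $\Thetam$ for every $p$ and every $j$.

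The crux is the side $\mu\operatorname{sgn}\xi>0$, where superparabolicity reduces to $(js)^{p-2}(1-s)(p-1)\abs{\xi}^{(s-1)(p-2)-1}\ge\abs{\mu}$, whose exponent of $\abs{\xi}$ is negative for every $p>1$; thus the inequality always holds near the vertex, and only the outer scale $\abs{\xi}\approx\rho_0$ (the localisation radius) is in question. For $p>2$ the prefactor $(js)^{p-2}\to\infty$, so large $j$ covers the whole bounded domain; for $p=2$ barriers scale freely, so localising with $\rho_0\le(1-s)/\abs{\mu}$ makes every $w_j$ a supersolution. In both cases $\xi_0$ is regular with respect to $\Thetam$, hence with respect to $\Theta$. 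The delicate range is $1<p<2$, where $(js)^{p-2}\to0$, so the large $j$ forced by the blow-up condition destroys superparabolicity at scale $\rho_0$. Here I would replace $j\abs{x-\mu t}^{s}$ by its truncation
\[
  w_j=\min\bigl\{\,j\,\abs{x-\mu t}^{s},\ M_j\,\bigr\},
\]
which by the pasting lemma (Lemma~\ref{lem-pasting}) is \p-superparabolic provided the cap $M_j$ is attained within the radius $\xi^*(j)$ up to which $j\abs{x-\mu t}^{s}$ remains a supersolution; the computation shows $\xi^*(j)\to0$ only polynomially, so $j\,\xi^*(j)^{s}\to\infty$, and choosing $M_j$ between $k$ and $j\,\xi^*(j)^{s}$ keeps $w_j$ simultaneously \p-superparabolic on all of $\Thetam$ and large enough for the blow-up condition. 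Carrying out this balancing---and verifying that the truncated family still vanishes at $\xi_0$ and blows up on $\bdy\Thetam$, splitting off the unfavourable component with Lemma~\ref{lem-component} if convenient and checking the supersolution property through Lemma~\ref{lem-comp}---is the step I expect to require the most care, and it is what makes the singular case go through for all $1<p<\infty$.
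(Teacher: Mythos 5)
Your argument is correct, but it takes a genuinely different route from the paper's. The paper distinguishes two cases: for a \emph{horizontal} cone ($t_C=0$) it builds bounded exponential barriers $u_{\mu,\alp}=\mu(1-e^{-\alp(|t|-\ga x)})$ out of the quantity $|t|-\ga x$, which is positive off the cone, preserving \p-superparabolicity by tuning $\mu$ when $p>2$ and the rate $\alp=\alp(\mu)$ when $1<p<2$; for a \emph{downwards} cone ($t_C<0$) it splits $\Theta$ into the parts to the left and to the right of the cone, applies the horizontal case to each, and reassembles with Lemma~\ref{lem-component}. You go in the opposite direction: since $C$ is open you may always assume $t_C<0$, and then a single family of tilted power barriers $j|x-\mu t|^s$, $0<s<1$, handles everything with no component splitting; the price is the geometric estimate $|x-\mu t|\ge c|(x,t)|$ on $\Thetam$, which does hold (points of $\Thetam$ avoid the solid cone around the downward ray and, having $t\le 0$, also stay at a positive angle from the upward ray of the axis). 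Your one-variable computation of $\partial_t w_j-\Delta_p w_j$ is right, and your treatment of the singular range -- truncation plus the pasting lemma instead of the paper's re-tuning of $\alp$ -- does close: from $\xi^*(j)=\bigl((js)^{p-2}(1-s)(p-1)/|\mu|\bigr)^{1/(1-(1-s)(2-p))}$ one gets $j\,\xi^*(j)^s\sim C\,j^{(p-1)/(1-(1-s)(2-p))}\to\infty$, the exponent being positive precisely because $p>1$, so $M_j$ can be sent to infinity while each truncated $w_j$ stays \p-superparabolic, and conditions \ref{cond-third} and \ref{cond-second-weak} of Definition~\ref{def-barrier} then follow from the angular estimate. (Two harmless inaccuracies: for $p=2$ the localisation radius should be $(1-s)/|\mu|(1+|\mu|)$, since $|x-\mu t|\le(1+|\mu|)\,|(x,t)|$; and for $p>2$, as in the paper, one keeps only the tail $j\ge j_0$ of the family.) What each approach buys: the paper's barriers are explicit and bounded, each case reducing to a one-variable computation in $|t|-\ga x$ with no geometric lemma required; yours is unified across cone positions and across all $1<p<\infty$, uses the simplest conceivable profile, and exposes the pleasant fact that the truncation balance works out exactly when $p>1$.
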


Case~1 below follows from the exterior ball condition or from
Proposition~\ref{prop-horiz}, and so only Case~2 is actually needed. 
However, because Case~1 is more elementary here and provides concrete
examples of simple barrier families, we have decided to include it here.

\begin{proof} 
We may assume that the cone $C$
is a full cylindrical cone, that $\Theta \subset B(\xi_0,1)$
and that $\itoverline{C} \cap \overline{\Theta}= \{\xi_0\}$.
By Theorem~\ref{thm-Omminus}, we may also assume that
$\Theta \subset \{(x,t) : t<0\}$.

\medskip
\emph{Case} 1. \emph{Horizontal cone, i.e.\ $t_C=0$, see Figure~\ref{Fig:1}.}
We may assume that $x_C=1$, and thus
\[
     C         = \{(x,t) : |t| < \ga  x\}.
\]
where $\ga$ is a positive parameter. 
\begin{figure}[t]
\psfrag{x}{$x$}
\psfrag{t}{$t$}
\psfrag{cone}{$|t|=\ga x$}
\psfrag{inner}{$|t|<\ga x$}
\psfrag{outer}{$|t|>\ga x$}
\psfrag{boundary}{$\partial\Theta$}
\begin{center}
\includegraphics[width=.4\textwidth]{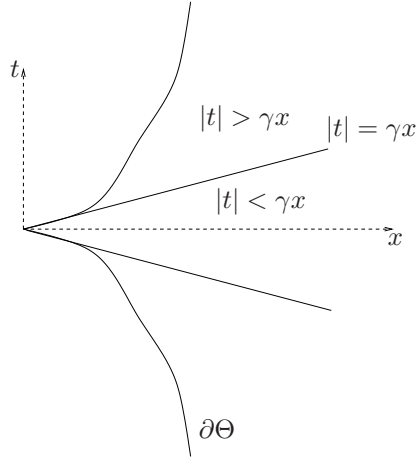}
\end{center}
\caption{The exterior cone condition with a horizontal cone.}\label{Fig:1}
\end{figure}
Let
\[
      u_{\mu,\alp}(x,t)=\mu (1-e^{-\alp(|t|-\ga x)}),
\]
where $\mu$ and $\alp$ are positive numbers.
We want $u_{\mu,\alp}$ to be \p-super\-pa\-ra\-bolic in $\Theta$.
We see that
\[
         \partial_x {u_{\mu,\alp}}= -\mu \alp\ga e^{-\alp(|t|-\ga x)},
\]
and hence
\begin{align*}
 \Delta_p  u_{\mu,\alp}
   & = - (\mu \alp\ga)^{p-1} \parts{}{x}  e^{-(p-1)\alp(|t|-\ga x)} \\
   & = - (\mu \alp\ga)^{p-1} (p-1)\alp\ga  e^{-(p-1)\alp(|t|-\ga x)}
   < 0\quad \text{in } \Theta.
\end{align*}
On the other hand,
\[
   \partial_t{u_{\mu,\alp}}= -\mu \alp e^{-\alp(|t|-\ga x)}
   \quad \text{in } \Theta,
\]
since $t<0$.
Thus, we have $\Delta_p  u_{\mu,\alp} \le \partial_t{u_{\mu,\alp}}$
if and only if
\[
    - (\mu \alp\ga)^{p-1} (p-1)\alp\ga  e^{-(p-1)\alp(|t|-\ga x)}
    \le - \mu \alp e^{-\alp(|t|-\ga x)}
\]
which is equivalent to
\begin{equation} \label{eq-mu-cond}
   \mu^{p-2}  \alp^{p-1} \ga^p (p-1) \ge e^{(p-2)\alp(|t|-\ga x)}.
\end{equation}
As before, we consider $p>2$ and $1<p<2$ separately, starting
with the former case, as it turns out to be simpler.

\medskip
\emph{Case} $1a$. $p>2$.
In this case we let $\alp=1$ and thus $u_{\mu,1}$ is
\p-superparabolic in $\Theta$ if
\[
   \mu  \ge \frac{e^{1+\ga}}{(\ga^p (p-1))^{1/(p-2)}} =:\mu_0,
\]
since $\Theta \subset B(\xi_0,1)$.
It follows that for $j\ge\mu_0$, $w_j=u_{j,1}$ is a suitable barrier family
and thus $\xi_0$ is regular.

\medskip
\emph{Case} $1b$. $1<p<2$.
In this case we let
$\al(\mu) := (\mu^{(2-p)}/\ga^p (p-1))^{1/(p-1)}$. 
Then \eqref{eq-mu-cond} is satisfied with $\al:=\al(\mu)$. Moreover, indexing with $j$ we have that
$w_j:=u_{j,\alp(j)}$ is \p-superparabolic in $\Theta$.
It also has the necessary limits at the boundary.
Finally, for sufficiently large $j$, we have
$\alp(j) \ge 1$  and hence for $(x,t) \in \overline{\Theta}$, 
\begin{align*}
     w_j(x,t)& = j(1-e^{-\al(j) (|t|-\ga x)}) 
        \ge j(1-e^{-(|t|-\ga x)}), 
\end{align*} 
showing that $w_j$ is a suitable barrier family,
and thus $\xi_0$ is regular.

\medskip
\emph{Case} 2. \emph{Downwards cone, i.e.\ $t_C<0$, see Figure\/~\ref{Fig:2}.}
(By downwards we do not mean straight downwards, i.e.\ $x_C$ is not
necessarily $0$.)

In this $2$-dimensional situation the cone splits $\Theta$ into two parts,
one to the left and one to the right of the cone, or more formally
into 
\begin{align*}
     \Theta_1 &=\{(x,t) \in \Theta:  x < y \text{ whenever }
              (y,t) \in C\}, \\
     \Theta_2 &=\{(x,t) \in \Theta :  x > y \text{ whenever }
              (y,t) \in C\}.
\end{align*}
It is possible that one of these is empty, but not both.

\begin{figure}[t]
\psfrag{cone}{$C$}
\psfrag{omegaone}{$\Theta_1$}
\psfrag{omegatwo}{$\Theta_2$}
\psfrag{vertex}{$\xi_0$}
\begin{center}
\includegraphics[width=.4\textwidth]{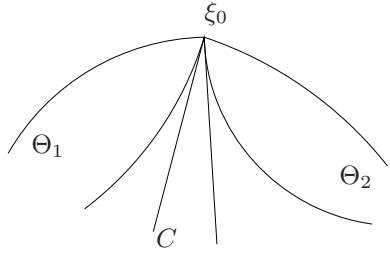}
\end{center}
\caption{The exterior cone condition with a downwards cone.}\label{Fig:2}
\end{figure}

It is easy to see that $\Theta_1$, if nonempty,
satisfies the exterior cone condition
with a horizontal cone around $\xi_0$,
and hence $\xi_0$ is regular with respect to $\Theta_1$ by Case~1,
and similarly with respect to $\Theta_2$.
It thus follows from Lemma~\ref{lem-component} that
$\xi_0$ is regular with respect to $\Theta$.
(If $\Theta_1 =\emptyset$ or $\Theta_2=\emptyset$ this follows directly.)
\end{proof}

We can considerably generalise the previous result.

\begin{theo} \label{thm-1+1-gen}
Let $C =\{(x,t): |x| \le -\theta t \}\subset\R^{1+1}$
for some $\theta>0$,
and assume that $\xi_0=(0,0)\in\partial\Theta$ 
and that there is a closed nonsingleton 
connected set $A \subset C \setm \Theta$ containing $\xi_0$.
Then $\xi_0$ is regular.
\end{theo}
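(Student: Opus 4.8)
The plan is to deduce this from the horizontal exterior cone case of Proposition~\ref{prop-cone-1+1}, using the connected set $A$ to split $\Theta$ near $\xi_0$ into two pieces, each of which misses an exterior horizontal cone. First I would reduce to a convenient local picture. By Proposition~\ref{prop-local} it suffices to prove regularity with respect to $\Theta\cap B$ for a small ball $B=B(\xi_0,\rho)$, and by Proposition~\ref{prop-restrict} together with Theorem~\ref{thm-Omminus} (or Lemma~\ref{lm-Omnibus} if $\Thetam=\emptyset$ or $\xi_0\notin\bdy\Thetam$, in which case we are already done) I may pass to the past part and assume $\Theta\subset\{(x,t):t<0\}$. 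Since $A$ is closed, connected and not a single point, for $\rho$ small a boundary-bumping argument produces a subcontinuum $A_\rho\subset A\cap\overline B$ that contains $\xi_0$ and meets $\bdy B$; it still satisfies $A_\rho\subset C\setm\Theta$, so $A_\rho$ is a continuum joining $\xi_0$ to $\bdy B$ inside the downward cone $C$ and avoiding $\Theta$.

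Next I would introduce two thin, opposite horizontal cones with vertex $\xi_0$,
\[
 C_R=\{(x,t):|t|<\ga x\}, \qquad C_L=\{(x,t):|t|<-\ga x\},
\]
choosing $0<\ga\le 1/\theta$. This choice forces $C_R$ and $C_L$ to lie outside $C$, since any point of $C_R$ has $x>|t|/\ga\ge\theta|t|$. The crucial claim is then a separation statement: no connected subset of $\Theta\cap B$ can meet both $C_R$ and $C_L$. Granting this, I split $\Theta\cap B$ into the union $\Theta_1$ of those components disjoint from $C_R$ and the union $\Theta_2$ of those components that meet $C_R$; by the claim every component of $\Theta_2$ is then disjoint from $C_L$. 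Thus $\Theta_1$ and $\Theta_2$ are disjoint open sets with $\Theta\cap B=\Theta_1\cup\Theta_2$, and $C_R$ is an exterior cone for $\Theta_1$ while $C_L$ is an exterior cone for $\Theta_2$.

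To finish, I apply Proposition~\ref{prop-cone-1+1} (Case~1, the horizontal cone) to each nonempty $\Theta_i$ having $\xi_0$ on its boundary, obtaining that $\xi_0$ is regular with respect to each, and then combine them via Lemma~\ref{lem-component}; Proposition~\ref{prop-local} then returns regularity with respect to $\Theta$. If $\xi_0\notin\bdy\Theta_i$ for some $i$, that piece does not approach $\xi_0$ and may be discarded after shrinking $B$.

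The hard part will be the separation claim, which is exactly where the connectedness of $A$ and its confinement to the downward cone $C$ are used. A path in $\Theta\cap B$ from $C_R$ to $C_L$ would have to pass from the right side to the left side while staying in $\{t<0\}$ and avoiding $A_\rho$; since $A_\rho$ is a continuum running from $\xi_0$ down to $\bdy B$ through the middle cone $C$, such a path is topologically forced to cross $A_\rho$, a contradiction. Making this rigorous is a plane-separation argument of Jordan-curve type---for instance, adjoining the vertical segment $\{(0,t):0\le t\le\rho\}$ to $A_\rho$ yields a crosscut of $B$ separating the right cone from the left cone---and this topological step is the main obstacle; everything else is bookkeeping with the already-established cone proposition and the combination and localization lemmas.
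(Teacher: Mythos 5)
Your overall skeleton matches the paper's: localise via Proposition~\ref{prop-local}, discard the future via Theorem~\ref{thm-Omminus} (or Lemma~\ref{lm-Omnibus}), split the localised set into two disjoint open pieces each of which misses one horizontal cone, and finish with Case~1 of Proposition~\ref{prop-cone-1+1} together with Lemma~\ref{lem-component}. Where you diverge is in how the split is produced. The paper does it with the order structure of the real line and no plane topology at all: after arranging $\Theta \subset B(\xi_0,-t_A)\cap\{(x,t):t<0\}$ for some $(x_A,t_A)\in A\setm\{\xi_0\}$, it sets $\Theta_1=\{(x,t)\in\Theta : x<y \text{ whenever } (y,t)\in A\}$ and $\Theta_2=\Theta\setm\Theta_1$; connectedness of $A$ enters only through the fact that the time-projection of $A$ contains $[t_A,0]$, so every time level of $\Theta$ carries points of $A$, whence $\Theta_1$ misses the right horizontal cone and $\Theta_2$ the left one. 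Your component-based splitting is a reasonable alternative (and has the incidental merit that the two pieces are automatically open, being unions of components), but it stands or falls with your separation claim.

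That separation claim is where the genuine gap lies: you flag it yourself as ``the main obstacle'', it is the crux of the whole argument, and the proof you hint at does not work as stated. The set $A$ is only assumed closed and connected; it need not be path connected, so no arc from $\xi_0$ to $\bdy B$ can in general be extracted from $A_\rho$ (a topologist's sine curve lying on its side inside $C$, attached to $\xi_0$ by a vertical segment, satisfies every hypothesis on $A$). Consequently $A_\rho\cup\{(0,t):0\le t\le\rho\}$ is in general \emph{not} a crosscut, and the Jordan curve theorem says nothing about it; separation of a disk by an arbitrary continuum joining two boundary points is a statement of plane-continuum theory (Janiszewski-type theorems), not a corollary of the Jordan curve theorem, and it moreover requires an argument identifying on which side of the continuum each cone lies. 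The claim is true, and an elementary proof runs in the opposite direction: if some component of $\Theta\cap B$ met both $C_R$ and $C_L$, it would contain a polygonal arc $\sigma$ from $C_R$ to $C_L$ lying in $B\cap\{t<0\}$ and avoiding $A_\rho$; closing $\sigma$ up by arcs running inside $C_R$ and $C_L$, outside $B$, and over the top through $\{t\ge 0\}$ --- all disjoint from $A_\rho\subset C\cap\overline{B}$ --- yields a genuine Jordan curve $J$ disjoint from $A_\rho$, and a crossing-parity count shows that $\xi_0$ lies inside $J$ while the point of $A_\rho\cap\bdy B$ produced by boundary bumping lies outside, contradicting the connectedness of $A_\rho$. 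Until an argument of this kind (or an appeal to continuum-separation theorems) is supplied, your proposal is incomplete at exactly the step that the paper's one-dimensional order trick renders unnecessary.
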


\begin{proof}
Let $(x_A,t_A) \in A \setm \{\xi_0\}$ so that $t_A<0$.
Since regularity is a local property, we may assume
that $\Theta \subset B(\xi_0,-t_A)$.
By Theorem~\ref{thm-Omminus}, we may assume that
$\Theta \subset \{(x,t) : t<0\}$.
We next split $\Theta$ into two parts,
\begin{align*}
     \Theta_1 &=\{(x,t) \in \Theta :  x < y
               \text{ whenever }      (y,t) \in A\}, \\
     \Theta_2 &=\Theta \setm \Theta_1.
\end{align*}
It is possible that one of these is empty, but not both.
We can now conclude the proof exactly as in the proof of Case~2 
of Proposition~\ref{prop-cone-1+1}.
\end{proof}

\subsection{Generalised horizontal cone condition in 
\texorpdfstring{$\R^{n+1}$}{}}
\label{sect-gen-cone}

In this section we will show that if the lower
half of a horizontal cone is in the complement
then the point is regular.

\begin{prop} \label{prop-horiz}
\textup{(Horizontal cone condition)}
Let  
$\Theta\subset\Rno$ be an open set and $\xi_0=(0,0)\in\partial\Theta$.
If there exist $\theta,r >0$ and a unit vector $v \in \R^n$ such
that the cone
\begin{equation}   \label{eq-def-cone-by-v}
    C:= \{(x,t): |(x,t)| < \theta x \cdot v, \ t \le 0  
\text{ and } |x|<r\}
    \subset \Rno \setm \Theta,
\end{equation}
then $\xi_0$ is regular with respect to $\Theta$. 
\end{prop}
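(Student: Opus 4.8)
The plan is to follow the pattern of Proposition~\ref{prop-cone-1+1} and Theorem~\ref{thm-ell-reg-iff-par-reg}: reduce to a normalised, localised situation and then exhibit a barrier family, so that Theorem~\ref{thm:barrier-char} applies. First I would normalise $v=e_1$, so that $x\cdot v=x_1$ and the excluded cone becomes $C=\{(x,t):|(x,t)|<\theta x_1,\ t\le0,\ |x|<r\}$. Since regularity is local (Proposition~\ref{prop-local}) and insensitive to the future (Theorem~\ref{thm-Omminus} together with Lemma~\ref{lm-Omnibus}), I may replace $\Theta$ by $\Theta\cap B(\xi_0,\de)\cap\{t<0\}$ for a small $\de\in(0,r)$; on this set the constraints $t\le0$ and $|x|<r$ are automatic, so $\Theta\subset\{|(x,t)|\ge\theta x_1\}$.

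In the one-dimensional horizontal cone of Proposition~\ref{prop-cone-1+1}, Case~1, the surface $\{|t|=\ga x\}$ becomes, for $t<0$, the \emph{line} $t=-\ga x$, so a linear exponent works in the exponential barrier $\mu(1-e^{-\al(|t|-\ga x)})$. In $\Rno$ the surface $\{|(x,t)|=\theta x_1\}$ is genuinely curved and no affine function cuts it out, so I would instead lift the \emph{elliptic} cone barrier. The spatial cone $K=\{|x|<\theta x_1\}$ provides an exterior cone for $G=B(\xi_0,\de)\setminus\overline K$, so $0$ is regular for $\Delta_p$ on $G$ by the classical exterior cone criterion. Taking $u_j$ as in Theorem~\ref{thm-ell-reg-iff-par-reg} (that is, $\Delta_p u_j=-j^{p-1}$ with $u_j-j|x|\in W^{1,p}_0(G)$, hence $u_j\ge j|x|\ge0$, $u_j$ \p-superharmonic, and $u_j\to0$ at $0$ by elliptic regularity), I set $w_j(x,t)=u_j(x)-j^{p-1}t$. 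Then $\Delta_p w_j=\Delta_p u_j=-j^{p-1}=\partial_t w_j$, so $w_j$ is \p-parabolic; moreover $w_j=u_j(x)+j^{p-1}|t|>0$ on $\Theta$ wherever $x\in G$ (using $t<0$), $w_j\to0$ at $\xi_0$ (condition~\ref{cond-third}), and $w_j\ge\min\{j|x|,\,j^{p-1}|t|\}$ grows on the far boundary (condition~\ref{cond-second-weak}). To cover the remaining part of $\Theta$, namely the points with $x\in K$, I would extend $w_j$ by the pasting lemma (Lemma~\ref{lem-pasting}) with a large \p-superparabolic function, exploiting that such intrusion points satisfy $|t|\ge(\theta^2x_1^2-|x|^2)^{1/2}>0$; one then concludes by Theorem~\ref{thm:barrier-char} and Proposition~\ref{prop-local}. (One may alternatively package this as a single exponential barrier $\mu_j(1-e^{-\al_j\phi})$ with $\phi$ comparable to the distance to $C$ and spatially \p-superharmonic, reducing superparabolicity to $(\mu_j\al_j)^{p-2}e^{-(p-2)\al_j\phi}[(p-1)\al_j|\grad\phi|^p-\Delta_p\phi]\ge|\partial_t\phi|$, arranged by taking $\mu_j,\al_j$ large with the usual split into $p>2$ and $1<p<2$.)

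The main obstacle is precisely the vertex $\xi_0$ and the mismatch between the \emph{isotropic} geometry of the cone and the \emph{parabolic} scaling $x\sim\lm$, $t\sim\lm^p$ of the equation. Because the space-time cone tapers in time, $\Theta$ intrudes into the spatial cone $K$ along points with $|t|\ge(\theta^2x_1^2-|x|^2)^{1/2}$ arbitrarily close to $\xi_0$, where $u_j(x)$ is undefined and must be compensated by the time term; matching the two pieces in a lower semicontinuous way \emph{up to} the vertex is the delicate step. The same difficulty explains why the naive distance-type exponent $\phi=|(x,t)|-\theta x_1$ fails: its \p-Laplacian is homogeneous of degree $-1$, and a direct computation shows the angular factor is \emph{positive} on part of $\bdy C$, so $\Delta_p\phi\sim C/|(x,t)|\to+\infty$ there and the superparabolicity inequality collapses near the vertex. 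Producing a barrier that is simultaneously positive on the localised $\Theta$, vanishing at $\xi_0$, growing on the far boundary, and keeping $\Delta_p$ under control in both the near-axis (parabolic) and off-axis (isotropic) regimes is the crux; this is where the restriction to the \emph{lower} half of the cone ($t\le0$), which makes the time term $-j^{p-1}t$ positive, and the case analysis $p\gtrless2$ are used.
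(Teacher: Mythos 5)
Your localisation and the idea of lifting an elliptic barrier $u_j(x)-j^{p-1}t$ are sound as far as they go, but the step you defer to the pasting lemma is not a technicality: it fails, and in the form you propose it cannot be repaired. For $x$ inside the spatial cone $K=\{|x|<\theta x_1\}$, membership of $(x,t)$ in $\Rno\setm C$ only requires $t^2\ge\theta^2x_1^2-|x|^2$, and such points accumulate at the vertex: for every small $s>0$ the point $(sv,-s\sqrt{\theta^2-1})$ lies on $\bdy C$ (recall $\theta>1$ is forced if $C\ne\emptyset$), hence may belong to $\Theta$. So both the intrusion region $\Theta_{\mathrm{in}}=\{(x,t)\in\Theta:x\in\overline{K}\}$, where $w_j$ is undefined, and the interface $\Theta\cap\{|x|=\theta x_1\}$, across which you want to paste, reach all the way down to $\xi_0$. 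Along that interface $u_j$ has boundary values $j|x|$, so $w_j\approx j|x|+j^{p-1}|t|\to0$ as the interface approaches $\xi_0$. Lemma~\ref{lem-pasting} requires the pasted function to be lower semicontinuous, which forces your extension to lie below $\liminf w_j$ at every point of $\Theta_{\mathrm{in}}$ adjacent to the interface; hence the extension must itself tend to $0$ at $\xi_0$. A ``large'' \p-superparabolic function does not do this, and a positive \p-superparabolic function on a neighbourhood of $\Theta_{\mathrm{in}}$ in $\Theta$ that vanishes at $\xi_0$ and stays large away from $\xi_0$ is precisely the barrier you are trying to build -- the argument is circular. Note also that the escape used in Case~2 of Proposition~\ref{prop-cone-1+1}, splitting $\Theta$ into disjoint pieces and invoking Lemma~\ref{lem-component}, is unavailable here: for $n\ge2$ the complement of $C$ is connected.

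The missing idea, which is how the paper actually proceeds (Proposition~\ref{prop-sect-gen-cone}), is to eliminate the intrusion region altogether by \emph{shearing} the spatial cone in time rather than keeping it fixed, and to compensate for the shear by a drift term in the elliptic equation. One takes a slightly thinner closed spatial cone $E=\{x:|x|\le\theta'x\cdot v\}$, which is \p-thick at $0$, sets $\Om=B'(0,r_0)\setm E$, and solves $\Delta_p u_j=\theta\min\{\grad u_j\cdot\zeta,0\}-j$ in $\Om$ with $u_j-j|x|\in\Wp_0(\Om)$ (Mal\'y--Ziemer gives existence, $u_j\ge j|x|$, and $u_j(x)\to0$ as $x\to0$ by the elliptic Wiener criterion). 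The barrier is then $v_j(x,t)=u_j(x-\eta(-t)\zeta)-jt$ with a linear shear $\eta(t)=\ga t$, $\zeta=v$, defined on $\Theta'=\{(x,t): -r_0/\theta<t<0,\ x-\eta(-t)\zeta\in\Om\}$. Choosing $\ga$ and $\theta'$ so that the sheared cylinder $E'$ of \eqref{def-E-prime} sits inside $C$, the domain $\Theta'$ contains all of the localised $\Thetam$ near $\xi_0$: there is no intrusion region and no pasting. Superparabolicity is exactly where the drift is used: $\partial_t v_j=\eta'(-t)\grad u_j\cdot\zeta-j$, while $\Delta_p v_j=\theta\min\{\grad u_j\cdot\zeta,0\}-j\le\eta'(-t)\grad u_j\cdot\zeta-j$ because $0\le\eta'\le\theta$. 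Your equation $\Delta_p u_j=-j^{p-1}$ contains no such term and therefore cannot absorb any time dependence of the spatial argument; that, and not the $p\gtrless2$ bookkeeping, is the crux that the geometry of $C$ forces.
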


Relying on potential theoretic tools we can deduce a fairly
more general result, see Proposition~\ref{prop-sect-gen-cone} below.
Proposition~\ref{prop-horiz} then follows directly as a special case,
by taking $\eta(t)=\ga t$ and
$E=\{x\in\R^n: |x|<\theta' x\cdot v\}$ 
for some sufficiently small $\ga,\theta'>0$,
and upon observing that the corresponding set $E'$ defined by \eqref{def-E-prime}
below is then contained
in the cone $C$ in~\eqref{eq-def-cone-by-v}.

Proposition~\ref{prop-horiz} holds for any $p>1$, and in particular for $p=2$. 
It is immediate to see that a horizontal cone $C$ such as defined by \eqref{eq-def-cone-by-v},
always contains a so-called \emph{tusk},
that is a set in $\Rno$ of the form
\[
V:= \{(x,t): -T<t<0 \text{ and } |x-(-t)^{1/2}x_0|^2 < R^2(-t)\},
\]
for some $x_0\in\Rn \setm \{0\}$, and positive constants $R$ and $T$, 
provided $T$ is small enough.
Consider $\Theta\in\Rno$ and $\xi_0=(0,0)\in\partial\Theta$: it is well known that if there is a tusk $V$
with $\overline{V} \cap \overline{\Theta}=\{\xi_0\}$,
then $\xi_0$ is regular for the heat equation 
(see Effros--Kazdan~\cite{EfKaz}, which refers to $\xi_0$ as being 
\emph{parabolically touchable}, and 
Lieberman~\cite{lieberman}). Therefore, under this point of view, Proposition~\ref{prop-horiz} gives a weaker condition. We do not know, whether a proper tusk condition holds for the parabolic \p-Laplacian, when $p\not=2$.

Recall that for $1<p\le n$
a set $A\subset\Rn$ is \emph{\p-thick} at $0\in\Rn$ if
\[
\int_0^1\biggl(\frac{\operatorname{cap}_p(A\cap B'(0,r),B'(0,2r))}{r^{n-p}}\biggr)^{1/(p-1)}
    \frac{dr}r<\infty,
\]
and \emph{\p-thin} otherwise,
where for a given set $E\subseteq B'(0,2r)$, $\operatorname{cap}_p(E,B'(0,2r))$ is the variational \p-capacity of $E$, and
$B'(0,r)$ is the ball of centre $0$ and radius $r$ in $\Rn$.

The \emph{Wiener criterion} says that $0$ is regular
for \p-harmonic functions with respect to an open set $V$ if and only
if $\R^n \setm V$ is \p-thick at $0$.
It  was obtained by Wiener~\cite{wiener} for $p=2$ in $\R^3$.
In the nonlinear case, the sufficiency was obtained
by Maz{\cprime}ya~\cite{mazya70} and Gariepy--Ziemer~\cite{GaZi}, 
and the necessity by Lindqvist and Martio~\cite{lindqvistm85} 
(for $p\ge n-1$) and by Kilpel\"ainen and Mal\'y~\cite{KilpMaly},
see also  Chapter~4 of Mal\'y--Ziemer~\cite{MaZi}.
These results have later been proved in more general settings in e.g.\
 \cite{mikkonen}, \cite{BMS}, \cite{JB-Matsue} and \cite{JB-pfine}.

Note that for $p>n$, by the same criterion, a singleton is always
regular, which is useful and reflected in Example~\ref{ex-regular-singleton} below.

\begin{prop}\label{prop-sect-gen-cone}
Let $\Theta\subset\Rno$ be an open set and $\xi_0=(0,0)\in\partial\Theta$.
Also let  $E\subset\R^n$ be a closed set which is 
\p-thick at $0$. 
Assume that for some $\theta>0$,
some $\zeta\in\R^n$ with
$|\zeta|=1$, and a nondecreasing differentiable
function $\eta:[0,r_0/\theta]\to[0,\infty)$,
such that $\eta(0)=0$ and $\eta'\le\theta$ in $[0,r_0/\theta]$, we have
\begin{equation}\label{def-E-prime}
E':= \biggl\{(x,t)\in B(\xi_0,r_0): -\frac{r_0}\theta<t<0 
  \text{ and } x -\eta(-t)\zeta \in E\biggr\} 
  \subset  B(\xi_0,r_0) \setm\Theta.
\end{equation}
Then $\xi_0$ is regular with respect to $\Theta$.
\end{prop}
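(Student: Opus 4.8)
The plan is to reduce the problem to the elliptic Wiener criterion and then transplant an elliptic barrier for $E$ into space-time along the moving profile $t\mapsto\eta(-t)\zeta$. First I would make two reductions. Since regularity is a local property, Proposition~\ref{prop-local} lets me assume $\Theta\subset B(\xi_0,r_0)$, and since the future does not affect regularity, Theorem~\ref{thm-Omminus} lets me further assume $\Theta\subset\{(x,t):t<0\}$. With these reductions the inclusion \eqref{def-E-prime} says exactly that $E'\cap\Theta=\emptyset$; equivalently, for every $(x,t)\in\Theta$ the shifted point $y:=x-\eta(-t)\zeta$ lies in $\Rn\setm E$. As $\eta(0)=0$ and $\eta'\le\theta$ give $\eta(-t)\le\theta|t|$, the point $y$ stays in a fixed ball, so $y\in V:=B'(0,\rho)\setm E$ for a suitable $\rho>0$. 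The goal is then to build a barrier family at $\xi_0$ and invoke Theorem~\ref{thm:barrier-char}.

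The potential-theoretic input enters next. Since $E$ is closed and \p-thick at $0$, and \p-thickness only increases when the complementary set is enlarged, the set $\Rn\setm V\supset E$ is \p-thick at $0$, so by the Wiener criterion $0$ is regular for \p-harmonic functions with respect to $V$. Then, exactly as in the proof of Theorem~\ref{thm-ell-reg-iff-par-reg}, I would take $\varphi(x)=|x|$ and solve
\[
\begin{cases}
\Delta_p u_j=-j^{p-1} & \text{in }V,\\
u_j-j\varphi\in W^{1,p}_0(V),
\end{cases}
\]
obtaining \p-superharmonic functions $u_j\ge0$ with $u_j(x)\ge j|x|$ (because $|x|$ is \p-subharmonic) and $\lim_{V\ni x\to0}u_j(x)=0$ (because $0$ is regular for $V$). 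Note that by scaling $u_j=jU$, where $U$ solves $\Delta_p U=-1$ in $V$ with $U=|x|$ on $\bdy V$, so $U$ is independent of $j$.

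I would then set
\[
w_j(x,t)=u_j\bigl(x-\eta(-t)\zeta\bigr)-b_j t,\qquad (x,t)\in\Theta,
\]
with $0<b_j\le j^{p-1}$ to be chosen. The linear term is included so that $w_j$ does not collapse to zero along the whole profile $t\mapsto(\eta(-t)\zeta,t)$ but only at $\xi_0$: indeed $w_j$ is positive and continuous, and $w_j(x,t)\to0$ as $(x,t)\to\xi_0$, since $\eta(0)=0$ forces $y\to0$ and hence $u_j(y)\to0$. For the blow-up condition~\ref{cond-second-weak} I would use $w_j\ge j\,|x-\eta(-t)\zeta|+b_j|t|$ and split a boundary point $(x,t)$ with $|(x,t)|\ge1/k$ into the case $|t|\ge\eps_k$, where the time term is at least $b_j\eps_k$, and the case $|t|<\eps_k$, where $|x-\eta(-t)\zeta|\ge|x|-\theta|t|$ is bounded below, with $\eps_k\sim 1/(k(1+\theta))$; in either case $w_j\ge k$ once $j$ and $b_j$ are large enough.

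The crux, and the step I expect to be the main obstacle, is the \p-superparabolicity of $w_j$ in $\Theta$. A direct computation gives $\Delta_p w_j=-j^{p-1}$ and $\partial_t w_j=\eta'(-t)\,\nabla u_j(y)\cdot\zeta-b_j$, whence
\[
\partial_t w_j-\Delta_p w_j=(j^{p-1}-b_j)+\eta'(-t)\,\nabla u_j(y)\cdot\zeta .
\]
With the coefficient matched, $b_j=j^{p-1}$, this reduces to $\eta'(-t)\,\nabla u_j\cdot\zeta$, so superparabolicity is equivalent to the one-sided bound $\nabla u_j\cdot\zeta\ge0$; this is precisely where the hypothesis $0\le\eta'\le\theta$, bounding the speed of the moving profile, is used. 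Establishing this drift control is the delicate part. Away from the singular set $E$ one can instead keep slack, $b_j<j^{p-1}$, and dominate the drift through $(j^{p-1}-b_j)\ge\theta|\nabla u_j|$; since $|\nabla u_j|=j|\nabla U|$ is of order $j$, for $p>2$ the slack of order $j^{p-1}$ suffices for large $j$, while $b_j\to\infty$ preserves the blow-up. Near $E$, and throughout the singular range $1<p<2$ where $j^{p-1}$ grows more slowly than the order-$j$ drift, this crude bound is insufficient, and one must exploit the sign of $\nabla u_j\cdot\zeta$, i.e.\ a monotonicity of the elliptic barrier in the direction $\zeta$, or make a sharper choice of the source and boundary data, in the spirit of the singular cases of Propositions~\ref{prop-cone-1+1} and~\ref{prop:exterior-ball-condition}. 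This is the heart of the proof. Once $w_j$ is known to be \p-superparabolic, $\{w_j\}$ is a barrier family at $\xi_0$ for the localized domain, and Theorem~\ref{thm:barrier-char} together with Proposition~\ref{prop-local} yields the regularity of $\xi_0$ with respect to the original $\Theta$.
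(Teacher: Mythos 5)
Your overall strategy --- localise, discard the future, pass to the elliptic set $\Om=B'(0,r_0)\setm E$ via the Wiener criterion, and transplant an elliptic barrier along the moving profile $(x,t)\mapsto x-\eta(-t)\zeta$ with a linear-in-$t$ correction --- is exactly the strategy of the paper, and your reductions, boundary-limit argument and growth estimates are fine. But the step you yourself flag as ``the heart of the proof'' is a genuine gap, and your proposed substitutes do not close it. With the pure source equation $\Delta_p u_j=-j^{p-1}$ there is no control whatsoever on $\grad u_j\cdot\zeta$: the set $E$ is an arbitrary closed \p-thick set, so no monotonicity of $u_j$ in the direction $\zeta$ can be expected, and $|\grad u_j|$ is not bounded up to $\bdy\Om$ (in particular near $E$), which is precisely where the superparabolicity inequality must hold, since interior points of the space-time domain project to points $y=x-\eta(-t)\zeta$ arbitrarily close to $E$. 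The slack device $b_j<j^{p-1}$ therefore fails near $E$ for every $p$, and fails globally in the singular range $1<p<2$; as you note, that route cannot cover the stated range $1<p<\infty$.

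The paper's fix is exactly the ``sharper choice of the source'' you allude to, and it makes the drift problem disappear entirely: one builds the worst admissible drift into the elliptic equation itself. Instead of $\Delta_p u_j=-j^{p-1}$, solve
\[
\begin{cases}
\Delta_p u_j(x) = \theta \min\{\grad u_j(x)\cdot \zeta,\, 0\} - j
&  \text{in } \Om, \\
u_j-f_j \in \Wp_0(\Om),
\end{cases}
\]
with $f_j(x)=j|x|$; existence of a continuous solution for this gradient-dependent right-hand side is Theorem~6.21 of Mal\'y--Ziemer~\cite{MaZi}, and Corollary~6.22 there gives $\lim_{x\to0}u_j(x)=0$ at the regular point $0$. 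Since the right-hand side is $\le -j<0$, $u_j$ is a supersolution of $\Delta_p u=0$, so comparison with the subsolution $f_j$ (Lemma~3.18 in \cite{HeKiMa}) gives $u_j\ge j|x|$. Now for $v_j(x,t)=u_j(x-\eta(-t)\zeta)-jt$ one has, writing $y=x-\eta(-t)\zeta$,
\[
\Delta_p v_j = \theta\min\{\grad u_j(y)\cdot\zeta,0\}-j
\le \eta'(-t)\min\{\grad u_j(y)\cdot\zeta,0\}-j
\le \eta'(-t)\,\grad u_j(y)\cdot\zeta-j
= \partial_t v_j,
\]
where the first inequality uses $\eta'\le\theta$ together with $\min\{a,0\}\le0$, and the second uses $\eta'\ge0$ together with $\min\{a,0\}\le a$. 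Thus $v_j$ is \p-superparabolic for every $1<p<\infty$ with no sign information on $\grad u_j\cdot\zeta$ at all, and $\{v_j\}_{j=1}^\infty$ is a strong barrier family with $d(\xi)=|x-\eta(-t)\zeta|-t$; after this, your concluding appeal to Theorem~\ref{thm:barrier-char}, Propositions~\ref{prop-restrict} and~\ref{prop-local}, and Theorem~\ref{thm-Omminus} goes through verbatim.
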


Before the proof, let us point out some special cases.
Proposition~\ref{prop-sect-gen-cone} gives us a sufficient condition for the boundary regularity of
$\xi_0\in\bdry\Theta$, and covers a wide range of situations,
the most important one probably being the one already
described above in Proposition~\ref{prop-horiz}. 
Here are some other
interesting cases.

\begin{example}  \label{ex-regular-singleton}
In the simplest case $n=1$, every $x_0\in\R$ is regular,
so one can take $E=\{x_0\}$.
Regularity of $\xi_0$ is therefore guaranteed, whenever there is
a curve in $\R^{1+1}$ approaching $\xi_0$ from below, i.e.\
with subhorizontal derivative at $\xi_0$;
cf.\ Theorem~\ref{thm-1+1-gen}.
\end{example}

\begin{example}
If $n=2$, then a segment in $\R^2$ with $x_0$ as an endpoint  will do
as $E$.  Then $E'$ will be a vertical two-dimensional triangle
with vertex at $\xi_0$.
More generally, if $\ga$ is a continuous curve in $\R^2$,
ending at $x_0$, then $B'(x_0,r)\setm\ga$ is regular 
for \p-harmonic functions at $x_0$
and the corresponding set $E'$ is a two-dimensional triangle-shaped
``curtain'' following the curve $\ga$.
It is also possible to have disconnected $E$, i.e.\ $E$ consisting of
suitably chosen segments accumulating at $x_0$.
\end{example}

\begin{example}
In higher dimensions, the easiest generalisation is a set $E$ which satisfies
an interior cork-screw condition at $x_0$ (the cone considered above does).
More generally, it is enough if $B'(x_0,r)\setm E$ is porous at $x_0$.
We recall that a set $M\subset\Rn$ is \emph{porous} at a point 
$x\in\Rn$, if there exists a constant $c\in(0,1)$ such that, 
for each $\eps>0$, there exist $y\in\Rn$ and $r>c|x-y|$, with $|x-y|<\eps$ and 
$M\cap B'(y,r)=\emptyset$.
See Corollary~11.25 in Bj\"orn--Bj\"orn~\cite{BBbook} for some
more general sufficient porosity conditions.
A sharp condition for $E$ follows from the Wiener criterion.
\end{example} 

\begin{proof}[Proof of Proposition \ref{prop-sect-gen-cone}]
Let $E\subset\R^n$ be closed and
such that $0\in\bdry E$.
Assume that $E$ is \p-thick at $0$.
Let 
$\Om=B'(0,r_0)\setm E \subset \R^n$.
Then $0$ is regular for \p-harmonic functions with
respect to $\Om$,
by the Wiener criterion (see above). 

Let $u_j$
be a continuous solution to the Dirichlet problem
\[ 
\begin{cases}
\Delta_p u_j(x) = \theta \min\{\grad u_j(x)\cdot \zeta, 0\} -j
&  \text{in } \Om, \\
u_j-f_j \in \Wp_0(\Om),
\end{cases}
\] 
where $f_j(x)=j|x|$, $j=1,2,\ldots$.
Such a function $u_j$ exists by Theorem~6.21 in 
Mal\'y--Ziemer~\cite{MaZi},
and Corollary~6.22 in~\cite{MaZi} implies that
\begin{equation}
\lim_{x\to 0}u_j(x)= f_j(0) =0,
\label{eq-u-to-0}
\end{equation}
since $0$ is regular for \p-harmonic functions.
Moreover, $\Delta_p u_j<0$, i.e.\ 
$u_j$ is a supersolution to the \p-harmonic equation.
Since $f_j$ is a subsolution to the \p-harmonic equation
(by direct calculation),
Lemma~3.18 in 
Heinonen--Kilpel\"ainen--Martio~\cite{HeKiMa}
(and the continuity of $u_j$ and $f_j$) shows that
$u_j\ge f_j$ in $\Om$.
Let
\[
\Theta'= \{(x,t): - r_0/\theta< t< 0 \text{ and }
  x - \eta(-t)\zeta \in \Om \}
\]
 and for $\xi=(x,t)\in\Theta'$ define
\begin{equation}   \label{eq-def-vj}
v_j(\xi) = u_j(x-\eta(-t)\zeta) -jt.
\end{equation}
Then
\[
\bdry_t v_j (\xi) = \eta'(-t)
            \grad u_j(x-\eta(-t)\zeta) \cdot\zeta - j
\]
and
\[
\Delta_p v_j(\xi) = \Delta_p u_j(x-\eta(-t)\zeta)
      = \theta \min\{\grad u_j(x-\eta(-t)\zeta)\cdot \zeta, 0\} - j < 0.
\]
Since $0 \le\eta'\le\theta$, we thus have
\begin{align*}
\Delta_p v_j(\xi)
     &\le \eta'(-t)
          \min\{\grad u_j(x-\eta(-t)\zeta)\cdot \zeta, 0\} -j\\
&\le \eta'(-t) \grad u_j(x-\eta(-t)\zeta)\cdot \zeta -j
= \bdry_t v_j (\xi),
\end{align*}
i.e.\ $v_j$ is \p-superparabolic in $\Theta'$.
As $u_j\ge f_j$ in $\Om$, we have
\[
v_j(\xi) \ge j ( |x -\eta(-t)\zeta| -t ) =: j {d}(\xi).
\]
Moreover, \eqref{eq-u-to-0} and~\eqref{eq-def-vj} yield
\[
\lim_{\xi\to \xi_0} v_j(\xi) = 0.
\]
Hence $v_j$ forms  a strong family of barriers at $\xi_0=(0,0)$
with respect to $\Theta'$, and Theorem~\ref{thm:barrier-char} shows that
$\xi_0$ is a regular boundary point with respect to $\Theta'$.

By our assumptions, for some
$0<r<\tfrac12\min\{r_0,r_0/\theta\}$ we have that
$\Theta_\limminus\cap B(\xi_0,r) \subset \Theta'\cap B(\xi_0,r)$.
Then $\xi_0$ is a regular boundary point with respect to $\Theta'\cap B(\xi_0,r)$,
and hence also with respect to $\Theta_\limminus$ and consequently with respect to $\Theta$,
by Propositions~\ref{prop-restrict},~\ref{prop-local}
and Theorem~\ref{thm-Omminus}.
\end{proof}

%%%%%%%%%%%%%%%%%%%%%%%%%%%%%%%%%%%%%%%%%%%%%%%
\section{A \texorpdfstring{Petrovski\u\i}{Petrovskii} condition for 
\texorpdfstring{$p>2$}{}}\label{S:petr}

In this section we show that there is an increasing barrier family in the domain \eqref{Eq:petr:1} at $(0,0)$, and thus the origin as final point is regular. 
We follow the same approach as in 
Lindqvist~\cite{lindqvist95}: the only difference is in the choice of the function $f$ in \eqref{eq:petrowsky-barriers}. 

\begin{prop}\label{Prop:Petr-deg}
Let $p>2$. The origin $(0,0)$ is regular with respect to the domain
\begin{equation}\label{Eq:petr:1}
\begin{aligned}
\Theta=\biggl\{(x,t):&
  -\frac{1}{2e}<t<0 \text{ and }\\
   &\biggl(\frac{|x|}{(-t)^{1/\lambda}}\biggr)^{p/(p-1)}
   <K(-t)^{n(p-2)/\lambda}h(t)^{\al(p-2)}\biggr\},
\end{aligned}
\end{equation}
in $\Rno$, 
where $K$ and $\al$ denote arbitrary positive constants, $\lm=n(p-2)+p$
and 
\[
    h(t)=\frac{|{\log(-t)}|^{p-2}-1}{p-2}.
\]
\end{prop}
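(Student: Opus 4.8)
The plan is to invoke the barrier characterisation of Theorem~\ref{thm:barrier-char} and exhibit an increasing barrier family at $\xi_0=(0,0)$. Since $\Theta\subset\{t<0\}$ has $(0,0)$ as its unique latest point and $\Thetam=\Theta$, Propositions~\ref{prop-restrict} and~\ref{prop-local} together with Theorem~\ref{thm-Omminus} allow me to work in an arbitrarily small neighbourhood of the vertex and to ignore what happens for $t\ge0$. The domain \eqref{Eq:petr:1} is built precisely on the self-similar geometry of the Barenblatt solution $\mathcal B_p$: the similarity variable $|x|/(-t)^{1/\lambda}$, the exponent $p/(p-1)$ and the number $\lambda=n(p-2)+p$ all occur in it, and $\partial\Theta$ is nothing but a Barenblatt paraboloid carrying the logarithmic correction $h(t)=\efoft$. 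Following Lindqvist~\cite{lindqvist95}, I would therefore take barriers of Barenblatt type in which the free Barenblatt constant is promoted to a time profile $f=f_j$,
\begin{equation}\label{eq:petrowsky-barriers}
  w_j(x,t)=(-t)^{-n/\lambda}\Bigl(f(t)-\tfrac{p-2}{p}\lambda^{1/(1-p)}
     \bigl((-t)^{-1/\lambda}|x|\bigr)^{p/(p-1)}\Bigr)_{\limplus}^{(p-1)/(p-2)},
\end{equation}
the choice of $f$ being the only point where the argument departs from \cite{lindqvist95}; here $f_j$ will be a $j$-dependent constant times a power of $h$, calibrated so that the edge of the support of $w_j$ tracks $\partial\Theta$.

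The heart of the matter is to choose $f$ so that $w_j$ is a positive \p-superparabolic function, i.e.\ a continuous weak supersolution (which then is \p-superparabolic by Lemma~\ref{lem-comp}). I would compute the radial \p-Laplacian and the time derivative of \eqref{eq:petrowsky-barriers} directly; writing $s$ for the normalised similarity variable, the requirement $\partial_t w_j-\Delta_p w_j\ge0$ reduces to an expression that is affine in $s$ on the support $\{0\le s<1\}$, so that it suffices to control its two endpoint values at $s=0$ and as $s\to1$. For constant $f$ both endpoints vanish, this being exactly the Barenblatt identity, so everything is governed by $f'(t)$ and by the rate at which the support is made to follow the narrowing cusp. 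Matching the support to the logarithmically corrected boundary forces $f$ to behave like $(-t)^{n(p-2)/\lambda}h(t)^{\alpha(p-2)}$, and the restriction $t>-\tfrac1{2e}$, which guarantees $|\log(-t)|>1$ and hence $h>0$, is what makes the resulting differential inequality consistent; this is also where $p>2$ enters, through the positive exponent $(p-1)/(p-2)$ of the compactly supported profile. I expect this to be the main obstacle: keeping the two endpoint inequalities of the correct sign simultaneously with the vanishing of $f$ at the vertex, right up to the borderline exponent, is precisely the balance that separates the regular case from the irregular one in Petrovski\u\i's criterion.

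Once superparabolicity and positivity are in hand, the remaining conditions of Definition~\ref{def-barrier} are comparatively soft. For \ref{cond-third} I would evaluate \eqref{eq:petrowsky-barriers} along $x=0$, where $w_j(0,t)=(-t)^{-n/\lambda}f(t)^{(p-1)/(p-2)}$; the calibration of $f$ makes this a positive power of $-t$ times a power of $h$, which tends to $0$ as $t\to0^-$ because a power of $-t$ beats the logarithm, and a short estimate upgrades this to $\lim_{\zeta\to\xi_0}w_j(\zeta)=0$ throughout the cusp. For \ref{cond-second-weak} I would use that $\{w_j\}$ is increasing in $j$: on $\partial\Theta\setm\{\xi_0\}$ away from the vertex the values of $w_j$ grow without bound in $j$, so for each $k$ a sufficiently large $j$ yields $\liminf_{\Theta\ni\zeta\to\xi}w_j(\zeta)\ge k$ whenever $|\xi-\xi_0|\ge1/k$. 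Together with \ref{cond-first} and \ref{cond-cont} this makes $\{w_j\}$ a barrier family at $\xi_0$, and Theorem~\ref{thm:barrier-char} then gives that the origin is regular with respect to $\Theta$.
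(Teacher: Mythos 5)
Your framework (construct a barrier family, invoke Theorem~\ref{thm:barrier-char}, localise via Propositions~\ref{prop-restrict}, \ref{prop-local} and Theorem~\ref{thm-Omminus}) is the paper's, but the barrier you propose does not have the supersolution property, and the failure sits exactly at the step you call the heart of the matter. Your profile is the \emph{time-reversed} Barenblatt function: it contains $-t$ where $\mathcal B_p$ contains $t$, and time reversal flips the sign of $\partial_t w$ while leaving $\Delta_p w$ unchanged, so the ``Barenblatt identity'' is not available. Concretely, writing $c_0=\frac{p-2}{p}\lambda^{1/(1-p)}$, $s=|x|(-t)^{-1/\lambda}$ and $G=f(t)-c_0s^{p/(p-1)}$, a direct computation on $\{G>0\}$ gives
\begin{equation*}
\partial_t w_j-\Delta_p w_j
=\frac{(-t)^{-n/\lambda}}{\lambda(p-2)(-t)}\,G^{1/(p-2)}
\bigl[2\lambda G+\lambda(p-1)(-t)f'(t)-2pf(t)\bigr],
\end{equation*}
so the expression is indeed affine in $G$, but for \emph{constant} $f>0$ the bracket equals $2n(p-2)f>0$ at $s=0$ and $-2pf<0$ at the support edge: neither endpoint vanishes, and the time-reversed Barenblatt is a strict \emph{subsolution} near the edge of its support. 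Your calibration then makes this fatal rather than curing it. With $f(t)\sim c\,(-t)^{n(p-2)/\lambda}h(t)^{\alpha(p-2)}$ one has $f>0$, $f'(t)<0$ and $(-t)f'(t)=-\frac{n(p-2)}{\lambda}f+O\bigl(f/|\log(-t)|\bigr)$, so inside $\Theta$ the bracket is
\begin{equation*}
n(p-2)(3-p)\,f(t)-2\lambda c_0\,s^{p/(p-1)}+O\bigl(f(t)/|\log(-t)|\bigr).
\end{equation*}
For $p\ge3$ the leading coefficient $n(p-2)(3-p)\le0$, so the bracket is negative on a portion of $\Theta$ accumulating at the vertex (for $p>3$ even on the axis $x=0$), and enlarging the constant $c=c_j$ only makes the dominant negative term larger. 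Hence for $p\ge3$ no choice of constants makes your $w_j$ \p-superparabolic in $\Theta$, while the proposition is claimed for every $p>2$; this is a genuine gap, not a constant-chasing issue. (The ansatz itself is not beyond repair: a pure power $f(t)=c_j(-t)^{\beta}$ works, provided $\beta>\frac{n(p-2)}{\lambda(p-1)}$ for condition \ref{cond-third}, $\beta<\frac{2n(p-2)}{\lambda(p-1)}$ for the sign of the zeroth-order term, and $\beta<\frac{n(p-2)}{\lambda}$ so that the support swallows $\Theta$ --- but this abandons precisely the ``support tracks $\partial\Theta$'' matching that you describe as forced.)

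The paper's proof is structurally different and avoids the problem from the outset. Its barriers are
\begin{equation*}
w_j(x,t)=f(t)\Bigl[j+\tfrac{p-2}{p\lambda^{1/(p-1)}}\,s^{p/(p-1)}\Bigr]^{(p-1)/(p-2)}
-j^{(p-1)/(p-2)}f(t)+\rho_j(t),
\end{equation*}
with $f(t)=-\varepsilon h(t)^{\alpha}$ \emph{negative}, so the Barenblatt-type profile enters with reversed sign, there is no positive-part truncation and no free boundary; the terms that refuse to cancel are absorbed into the additive correction $\rho_j(t)=A(-t)^{1-p/\lambda}h(t)^{\alpha(p-1)}\to0$, which at the same time yields condition \ref{cond-third} because $0<w_j<\rho_j$ on the positivity set. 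Positivity is arranged on the region \eqref{Eq:petr:2}, which is shown to contain $\Theta$ once $j$ is large, and condition \ref{cond-second-weak} is verified by an explicit lower bound for $w_j$ on the lateral boundary $|x|=y(t)$ and on $\{t=-\frac1{2e}\}$; it does not follow from monotonicity in $j$ alone. Your outlines for conditions \ref{cond-third} and \ref{cond-second-weak} are reasonable in spirit, but they rest entirely on a superparabolicity that your ansatz lacks.
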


The choice $T=-\frac1{2e}$ is completely immaterial: any negative value would do, 
as regularity is a purely local property by Proposition~\ref{prop-local}. 
In the interval $\bigl(-\frac1e,0\bigr)$, 
the function $h$ is strictly positive, 
and this simplifies some of the calculations to come.

\begin{remark}
If we choose $\al=\frac1{p-2}$, then in \eqref{Eq:petr:1} we obtain
\[
\biggl(\frac{|x|}{(-t)^{1/\lm}}\biggr)^{p/(p-1)}<K(-t)^{n(p-2)/\lm}\frac{|{\log(-t)}|^{p-2}-1}{p-2},
\]
and when $p\to2$, this \emph{formally} becomes 
\[
\biggl(\frac{|x|}{(-t)^{1/2}}\biggr)^{2}<K\log|{\log(-t)}|,
\]
which resembles Petrovski\u\i's condition mentioned in the introduction.
Unfortunately, this result is purely formal, because when $p\to2$, 
we have $M\to\infty$ and $\eps\to0$ in the proof below, 
and the argument becomes void. The lack of stability in our estimates is apparent also from another point of view: the constant $K$ is completely arbitrary here, whereas from Petrovski\u\i's condition
it is known that its value is very precisely determined.
\end{remark}

\begin{remark}
In Kilpel\"ainen--Lindqvist~\cite{KiLi96}, pp.\ 676--677,
 it is shown that the origin is an irregular boundary point with respect to the so-called Barenblatt balls; namely, it is shown that $(0,0)$ is an irregular boundary point with respect to the domain
\[
 \biggl\{(x,t):
\frac{p-2}{p\lm^{1/(p-1)}}\biggl(\frac{|x|}{(-t)^{1/\lm}}\biggr)^{p/(p-1)}
   <1-2^{-(p-2)/(p-1)} 
   \text{ and } -T <t< 0
\biggr\},
\]
where $T$ depends on $p$.
Now, provided $t$ is small enough, which we can always assume without loss of generality, as regularity is a local property by Proposition~\ref{prop-local}, it is easy to check that in $\Theta$, 
\begin{align*}
\frac{p-2}{p\lm^{1/(p-1)}}\biggl(\frac{|x|}{(-t)^{1/\lm}}\biggr)^{p/(p-1)}
&< K \frac{p-2}{p\lm^{1/(p-1)}}(-t)^{n(p-2)/\lm}h(t)^{\al(p-2)} \\
&   <1-2^{-(p-2)/(p-1)}. 
\end{align*}
This suggests that there is a sort of threshold for the regularity of the 
final point. 
Once more, there is no stability in this estimate, as $p\to2$.
\end{remark}

\begin{proof}[Proof of Proposition~\ref{Prop:Petr-deg}]
By Definition~\ref{def-barrier} and Theorem~\ref{thm:barrier-char}, 
it is enough to show that there exists a  barrier family 
$\{w_j\}_{j=1}^\infty$ in $\Theta$  
at the origin $\xi_0=(0,0)$.
The  family $\{w_j\}_{j=1}^\infty$  we construct 
will be smooth in $\Theta$, 
so that
${\partial_t w_j}-\Delta_p w_j\ge0$ is satisfied in the classical sense.
It will be constructed in the form
\begin{align}
w_j(x,t)& =f(t)\biggl[j+\frac{p-2}{p\lm^{1/(p-1)}}
     \biggl(\frac{|x|}{(-t)^{1/\lm}}\biggr)^{p/(p-1)}
\biggr]^{(p-1)/(p-2)} \nonumber \\
  &\quad -j^{(p-1)/(p-2)}f(t)+\rho_j(t),
\label{eq:petrowsky-barriers}
\end{align}
where
\[
   f(t)=-\eps h(t)^\alp 
     = -\eps\biggl(\frac{|{\log(-t)}|^{p-2}-1}{p-2}\biggr)^\alp<0,
\]
$\eps>0$ is a positive parameter to be chosen,
and $\rho_j>0$ is a proper function to be determined.
(The function $\rho_j$ will depend on $j$, but $\eps$ will not.)

We shall select $\rho_j$ such that $w_j$ is a supersolution 
in the domain where $w_j>0$,
and this domain is to contain $\Theta$. In the following, 
for simplicity we will drop the subscript  $j$ in $w_j$ and $\rho_j$.
Notice that $w$ is positive when
\begin{equation}\label{Eq:petr:2}
\biggl[j+\frac{p-2}{p\lm^{1/(p-1)}}
      \biggl(\frac{|x|}{(-t)^{1/\lm}}\biggr)^{p/(p-1)}
\biggr]^{(p-1)/(p-2)}<j^{(p-1)/(p-2)}-\frac{\rho(t)}{f(t)}.
\end{equation}
Moreover,
\[
w(x,t)<f(t)j^{(p-1)/(p-2)}-f(t)j^{(p-1)/(p-2)}+\rho(t)=\rho(t).
\]
Thus, \ref{cond-third} in Definition~\ref{def-barrier} holds true if $\rho(t)\to0$ as $t\to0^\limminus$. 
This requirement restricts the choice of $\rho$
in a decisive way. However, it turns out that $\rho$ can be chosen in such a way, 
that conditions \ref{cond-first}--\ref{cond-second-weak} in Definition~\ref{def-barrier}
 can all be satisfied. 

We now show that $w$ is \p-superparabolic in the domain defined by 
\eqref{Eq:petr:2}.
We will then prove that this domain contains $\Theta$, and at the same time 
that \ref{cond-second-weak} in Definition~\ref{def-barrier} is satisfied on $\partial\Theta$.

We set 
\begin{equation}\label{Eq:petr:3}
F(x,t)=j+\frac{p-2}{p\lm^{1/(p-1)}}
    \biggl(\frac{|x|}{(-t)^{1/\lm}}\biggr)^{p/(p-1)}.
\end{equation}
Hence
\begin{equation*}
\begin{aligned}
\nabla F &=\frac{p-2}{(p-1)\lm^{1/(p-1)}}
    \frac{|x|^{(2-p)/(p-1)}x}{(-t)^{p/\lm(p-1)}},\\
 \partial _t F &=\frac{p-2}{(p-1)\lm^{p/(p-1)}}
      \biggl(\frac{|x|}{(-t)^{1/\lm}}\biggr)^{p/(p-1)}\frac1{-t}
   =\frac{p (F(x,t)-j)}{\lm(p-1)(-t)}.
\end{aligned}
\end{equation*}
Since $w(x,t)=f(t)F(x,t)^{(p-1)/(p-2)}+\phi(t)$ with 
$\phi(t)=-j^{(p-1)/(p-2)}f(t)+\rho(t)$, we have
\begin{equation*}
\begin{aligned}
\nabla w &=f(t)\frac{F(x,t)^{1/(p-2)}}{\lm^{1/(p-1)}}
\frac{|x|^{(2-p)/(p-1)}x}{(-t)^{p/\lm(p-1)}},\\
|\nabla w|^{p-2} \nabla w
    &=|f(t)|^{p-2}f(t)\frac{F(x,t)^{(p-1)/(p-2)}}{\lm}\frac{x}{(-t)^{p/\lm}}.
\end{aligned}
\end{equation*}
Therefore,
\begin{align*}
\Delta_p w &=|f(t)|^{p-2}f(t)\frac{F(x,t)^{(p-1)/(p-2)}}{\lm}\frac{n}{(-t)^{p/\lm}}\\
                    &+|f(t)|^{p-2}f(t) \frac{F(x,t)^{1/(p-2)}}{\lm^{p/(p-1)}}
    \biggl(\frac{|x|}{(-t)^{1/\lm}}\biggr)^{p/(p-1)}\frac1{(-t)^{p/\lm}}\\
                     &= |f(t)|^{p-2}f(t)\frac{F(x,t)^{(p-1)/(p-2)}}{\lm}
         \frac{n}{(-t)^{p/\lm}}\\
   & \quad +\frac{p|f(t)|^{p-2}f(t)}{\lm (p-2)(-t)^{p/\lm}}F(x,t)^{1/(p-2)}(F(x,t)-j).
\end{align*}
Moreover,
\begin{align*}
\partial_t w &= \phi'(t)+f'(t)F(x,t)^{(p-1)/(p-2)}+\frac{p-1}{p-2}f(t)F(x,t)^{1/(p-2)}\partial_t F\\
      &= \phi'(t)+f'(t)F(x,t)^{(p-1)/(p-2)}
    +\frac{p f(t)F(x,t)^{1/(p-2)}(F(x,t)-j)}{\lm(p-2)(-t)}.
\end{align*}
Combining the previous expressions yields
\begin{align*}
\partial_t w-\Delta_p w 
&=\phi'(t)+f'(t)F(x,t)^{(p-1)/(p-2)}+\frac{p f(t)F(x,t)^{1/(p-2)}(F(x,t)-j)}{\lm(p-2)(-t)}\\
         &\quad -|f(t)|^{p-2}f(t)\frac{F(x,t)^{(p-1)/(p-2)}}{\lm}
                         \frac n{(-t)^{p/\lm}}\\
 &\quad -\frac{p|f(t)|^{p-2}f(t)}{\lm(p-2)(-t)^{p/\lm}}F(x,t)^{1/(p-2)}(F(x,t)-j)\\
        &= \phi'(t)-\frac{pj}{\lm(p-2)}
    \biggl(\frac1{-t}-\frac{|f(t)|^{p-2}}{(-t)^{p/\lm}}\biggr)f(t)F(x,t)^{1/(p-2)}\\
                           &\quad +F(x,t)^{(p-1)/(p-2)}
     \biggl(f'(t)+\frac{p f(t)}{\lm(p-2)(-t)}
    -\frac{n|f(t)|^{p-2}f(t)}{\lm(-t)^{p/\lm}}\\
     &\quad -\frac{p|f(t)|^{p-2}f(t)}{\lm(p-2)(-t)^{p/\lm}}\biggr).
\end{align*}
Since 
\[
 \frac n\lm+\frac p{\lm(p-2)}=\frac1{p-2},
\]
 we obtain
\begin{align}  \label{eq-wt-Delta-w}
\partial_t w-\Delta_p w &= \phi'(t)-\frac{pj}{\lm(p-2)}
   \biggl(\frac1{-t}-\frac{|f(t)|^{p-2}}{(-t)^{p/\lm}}\biggr)f(t)F(x,t)^{1/(p-2)}\\
      &\quad +F(x,t)^{(p-1)/(p-2)}\biggl(f'(t)+\frac{p f(t)}{\lm(p-2)(-t)}
         -\frac{|f(t)|^{p-2}f(t)}{(p-2)(-t)^{p/\lm}}\biggr). \nonumber
\end{align}
We need to ensure that $\partial_t w-\Delta_p w \ge0$.
As we mentioned before, we now choose
\begin{equation}   \label{eq-choose-f}
   f(t)=-\eps h(t)^\alp 
     = -\eps\biggl(\frac{|{\log(-t)}|^{p-2}-1}{p-2}\biggr)^\alp,
\end{equation}
where $\eps$ is still to be fixed. Since 
\[
f'(t)=-\al\eps h(t)^{\al-1}\frac{|{\log(-t)}|^{p-3}}{-t} <0,
\]
 we have
\begin{align*}
&f'(t)+\frac{p f(t)}{\lm(p-2)(-t)}-\frac{|f(t)|^{p-2}f(t)}{(p-2)(-t)^{p/\lm}}\\
&\qquad =-\al\eps h(t)^{\al-1}\frac{|{\log(-t)}|^{p-3}}{-t}
    -\frac{p\eps}{\lm(p-2)(-t)}h(t)^\al
+\frac{\eps^{p-1}}{(p-2)(-t)^{p/\lm}}h(t)^{\al(p-1)}.
\end{align*}
This expression is certainly negative if
\[
-\frac{p\eps}{\lm(p-2)(-t)}h(t)^\al
    +\frac{\eps^{p-1}}{(p-2)(-t)^{p/\lm}}h(t)^{\al(p-1)}\le0,
\]
which holds if 
\[
\frac p\lm\ge g(t)^{p-2}\eps^{p-2},
\quad \text{where }
g(t)=(-t)^{n/\lm}h(t)^\al.
\]
Letting $M=\sup_{-1/2e <t <0} g(t)$, which is finite and positive,
the negativity condition becomes
\begin{equation}\label{Eq:petr:4}
\frac p\lm=\eps^{p-2} M^{p-2}.
\end{equation}
We fix $\eps$ in this way; note that $\eps$ depends on $n$, $p$ and $\al$. 
Once $\eps$ is chosen, it is easy to check that
\[
-\frac{pj}{\lm(p-2)}\biggl(\frac1{-t}-\frac{|f(t)|^{p-2}}{(-t)^{p/\lm}}\biggr)f(t)>0,
\]
as $p/\lm<1$. 
Relying on \eqref{Eq:petr:3} and \eqref{eq-wt-Delta-w},
we have in the domain defined by \eqref{Eq:petr:2}, 
\begin{align*}
\partial_t w-\Delta_p w 
&\ge \phi'(t)-\frac{pj}{\lm(p-2)}\biggl(\frac1{-t}-\frac{|f(t)|^{p-2}}
    {(-t)^{p/\lm}}\biggr)f(t) j^{1/(p-2)}\\
           &\quad +\biggl(f'(t)+\frac{p f(t)}{\lm(p-2)(-t)}
        -\frac{|f(t)|^{p-2}f(t)}{(p-2)(-t)^{p/\lm}}\biggr)
         \biggl(j^{(p-1)/(p-2)}-\frac{\rho(t)}{f(t)}\biggr)\\
             &= \rho'(t)
    +\frac{n \eps^{p-1}j^{(p-1)/(p-2)}}{\lm(-t)^{p/\lm}}h(t)^{\al(p-1)}\\
      &\quad +\rho(t)\biggl(\frac{-\al|{\log(-t)}|^{p-3}}{-t h(t)}
   -\frac{p}{\lm(p-2)(-t)}
                  +\frac{\eps^{p-2}h(t)^{\al(p-2)} }{(p-2)(-t)^{p/\lm}}\biggr),
\end{align*}
where we have taken into account that $\phi(t)=\rho(t)-j^{(p-1)/(p-2)}f(t)$. 
Now we choose $\rho$ in such a way that the expression on the right-hand side is positive. We let
\begin{equation}  \label{eq-def-rho}
\rho(t)=A(-t)^{1-p/\lm}h(t)^{\al(p-1)}=A(-t)^{1-p/\lm}\efoft^{\al(p-1)},
\end{equation}
where $A>0$ is to be determined. 
As $p<\lambda$, $\lim_{t\to 0^\limminus} \rho(t)=0$ as required.
Since
\begin{align*}
\rho'(t) &= -A\frac{n(p-2)}{\lm}(-t)^{- p/\lm}h(t)^{\al(p-1)}\\
    &\quad +A\al(p-1)(-t)^{1-p/\lm}h(t)^{\al(p-1)-1}\frac{|{\log(-t)}|^{p-3}}{-t},
\end{align*}
we obtain
\begin{align*}
\partial_t w-\Delta_p w
&\ge \frac{1}{\lm(-t)^{p/\lm}}\biggl(-An(p-2)+n\eps^{p-1}j^{(p-1)/(p-2)}
       -\frac{Ap}{(p-2)}\biggr) h(t)^{\al(p-1)}\\
       &\quad+\frac{A\al(p-2)}{(-t)^{p/\lm}}h(t)^{\al(p-1)-1}|{\log(-t)}|^{p-3}
       +\frac{A\eps^{p-2}(-t)^{1-2p/\lm}}{p-2}h(t)^{\al(2p-3)}.
\end{align*}
We conclude that $\partial_t w-\Delta_p w\ge0$, provided we choose $A>0$ such that
\[
-A\biggl(n(p-2)+\frac p{p-2}\biggr)+n\eps^{p-1}j^{(p-1)/(p-2)}\ge0,
\]
which holds when we let
\begin{equation}\label{Eq:petr:5}
A=\frac{n(p-2)\eps^{p-1}j^{(p-1)/(p-2)}}{n(p-2)^2+p}.
\end{equation}
Relying on the choice of $\eps$, $A$, $f$ and $\rho$ in
\eqref{eq-choose-f}--\eqref{Eq:petr:5}, we can now rewrite \eqref{Eq:petr:2} in the following way
\begin{equation}\label{Eq:petr:6}
\begin{aligned}
&\biggl[1+\frac{p-2}{pj\lm^{1/(p-1)}}\biggl(\frac{|x|}{(-t)^{1/\lm}}
   \biggr)^{p/(p-1)}\biggr]^{(p-1)/(p-2)}\\
&<1+\frac{np(p-2)}{[n(p-2)^2+p]\lm M^{p-2}}(-t)^{n(p-2)/\lm} h(t)^{\al(p-2)}.
\end{aligned}
\end{equation}
Notice that $j$ is still completely undetermined. In order to prove \ref{cond-first} in Definition~\ref{def-barrier}, we need to show that the domain defined by \eqref{Eq:petr:6} contains $\Theta$. Indeed, by \eqref{Eq:petr:1}, 
we have 
\[
\biggl(\frac{|x|}{(-t)^{1/\lm}}\biggr)^{p/(p-1)}< KM^{p-2}
    \quad \text{in } \Theta.
\]
Taking into account that 
$ (1+s)^{(p-1)/(p-2)}\le1+\frac{p-1}{p-2}s(1+s)^{1/(p-2)}$ for $s \ge 0$
yields
\begin{align*}
&\biggl[1+\frac{p-2}{pj\lm^{1/(p-1)}}\biggl(\frac{|x|}{(-t)^{1/\lm}}
   \biggr)^{p/(p-1)}\biggr]^{(p-1)/(p-2)}\\
&\kern 5em \le 1+\frac{p-1}{pj\lm^{1/(p-1)}}\biggl(\frac{|x|}{(-t)^{1/\lm}}
   \biggr)^{p/(p-1)}
\biggl(1+\frac{p-2}{pj\lm^{1/(p-1)}}KM^{p-2}\biggr)^{1/(p-2)}\\
&\kern 5em \le 1+\frac{L}{j}\biggl(\frac{|x|}{(-t)^{1/\lm}}\biggr)^{p/(p-1)}\\
&\kern 5em <1+\frac Lj K(-t)^{n(p-2)/\lm}h(t)^{\al(p-2)},
\end{align*}
where $L$ is independent of $j  \ge 1$.
Since both $K$ and $M$ are independent of $j$, provided we choose $j$ large enough, we can always conclude that
\[
\frac Lj K<\frac{np(p-2)}{[n(p-2)^2+p]\lm M^{p-2}},
\]
therefore satisfying \eqref{Eq:petr:6}. 

Finally, we show that \ref{cond-second-weak}
in Definition~\ref{def-barrier} is satisfied. 
Let $y(t)$, $-1/2e <t<0$,  be the solution to
\[ 
  \biggl(\frac{y}{(-t)^{1/\lambda}}\biggr)^{p/(p-1)}
   =K(-t)^{n(p-2)/\lambda}h(t)^{\al(p-2)}.
\]
Then for $(x,t) \in \bdy \Theta$ with $|x|=y(t)$, we have 
\begin{align*}
w(x,t)=&-\eps h(t)^{\al}
\biggl(j+\frac{p-2}{p\lm^{1/(p-1)}}K(-t)^{n(p-2)/\lm}
h(t)^{\al(p-2)}\biggr)^{(p-1)/(p-2)}\\
&+\eps j^{(p-1)/(p-2)}h(t)^\al+A(-t)^{n(p-2)/\lm}h(t)^{\al(p-1)}\\
=&-\eps j^{(p-1)/(p-2)}h(t)^{\al}
\biggl(1+\frac{K(p-2)}{pj\lm^{1/(p-1)}}(-t)^{n(p-2)/\lm}
h(t)^{\al(p-2)}\biggr)^{(p-1)/(p-2)}\\
&+\eps j^{(p-1)/(p-2)}h(t)^\al+A(-t)^{n(p-2)/\lm}h(t)^{\al(p-1)}.
\end{align*}
Using that $(1+s)^{(p-1)/(p-2)}\le 1 +B s$ for $0\le s \le \ga$ and
that $B$ only depends on the bound $\ga$,
we have with $\Kt=KB$,
\begin{align*}
w(x,t)\ge&-\eps j^{(p-1)/(p-2)}h(t)^{\al}
\biggl(1+\frac{\Kt(p-2)}{pj\lm^{1/(p-1)}}(-t)^{n(p-2)/\lm}
h(t)^{\al(p-2)}\biggr)\\
&+\eps j^{(p-1)/(p-2)}h(t)^\al+A(-t)^{n(p-2)/\lm}h(t)^{\al(p-1)}\\
=&-\eps j^{(p-1)/(p-2)}\frac{\Kt(p-2)}{pj\lm^{1/(p-1)}}(-t)^{n(p-2)/\lm}h(t)^{\al(p-1)}\\
&+A(-t)^{n(p-2)/\lm}h(t)^{\al(p-1)}\\
=&\eps j^{(p-1)/(p-2)}\biggl(\frac{n(p-2)\eps^{p-2}}{n(p-2)^2+p}
  -\frac{\Kt(p-2)}{pj\lm^{1/(p-1)}}\biggr)(-t)^{n(p-2)/\lm}h(t)^{\al(p-1)}.
\end{align*}
For $(x,t) \in \bdy \Theta$ with $t=-\frac{1}{2e}$ and 
$|x|<y(t)$, we instead have 
\[
    w(x,t) \ge w(x',t),
\]
where $|x'|=y(t)$, since $f$ is a negative function.
Together with the last estimate, this shows that
$\{w_j\}_{j=k}^\infty$ is a  barrier family,
provided $k$ is large enough.
\end{proof}

%%%%%%%%%%%%%%%%%%%%%%%%%%%%%%%%%%%%%%%%%%%%%%%
\section{Remarks on the regularity of a final point for 
\texorpdfstring{$1<p<2$}{}}\label{S:final}

Due to the analogies in the definition of the Barenblatt fundamental solution for
$p>2$ and $\frac{2n}{n+1}<p<2$, one would expect that in the singular 
supercritical
range $\frac{2n}{n+1}<p<2$
the origin $(0,0)$ is regular with respect to the domain
\begin{equation}\label{Eq:petr:1sing}
\begin{aligned}
\Theta=&\biggl\{(x,t)\in\Rno: -\frac1{2e}<t<0 \text{ and}\\ 
&\biggl(\frac{|x|}{(-t)^{1/\lambda}}\biggr)^{p/(p-1)}
   <K(-t)^{n(2-p)/\lambda}\efoftsing^{\al(2-p)}\biggr\},
\end{aligned}
\end{equation}
where $K$ and $\alpha$ denote arbitrary positive constants, and $\lm=n(p-2)+p$. 
Notice that the above-indicated range of $p$ ensures that $\lm>0$.

With methods that are very similar to the ones used in the previous section, it is not hard
to prove that for proper values of $\eps$ and for sufficiently large values of $j$, the function
\begin{align*}
w=&-\eps\efoftsing^\al\biggl[j-\frac{2-p}{p\lm^{1/(p-1)}}
     \biggl(\frac{|x|}{(-t)^{1/\lm}}\biggr)^{p/(p-1)}\biggr]^{-(p-1)/(2-p)}\\
     &+\eps j^{-(p-1)/(2-p)}\efoftsing^\al\\
     &+\frac{n(2-p)\eps j^{-(p-1)/(2-p)}}{\lm M^{2-p}}
     (-t)^{n(2-p)/\lm}\efoftsing^{\al(3-p)}
\end{align*}
is \emph{one} barrier. 
Unfortunately, unlike the $p>2$ case, now as $j\to\infty$, $w\to0$ and 
condition~\ref{cond-second-weak} of Definition~\ref{def-barrier} cannot be satisfied. 
Therefore, we do not have a whole \emph{family} of barriers, and we cannot conclude 
regularity.

What we can prove is a somewhat weaker result, valid in the whole singular 
range $1<p<2$. 

\begin{prop}\label{Prop:8:1}
Let $1<p<2$. The origin $(0,0)$ is regular with respect to the domain
\begin{equation}\label{Eq:petr:1:sing}
\Theta=\{(x,t)\in\Rno: -1 < t<0 \text{ and }  |x|^l<K(-t)\}
\end{equation}
if $0 <l < p$  and $K>1$ is arbitrary. 
\end{prop}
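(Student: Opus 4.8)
The plan is to construct an explicit barrier family at $\xi_0=(0,0)$ and invoke Theorem~\ref{thm:barrier-char}. Since $\Theta\subset\{t<0\}$, the origin is a \emph{final} point and Theorem~\ref{thm-Omminus} gives no reduction, so the barriers must be produced by hand. The key idea is to reuse the \p-parabolic function of Lemma~\ref{lm-Omnibus} \emph{with the opposite sign}: as noted after \eqref{eq:para}, $u\mapsto-u$ preserves solutions, so
\[
   g_j(x,t):=-f_j(x,t)=nj^{p-1}(-t)-j\frac{p-1}{p}|x|^{p/(p-1)}
\]
is again \p-parabolic in $\Rno$, vanishes at the origin, and --- crucially --- is \emph{positive} near $\xi_0$ inside the downward cusp, because there the term $nj^{p-1}(-t)$ dominates $|x|^{p/(p-1)}$. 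This is the analogue of the north-pole barrier of Proposition~\ref{prop:northpole-p>2}, now adapted to a point approached from below in time.

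Following the pattern of Proposition~\ref{prop:northpole-p>2}, I would localise to the truncated cusp $G^j=\Theta\cap\{t>-\tau_j\}$, set $m_j:=\inf_{\{t=-\tau_j\}\cap\Theta}g_j$, and paste
\[
  h_j=\begin{cases}\min\{g_j,m_j\}&\text{in }G^j,\\ m_j&\text{in }\Theta\setm G^j.\end{cases}
\]
On the interface $\{t=-\tau_j\}\cap\Theta$ one has $g_j\ge m_j$ by definition of the infimum, so $h_j$ is continuous; the pasting lemma (Lemma~\ref{lem-pasting}), applied to the constant (hence \p-super\-parabolic) function $m_j$ and to $g_j|_{G^j}$, then makes $h_j$ a positive continuous \p-super\-parabolic function on $\Theta$. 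Since $h_j=g_j\to0$ as $\zeta\to\xi_0$, condition~\ref{cond-third} of Definition~\ref{def-barrier} holds, and as $G^j$ shrinks to $\xi_0$ while $m_j\to\infty$, every $\xi\in\bdy\Theta$ with $|\xi-\xi_0|\ge1/k$ eventually has $h_j\equiv m_j$ in a neighbourhood within $\Theta$, so $\liminf_{\zeta\to\xi}h_j\ge m_j\ge k$, which is~\ref{cond-second-weak}.

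The two quantitative points to verify are that $g_j>0$ throughout $G^j$ and that $m_j\to\infty$, and this is exactly where $l<p$ enters. Writing $\tau_j=c_*j^{-\kappa}$ and using that on $\Theta$ one has $|x|^{p/(p-1)}<(K(-t))^{q}$ with $q=p/(l(p-1))>1$, both requirements come down to balancing the exponent $p-1$ of the $(-t)$-term against the exponent $1$ of the $|x|^{p/(p-1)}$-term, which scales like $(-t)^{q}$ across the cusp. Positivity of $g_j$ in $G^j$ forces $\kappa\ge(2-p)/(q-1)$, while $m_j=nj^{p-1}\tau_j-\frac{p-1}{p}j(K\tau_j)^{q}\to\infty$ forces $\kappa<p-1$ together with the same lower bound. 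The admissible window $(2-p)/(q-1)\le\kappa<p-1$ is nonempty \emph{precisely} when $l<p$: a short computation reduces the nonemptiness to $l\cdot[(2-p)+(p-1)]<p$, i.e.\ $l<p$, while the value of $K>1$ only affects the constants ($c_*$) and not the exponent window. The main obstacle is therefore not conceptual but this exponent bookkeeping --- arranging a \emph{single} scaling $\tau_j\sim j^{-\kappa}$ that simultaneously keeps $g_j$ positive on the whole truncated cusp and drives $m_j$ to infinity. Once $\kappa$ is fixed in the admissible window, $\{h_j\}$ is a barrier family and Theorem~\ref{thm:barrier-char} gives the regularity of $\xi_0$.
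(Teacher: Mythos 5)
Your proposal is correct, but it takes a genuinely different route from the paper's own proof. The paper builds its barriers from the separable strict supersolution
\[
u_j(x,t)=j^{\al}\Bigl(-\frac tj\Bigr)^{1/(2-p)}\Bigl(\frac{2-p}{j}-|x|^2\Bigr),
\]
whose time factor $(-t)^{1/(2-p)}$ is the extinction profile typical of the singular range; it localises \emph{in space}, on the thin cylinders $G^j=\bigl\{|x|<\tfrac12\sqrt{(2-p)/j}\bigr\}\times(-\infty,0)$, and the free parameter is the exponent $\al$, which must lie in the window $1+(1+\tfrac12 l)/(2-p)<\al<2/(2-p)+\tfrac12$, nonempty exactly when $l<p$. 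You instead take the exact \p-parabolic function $g_j=-f_j$ from Lemma~\ref{lm-Omnibus}, localise \emph{in time} at $t=-\tau_j$ with $\tau_j\sim j^{-\kappa}$, and your free parameter is the truncation rate $\kappa$, with window $(2-p)/(q-1)\le\kappa<p-1$, $q=p/(l(p-1))$, again nonempty exactly when $l<p$ (choosing $c_*$ small if $\kappa$ sits at the left endpoint). After that the two arguments merge: paste with the constant $m_j$ via Lemma~\ref{lem-pasting}, check $m_j\to\infty$ while the localised region shrinks to the origin, and invoke Theorem~\ref{thm:barrier-char}; in both proofs the hypothesis $l<p$ is precisely what makes the respective exponent window nonempty, and I have checked that your window computation (positivity of $g_j$ on the truncated cusp versus divergence of $m_j$) is right. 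What your version buys: the barrier is an exact solution, so no differential inequality needs verifying --- only exponent bookkeeping --- and it makes transparent that the same mechanism as in the north-pole Proposition~\ref{prop:northpole-p>2} handles the final point; note that singularity enters your argument only through $q>1$, i.e.\ $l<p<p/(p-1)$, which is where $1<p<2$ is actually used. What the paper's version buys: a barrier adapted to the singular structure of the equation, of the same form as the single barrier it exhibits for the sharper Petrovski\u\i-type domain \eqref{Eq:petr:1sing}, so the two results of Section~\ref{S:final} are proved with one family of functions.
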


Besides being nonoptimal, the result of Proposition~\ref{Prop:8:1} is also not stable, and it cannot be: Indeed, the barrier $u_j$ below is defined only for $1<p<2$ and for $p=2$ the regularity of the origin as an end point is characterised by the original 
Petrovski\u\i\ criterion.

\begin{proof}
Let 
\[
G^j=\biggl\{x:|x|<\frac12\sqrt{\frac{2-p}j}\biggr\}\times(-\infty,0),
\]
and consider the function $u_j:G^j\to\R$ defined by
\[
u_j(x,t)=j^\al\biggl(-\frac tj\biggr)^{1/(2-p)}\biggl(\frac{2-p}j-|x|^2\biggr),
\]
where $\al$ is a positive parameter to be determined and $j>1$ is arbitrary.
First of all, notice that
\[
\lim_{G^j\ni(x,t)\to(0,0)}u_j(x,t)=0.
\]
Next, we want to show that $u_j$ is \p-superparabolic in $G^j$, provided $\al$ is properly chosen.
We have
\begin{align*}
{\partial_t u_j} &=-\frac{j^{\al-1}}{2-p}\biggl(-\frac tj\biggr)^{(p-1)/(2-p)}
     \biggl(\frac{2-p}j-|x|^2\biggr),\\
\Delta_p u_j &=-2^{p-1}j^{\al(p-1)}\biggl(-\frac tj\biggr)^{(p-1)/(2-p)}(n+p-2)|x|^{p-2}.
\end{align*}
Therefore in $G^j$,
\begin{align*}
& \kern -2em {\partial_t u_j}-\Delta_p u_j \\
&=\biggl(-\frac tj\biggr)^{(p-1)/(2-p)}
    \biggl[2^{p-1}j^{\al(p-1)}(n+p-2)|x|^{p-2}-\frac{j^{\al-1}}{2-p}
         \biggl(\frac{2-p}j-|x|^2\biggr)\biggr] \kern -2em\\
&\ge\biggl(-\frac tj\biggr)^{(p-1)/(2-p)}
   \biggl[2^{p-1}j^{\al(p-1)}(n-1)\biggl(\frac1{|x|}\biggr)^{2-p}-j^{\al-2}\biggr]\\
 &\ge\biggl(-\frac tj\biggr)^{(p-1)/(2-p)}
   \biggl[\frac2{(2-p)^{1-p/2}}(n-1)j^{\al(p-1)+1-p/2}-j^{\al-2}\biggr]\\
&\ge\biggl(-\frac tj\biggr)^{(p-1)/(2-p)}(j^{\al(p-1)+1-p/2}-j^{\al-2}),
\end{align*}
and, since $j>1$, we have shown that $u_j$ is \p-superparabolic if $\al>0$ 
is such that 
\[
\al(p-1)+\frac{2-p}2>\al-2,\quad\text{or equivalently}\quad
\al<\frac2{2-p}+\frac12.
\]

Finally, we show that condition~\ref{cond-second-weak}
 in Definition~\ref{def-barrier} is satisfied, provided $\al$ is properly chosen. 
Let 
\[
    m_j =\inf_{(x,t) \in \Theta \cap \bdy G^j} u_j(x,t) >0.
\]
Since for any $(x,t)\in G^j$ we have
\[
u_j(x,t)\ge\frac{3(2-p)}4j^{\al-1}\biggl(-\frac tj\biggr)^{1/(2-p)}
          =\frac{3(2-p)}4 j^{\al-1-1/(2-p)}(-t)^{1/(2-p)},
\]
we see that  $m_j=u_j(x,t)$ if $\abs{x}=\frac12 \sqrt{(2-p)/j}$ 
and  $t=-|x|^l/K$, i.e.
\[
    m_j = \frac{3(2-p)}4 
      \biggl( \frac{\bigl(\tfrac{1}{2}\sqrt{2-p}\bigr)^l}{K}\biggr)^{1/(2-p)}
           j^{\al-1-(1+l/2)/(2-p)}.
\]
Since 
also $u_j(x,t) \ge m_j$ when $(x,t) \in \bdy (\Theta \cap G^j)$ and $t=-1$, 
it follows from the pasting lemma (Lemma~\ref{lem-pasting}) that
\[
   w_j=\begin{cases}
            \min\{u_j,m_j\} & \text{in } G^j \cap \Theta, \\
            m_j & \text{in } \Theta \setm G^j,
            \end{cases}
\]
is a barrier family, 
provided we choose 
$\alp$ so that 
\[
1+\frac{1+ \tfrac{1}{2}l}{2-p}<\al<\frac2{2-p}+\frac12,
\]
which is possible as  $0<l<p$.
\end{proof}

%%%%%%%%%%%%%%%%%%%%%%%%%%%%%%%%%%%%%%%
\section{Open problems}\label{S:Open}
Here we collect a short list of open problems.
\begin{enumerate}
\renewcommand{\theenumi}{\textup{\arabic{enumi}.}}%
\item In Theorem~\ref{thm:barrier-char} we characterise the regularity of a 
boundary point in terms of the existence of a \emph{family} of barriers. 
Is it really necessary to have a family of barriers? 
Or, stated otherwise,
does there exist an open set $\Theta$ and an irregular boundary
point $\xi_0 \in \bdy \Theta$ such that there is \emph{one} barrier at $\xi_0$?
We think that this 
question has a positive answer, but unfortunately, 
we could not find such an example. 
\item The previous question is directly linked
to the stability as $p\to2$ of the results given in 
Sections~\ref{S:petr} and~\ref{S:final}: if a family is indeed 
necessary, then, in order to 
prove stability, one should be able to find a family of barriers, which in the limit for $p=2$ converges
to a set of barriers, all multiples of a \emph{minimal} one; this has to be the case, 
because 
a \emph{single} barrier suffices when $p=2$.
\item When $p>2$, can we stabilise the estimates of Proposition~\ref{Prop:Petr-deg} and recover the classical Petrovski\u\i\ condition?
\item In the range $1<p<2$ (or at least $\frac{2n}{n+2}<p<2$) can we improve the result of Proposition~\ref{Prop:8:1} and build a family of barriers, which in the limit as $p\to2^\limminus$ yields the
original Petrovski\u\i\ condition? 
\item The problem of the north pole for the exterior ball
condition remains open,
when $n>1$. 
\item As we mentioned in Section~\ref{sect-gen-cone}, it would be interesting to see, 
whether the so-called
tusk condition holds for a general $p>1$, and not just for $p=2$. 
\item 
A related interesting problem is the \emph{resolutivity problem}:
If $f \in C(\Theta)$ is then always $\uP f = \lP f$?

\end{enumerate}
%%%%%%%%%%%%%%%%%%%%%%%%%%%%%%%%%%%%%%%

\end{document}